\pgfplotsset{compat=newest}
\newtheorem{theorem}{Theorem}[section]
\newtheorem{lemma}[theorem]{Lemma}
\newtheorem{definition}[theorem]{Definition}
\newtheorem{proposition}[theorem]{Proposition}
\newtheorem{conjecture}[theorem]{Conjecture}
\newtheorem{example}[theorem]{Example}
\newtheorem{remark}[theorem]{Remark}
\theoremstyle{definition}
\newenvironment{psmallmatrix}
{\left(\begin{smallmatrix}}
	{\end{smallmatrix}\right)}
\numberwithin{equation}{section}
\renewcommand{\@makefnmark}{\mbox{\textsuperscript{}}}
\newcommand{\Proj}{\operatorname{Proj}} 	
\newcommand{\GL}{\mathrm{GL}} 	
\newcommand{\SL}{\mathrm{SL}} 	
\newcommand{\Isom}{\mathrm{Isom}} 	
\newcommand{\Aut}{\mathrm{Aut}} 	
\newcommand{\End}{\mathrm{End}} 	
\newcommand{\Gal}{\mathrm{Gal}} 	
\newcommand{\Spec}{\operatorname{Spec}} 	
\newcommand{\cC}{\mathcal{C}}		
\newcommand{\cH}{\mathcal{H}}		
\newcommand{\cM}{\mathcal{M}}		
\newcommand{\cN}{\mathcal{N}}		
\newcommand{\cO}{\mathcal{O}}		
\newcommand{\cU}{\mathcal{U}}		
\newcommand{\sF}{\mathscr{F}}		
\newcommand{\sL}{\mathscr{L}}		
\newcommand{\sT}{\mathscr{T}}		
\newcommand{\sX}{\mathscr{X}}		
\newcommand{\sY}{\mathscr{Y}}		
\newcommand{\bbC}{\mathbb{C}}		
\newcommand{\bbF}{\mathbb{F}}		
\newcommand{\bbG}{\mathbb{G}}		
\newcommand{\bbK}{\mathbb{K}}		
\newcommand{\bbP}{\mathbb{P}}		
\newcommand{\bbQ}{\mathbb{Q}}		
\newcommand{\bbR}{\mathbb{R}}		
\newcommand{\bbZ}{\mathbb{Z}}		
\newcommand{\Mod}[1]{\ (\mathrm{mod}\ #1)}
\newcommand{\ignore}[1]{}
\begin{document}
	
\title{The Geometry of Drinfeld Modular Forms}
\author{Jesse Franklin}
\email{\href{mailto:jfranklin1185@gmail.com}{jfranklin1185@gmail.com}}
	
	\begin{abstract}
			We give a geometric perspective on the algebra of Drinfeld modular forms for congruence subgroups $\Gamma\leq \GL_2(\bbF_q[T]).$ In particular, we describe an isomorphism between the section ring of a line bundle on the stacky modular curve for $\Gamma_2$ and the algebra of Drinfeld modular forms for $\Gamma_2,$ where $\Gamma_2$ is the subgroup of square-determinant matrices in $\Gamma.$ This allows one to compute the latter ring by geometric invariants using the techniques of Voight, Zureick-Brown and O'Dorney. We also show how to decompose the algebra of modular forms for $\Gamma_2$ into a direct sum of two algebras of modular forms for $\Gamma$ and generalize this result to a larger class of congruence subgroups. 
		\end{abstract}
		\maketitle
		\tableofcontents
		
		\section{Introduction}
		
		\subsection{History and Motivation}
		The theory of modular forms in the classical number-field case has existed since the $1800$'s. It is well-understood that modular forms are sections of a particular line bundle on some stacky modular curve. In this set up the geometry of the stacks, with tools such as the Riemann-Roch theorem for stacky curves for example, can be used to compute section rings which describe algebras of modular forms. The program of \cite{VZB} for computing the canonical ring of log stacky curves in all genera even gives minimal presentations for many such section rings, that is: explicit generators and relations, which correspond to generators and relations for algebras of modular forms.\\
		
		In his $1986$ monograph \cite[Page $\mathrm{XIII}$]{Gekeler-Curves} asks for a description of algebras of Drinfeld modular forms in terms of generators and relations. The main results of this note describe the geometry of those modular forms, which allows one to employ techniques such as those in \cite{VZB} to find the desired generators and relations by considering the geometry of the corresponding Drinfeld modular curve. That is, we provide a means to address Gekeler's problem via geometric invariants.\\
		
		Until now, the closest analogies in the Drinfeld setting for the isomorphism $f\mapsto f(z)(dz)^{\otimes k/2}$ between modular forms of weight $k$ and sections of a line bundle on the modular curve from the classical setting are \cite[Proposition $5.6$]{Bockle-EichlerShimura} and \cite[Theorem $5.4$]{Gekeler-Curves} for rank $2$ Drinfeld modules, and \cite[Definition $(10.1)$]{Basson-Breuer-Pink-part2} for the more difficult case of Drinfeld modules of general rank. In \cite[Lemma $(10.7)$]{Basson-Breuer-Pink-part2} there is also an isomorphism between a ring of modular forms and a section ring of form $f(z)\mapsto f(z)dz^{\otimes k/2}.$ Note that a manuscript version of the three preprints by Basson-Breuer-Pink on Drinfeld modular forms of arbitrary rank is due to appear in Memoirs of the AMS, so our citation here will soon not be the most recent version of their theory.\\ 
		
		There is a collection of results which is similar to our work in comparing modular forms for various congruence subgroups to each other as in our second main result Theorem \ref{thm: decomp of mod forms}. Pink finds isomorphisms between algebras of Drinfeld modular forms for open compact subgroups $K\leq \GL_r(\widehat{\bbF_q[T]}),$ where the hat symbol denotes the pro-finite completion $\widehat{\bbF_q[T]}=\prod_{\mathfrak{p}} (\bbF_q[T])_{\mathfrak{p}},$ and normal subgroups $K'\lhd K$ in e.g.\ \cite[Proposition $5.5$]{Pink-compactification-Drinfeld-modular-varieties-2012}. Pink also describes Drinfeld modular forms as sections of an invertible sheaf in \cite[Section $5$]{Pink-compactification-Drinfeld-modular-varieties-2012} which is similar to Theorem \ref{thm: forms to differentials}. However, Pink needs the dual of the relative Lie algebra over a line bundle, rather than the bundle itself, to describe Drinfeld modular forms, which is a major difference between our work. Pink also deals with Drinfeld modules of arbitrary rank while we focus on rank $2$ only, which explains this difference in machinery.\\
		
		There are also some existing results which approach Gekeler's problem, such as Cornelissen's papers \cite{Cornelissen-lvlT} and \cite{Cornelissen-wt1} which deal with linear level moduli spaces (\cite[Theorem $(3.3)$]{Cornelissen-wt1}), i.e.\ the algebra of modular forms for $\Gamma(\alpha T+\beta),$ and include some results for quadratic level (\cite[Proposition $(3.4)$]{Cornelissen-wt1}). 
		Another example, \cite[Theorem $(4.4)$]{Dalal-Kumar-Gamma_0(T)-structure}, computes the algebra of Drinfeld modular forms for $\Gamma_0(T).$ There is also a well-known correspondence between $M^2_{2,1}(\Gamma(N))$ and holomorphic differentials on $X_0(N)$ found in \cite[Section $2.10$]{Gekeler-jacobians}.\\
		
		Several ideas in \cite{Breuer-Gekeler-h-function} are central to our argument, as well as being an exposition on aspects of Gekeler's problem in general. In particular, \cite{Breuer-Gekeler-h-function} introduces the subgroup $\Gamma_2$ of a given congruence subgroup $\Gamma\leq\GL_2(A)$ and gives a moduli interpretation of the corresponding Drinfeld modular curve. Even by the date of these most recent papers, the generalization to the algebra of modular forms for $\Gamma_0(N)$ for any level $N,$ all subgroups $\Gamma_1(N),$ high level (i.e.\ $\deg(N)\geq 2$) $\Gamma(N)$ examples and congruence subgroups of $\SL_2(\bbF_q[T])$ seem to be wide open.\\
		
		Our work differs considerably from the papers from Basson, B\"ockle, Breuer, Cornelissen, Dalal-Kumar, Gekeler, Pink and Reversat cited above in that we work with Drinfeld moduli stacks as opposed to schemes. As early as \cite{Gekeler-Curves} and \cite{Laumon-cohomology-Drinfeld-modular-varieties} it was known that moduli of Drinfeld modules of fixed rank are Deligne-Mumford stacks, but it is the more recent results of \cite{VZB} for computing log canonical rings of stacky curves, and \cite{Porta-Yu-Higher-analytic-stacks-GAGA} which provides a crucial principle of rigid analytic GAGA (short for ``g\'eom\'etrie alg\'ebrique et g\'eom\'etrie analytique'') for stacks, that makes our work possible.\\
		
		There is some historical reason for our choice to work with rigid analytic spaces as opposed to the more general adic or Berkovich spaces, namely the original analytic theory of the Drinfeld setting was developed in that language in e.g.\ Goss's paper \cite{Goss-upper-halfplane}. Though there is for example a more general or modern theory of adic stacks (see e.g.\ \cite{Warner-thesis-adic-moduli-spaces}) we will find it more convenient to phrase things in terms of rigid analytic spaces.
		
		\subsection{Main Results}
		
		This article describes the geometry of Drinfeld modular forms: we associate to each Drinfeld modular form a section of a particular line bundle on a specified stacky modular curve.
		We also give a decomposition of the algebra of modular forms, which allows one to compute all of the section rings in the papers above by means of the geometric techniques of \cite{VZB}. This means we have Gekeler's elementary interpretation of the generating modular forms in terms of Drinfeld modules viewed as points of the moduli space. We make no restrictions on the level, and when we insist that our congruence subgroup in question contains diagonal matrices this is only to simplify proofs. So, we demonstrate a new way to compute algebras of modular forms in great generality, and in a way which makes the problem reliant only on the geometry of the modular curve.\\  
		
		Our technique relies on the following three theorems which we quickly set up before stating.\\
		
		Let $\Gamma$ be a congruence subgroup of $\GL_2(\bbF_q[T]$ and suppose that
		$\displaystyle{\det(\Gamma)\overset{def}{=}\{\det(\gamma):\gamma\in \Gamma\}=(\bbF_q^{\times})^2}.$ First, we show that the Drinfeld modular forms for such $\Gamma$ are sections of a log canonical bundle on the associated stacky Drinfeld modular curve $\sX_{\Gamma}.$ This solves Gekeler's problem for groups satisfying our hypotheses, assuming one can compute the generators and relations of the log canonical ring of the stacky curve. Recall that under the assumption that $q$ is odd, we know that $k/2$ is an integer when $M_{k,l}(\Gamma)\neq 0,$ i.e.\ when we have non-zero modular forms of weight $k$ and type $l.$ 
		\begin{theorem}[Theorem \ref{thm: forms to differentials} in the text]
			Let $q$ be an odd prime and let $\Gamma\leq \GL_2(\bbF_q[T])$ be a congruence subgroup of $\GL_2(\bbF_q[T])$ such that $\det(\Gamma)=(\bbF_q^{\times})^2.$ 
			Let $\Delta$ be the divisor supported at the cusps of the stacky modular curve $\sX_{\Gamma}$ with the rigid analytic coarse space $X_{\Gamma}^{\text{an}}=\Gamma\setminus(\Omega\cup \bbP^1(\bbF_q(T))).$ 
			There is an isomorphism of graded rings \[M(\Gamma)\cong R(\sX_{\Gamma},\Omega^1_{\sX_{\Gamma}}(2\Delta)),\] where $\Omega^1_{\sX_{\Gamma}}$ is the sheaf of differentials on $\sX_{\Gamma}.$ The isomorphism of algebras is given by the isomorphisms of components $M_{k,l}(\Gamma)\to H^0(\sX_{\Gamma},\Omega^1_{\sX_{\Gamma}}(2\Delta)^{\otimes k/2})$ given by $f\mapsto f(z)(dz)^{\otimes k/2}$ for each $k\geq 2$ an even integer.  
		\end{theorem}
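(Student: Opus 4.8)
The plan is to build the map one graded piece at a time via $f\mapsto f(z)(dz)^{\otimes k/2}$, check that it lands in the global sections of the stated bundle, verify it is a homomorphism of graded rings, and then check bijectivity in each degree. I would begin with the transformation law. For $\gamma=\begin{psmallmatrix} a & b \\ c & d\end{psmallmatrix}\in\Gamma$ acting on $\Omega$ by $z\mapsto\gamma z=(az+b)/(cz+d)$, a direct computation gives $d(\gamma z)=\det(\gamma)(cz+d)^{-2}\,dz$, hence $(d(\gamma z))^{\otimes k/2}=\det(\gamma)^{k/2}(cz+d)^{-k}(dz)^{\otimes k/2}$. Combined with the weight-$k$, type-$l$ functional equation $f(\gamma z)=\det(\gamma)^{-l}(cz+d)^{k}f(z)$, the tensor $f(z)(dz)^{\otimes k/2}$ is $\Gamma$-invariant exactly when $\det(\gamma)^{k/2-l}=1$ for all $\gamma\in\Gamma$. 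This is precisely where the hypothesis $\det(\Gamma)=(\bbF_q^\times)^2$ enters: writing $\det(\gamma)=u^2$, the condition reads $u^{k-2l}=1$ for all $u\in\bbF_q^\times$, i.e.\ $k\equiv 2l\pmod{q-1}$, which is the standard congruence forced by scalar matrices whenever $M_{k,l}(\Gamma)\neq 0$. So every nonzero form of weight $k$ and admissible type produces a $\Gamma$-invariant $\tfrac{k}{2}$-fold differential, and the weight grading on $M(\Gamma)$ matches the tensor-power grading on the section ring.

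The heart of the argument, and the first genuinely Drinfeld-specific step, is the behaviour at the cusps. Near $\infty$ I would use the parameter $t(z)=1/e(\bar\pi z)$, where $e$ is the $\bbF_q$-linear lattice exponential and $\bar\pi$ the Carlitz period, so that holomorphy of $f$ at the cusps means $f$ has a $t$-expansion $\sum_{n\geq 0}a_n t^{n}$. Because $e$ is $\bbF_q$-linear its formal derivative is the constant $1$, whence $\tfrac{d}{dz}e(\bar\pi z)=\bar\pi$ and therefore $dt=-\bar\pi\,t^{2}\,dz$, i.e.\ $dz=-\bar\pi^{-1}t^{-2}\,dt$. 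Consequently $(dz)^{\otimes k/2}$ acquires a pole of order $k$ in $t$, and $f(z)(dz)^{\otimes k/2}=(\text{const})\cdot\big(\textstyle\sum_{n\geq 0}a_n t^{n}\big)t^{-k}(dt)^{\otimes k/2}$ is a $\tfrac{k}{2}$-differential with pole of order at most $k$ at each cusp. This matches sections of $(\Omega^1_{\sX_{\Gamma}}(2\Delta))^{\otimes k/2}=(\Omega^1_{\sX_{\Gamma}})^{\otimes k/2}(k\Delta)$ on the nose; the coefficient $2$ in $2\Delta$ is forced by the quadratic vanishing $dt\sim t^{2}\,dz$, in contrast to the classical $dq\sim q\,dz$, and this is the crucial point distinguishing the Drinfeld case.

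It remains to treat the stacky points and to establish bijectivity. At the elliptic points $\sX_{\Gamma}$ carries nontrivial automorphism groups, and I would use a local quotient (root-stack) presentation to check that invariance of $f(z)(dz)^{\otimes k/2}$ under the stabilizer corresponds exactly to local integrality of the section of the stacky line bundle, so that no further twisting is needed away from the cusps. Injectivity of each component map is immediate since $(dz)^{\otimes k/2}$ is nowhere vanishing on $\Omega$, so $f(z)(dz)^{\otimes k/2}=0$ forces $f=0$. For surjectivity, a global section pulls back to a $\Gamma$-invariant $\tfrac{k}{2}$-fold differential on $\Omega$; since $\Omega$ is one-dimensional and $dz$ trivializes $\Omega^1_{\Omega}$, this differential equals $g(z)(dz)^{\otimes k/2}$ for a unique rigid analytic $g$, and running the transformation and pole analyses backwards shows $g$ is a modular form of weight $k$ and admissible type. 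Multiplicativity is clear from $\big(f_1(dz)^{\otimes k_1/2}\big)\big(f_2(dz)^{\otimes k_2/2}\big)=(f_1 f_2)(dz)^{\otimes(k_1+k_2)/2}$, so the component isomorphisms assemble into an isomorphism of graded rings.

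Finally, everything so far lives on the analytic side: $M_{k,l}(\Gamma)$ consists of rigid analytic functions, and the sections produced are analytic sections on $\sX_{\Gamma}^{\mathrm{an}}$. To reach the stated isomorphism with the algebraic section ring $R(\sX_{\Gamma},\Omega^1_{\sX_{\Gamma}}(2\Delta))$ I would invoke rigid analytic GAGA for stacks in the form of \cite{Porta-Yu-Higher-analytic-stacks-GAGA}, identifying algebraic and analytic coherent cohomology on the proper Deligne--Mumford stack $\sX_{\Gamma}$ and upgrading the analytic identification to an algebraic one degree by degree. I expect the main obstacle to be the precise local bookkeeping at the boundary: simultaneously pinning down the cusp-local model so that holomorphy translates exactly into the divisor $k\Delta$ (the justification of the coefficient $2$) and the stacky-point model so that the elliptic automorphisms impose no extra constraint, all while verifying the hypotheses of stacky GAGA for the non-representable curve $\sX_{\Gamma}$. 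The congruence $k\equiv 2l$ and the arithmetic of types must also be tracked so that the weight grading on $M(\Gamma)$ is faithfully reproduced by the tensor grading on the section ring.
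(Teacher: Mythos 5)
Your proposal is correct and follows essentially the same route as the paper's proof: the map $f\mapsto f(z)(dz)^{\otimes k/2}$, the cancellation of automorphy factors using $\det(\Gamma)=(\bbF_q^{\times})^2$ together with $k\equiv 2l\pmod{q-1}$, the computation $dt=-\overline{\pi}t^2\,dz$ showing $dz$ has a double pole at the cusps (which forces the twist by $2\Delta$), and the passage from analytic to algebraic sections via the rigid stacky GAGA theorem of \cite{Porta-Yu-Higher-analytic-stacks-GAGA}. The only difference is one of detail: you spell out injectivity, surjectivity, and the stacky-point bookkeeping explicitly, where the paper appeals to the ``typical argument'' as in \cite{VZB} and to the well-known correspondence between invariant differentials and modular forms.
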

		
		To handle the general case of congruence subgroup $\Gamma$ which may contain matrices with non-square determinants, we consider the normal subgroup $\Gamma_2=\{\gamma\in \Gamma: \det(\gamma)\in (\bbF_q^{\times})^2\}$ of $\Gamma.$ We compare the algebras of Drinfeld modular forms for $\Gamma$ and $\Gamma_2$ and arrive at the following result. Note that this reduces giving an answer to Gekeler for the congruence subgroups $\Gamma$ to computing log canonical rings of stacky Drinfeld modular curves.
		\begin{theorem}[Theorem \ref{thm: decomp of mod forms} in the text]
			Let $q$ be a power of an odd prime. Let $\Gamma\leq \GL_2(\bbF_q[T])$ be a congruence subgroup containing the diagonal matrices in $\GL_2(\bbF_q[T]).$ Let $\Gamma_2=\{\gamma\in \Gamma: \det(\gamma)\in (\bbF_q^{\times})^2\}.$ As rings, we have
			$M(\Gamma)\cong M(\Gamma_2),$
			with \[M_{k,l}(\Gamma_2)=M_{k,l_1}(\Gamma)\oplus M_{k,l_2}(\Gamma)\] on each graded piece, where $l_1,l_2$ are the two solutions to $k\equiv 2l\pmod{q-1}.$ 
		\end{theorem}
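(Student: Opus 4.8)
The plan is to exploit the normal inclusion $\Gamma_2 \lhd \Gamma$ together with the representation theory of the order-two quotient. First I would record the group-theoretic setup: since $\Gamma$ contains the diagonal matrices, the determinant sends $\Gamma$ onto $\bbF_q^{\times}$ and $\Gamma_2$ onto the squares $(\bbF_q^{\times})^2$, so $\Gamma_2$ is normal in $\Gamma$ of index $2$ with $\Gamma/\Gamma_2 \cong \bbF_q^{\times}/(\bbF_q^{\times})^2 \cong \bbZ/2\bbZ$, the hypothesis that $q$ is odd guaranteeing this quotient is genuinely of order $2$. The first substantive observation is that the type of a form for $\Gamma_2$ is only well defined modulo $(q-1)/2$ rather than modulo $q-1$: as $\det(\Gamma_2)=(\bbF_q^{\times})^2$ is cyclic of order $(q-1)/2$, two integers $l,l'$ induce the same transformation character on $\Gamma_2$ exactly when $l\equiv l'\pmod{(q-1)/2}$. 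Hence each $\Gamma_2$-type $l$ lifts to precisely the two residues $l_1$ and $l_2 = l_1 + (q-1)/2$ modulo $q-1$, and for even $k$ these are exactly the two solutions of $k\equiv 2l\pmod{q-1}$.

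Next I would fix $\delta\in\Gamma\setminus\Gamma_2$ and introduce the weight-$k$, type-$l$ slash operator $(f\mid\gamma)(z) = (\det\gamma)^{l}\,j(\gamma,z)^{-k} f(\gamma z)$, where $j(\gamma,z)=cz+d$. Using the cocycle identity for $j$ and the normality $\delta\Gamma_2\delta^{-1}=\Gamma_2$ (so that both the determinant and the automorphy factor are unchanged under conjugation), I would verify that $f\mapsto f\mid\delta$ preserves $M_{k,l}(\Gamma_2)$, that $\mid$ is a right action of $\Gamma$, and that it is trivial on $\Gamma_2$; the action therefore factors through $\Gamma/\Gamma_2\cong\bbZ/2\bbZ$, and $\iota(f):=f\mid\delta$ is an involution of $M_{k,l}(\Gamma_2)$. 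Because $q$ is odd, $2$ is invertible and I can split
\[
M_{k,l}(\Gamma_2) = M_{k,l}(\Gamma_2)^{+} \oplus M_{k,l}(\Gamma_2)^{-}
\]
into the $\pm 1$-eigenspaces of $\iota$.

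The crux is identifying these eigenspaces with spaces of $\Gamma$-forms of the two types. A form in the $+1$-eigenspace is invariant under both $\Gamma_2$ and $\delta$, hence under all of $\Gamma$ with type $l_1$, which identifies the $+1$-eigenspace with $M_{k,l_1}(\Gamma)$. For the $-1$-eigenspace the decisive input is the quadratic-character evaluation $(\det\delta)^{(q-1)/2}=-1$, valid because $\det\delta$ is a non-square (Euler's criterion): replacing the normalization $(\det\delta)^{l_1}$ by $(\det\delta)^{l_2}$ negates the slash operator, so the $-1$-eigenspace of $\iota$ is exactly $M_{k,l_2}(\Gamma)$. This yields the claimed graded decomposition $M_{k,l}(\Gamma_2)=M_{k,l_1}(\Gamma)\oplus M_{k,l_2}(\Gamma)$, once one checks that the eigenspace projectors respect the holomorphy conditions at the cusps, which is routine since $\delta$ merely permutes the cusps of $\Gamma_2$.

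Finally, for the ring statement I would observe that both eigenspaces consist of genuine $\Gamma$-invariant forms, so every weight-$k$ form for $\Gamma_2$ already lies in $M(\Gamma)$; combined with the evident inclusion $M(\Gamma)\subseteq M(\Gamma_2)$, the two graded rings coincide as subspaces of rigid analytic functions on $\Omega$, with multiplication inherited in both cases. The content of the isomorphism $M(\Gamma)\cong M(\Gamma_2)$ is then precisely the reorganization of the type grading (modulo $q-1$ for $\Gamma$, modulo $(q-1)/2$ for $\Gamma_2$) supplied by the graded decomposition above. I expect the main obstacle to be the careful bookkeeping of types in the eigenspace step --- in particular matching the two lifts $l_1,l_2$ to the $\pm$ eigenspaces through the sign $(\det\delta)^{(q-1)/2}=-1$ --- rather than any deep geometric input, since everything else follows formally once the slash action is shown to descend to the quotient $\Gamma/\Gamma_2$.
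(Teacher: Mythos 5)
Your proof is correct, but it takes a genuinely different route from the paper's own proof of this theorem. The paper works with $u$-series expansions at infinity: using the square generator $\beta=\alpha^2$ and Proposition \ref{p: generator trick}, it shows a form $f\in M_{k,l}(\Gamma_2)$ satisfies $f(\beta z)=\beta^{-k/2}f(z)$, so its expansion coefficients $a_n$ vanish unless $n\equiv k/2\pmod{(q-1)/2}$; it then defines $f_1$ and $f_2$ explicitly as the sub-series supported on $n\equiv k/2$ and on $n\equiv k/2+(q-1)/2\pmod{q-1}$, and verifies via $u(\alpha z)=\alpha^{-1}u(z)$ and the coset decomposition of Lemma \ref{l: Gamma2 normal, index 2, coset rep} that these pieces are $\Gamma$-modular of types $l_1$ and $l_2$. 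Your argument instead runs the slash action of the order-two quotient $\Gamma/\Gamma_2$ on $M_{k,l}(\Gamma_2)$ and splits it into $\pm1$-eigenspaces of the involution $\iota=\cdot|_{\delta}$, pinning down the two types by Euler's criterion $(\det\delta)^{(q-1)/2}=-1$. The two decompositions in fact coincide: taking $\delta=\begin{psmallmatrix}\alpha&0\\0&1\end{psmallmatrix}$ with $\alpha$ a non-square, the automorphy factor is trivial and $\iota$ acts on a $u$-series term $a_nu^n$ by $\alpha^{l_1-n}$, so your $+$ and $-$ eigenspaces are exactly the paper's $f_1$- and $f_2$-parts. What differs is what each mechanism buys. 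The paper's computation produces the components explicitly in terms of expansion coefficients, which combines usefully with Lemma \ref{l: u-series coeffs determine form}; your argument is coordinate-free, avoids the $u$-series manipulations entirely, and generalizes with no extra work to any intermediate subgroup $\Gamma_1\leq\Gamma'\leq\Gamma$. Indeed, your eigenspace/projector argument is essentially the group-ring idempotent argument the paper itself deploys to prove the generalization (Theorem \ref{thm: generalized decomp}), of which the present theorem is the index-two special case; so your route anticipates the paper's own later, more general proof rather than its proof of this particular statement.
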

		
		Finally, we generalize the previous comparison theorem to a larger class of subgroups $\Gamma'\leq \Gamma,$ where $\Gamma$ is some chosen or distinguished congruence subgroup as above. This idea was proposed in correspondence by Gebhard B\"ockle, as was the proof technique which we execute. This result is similar to classical results about nebentypes of modular forms. 
		\begin{theorem}[Theorem \ref{thm: generalized decomp} in the text]
			Let $q$ be a power of an odd prime and let $\Gamma\leq \GL_2(\bbF_q[T])$ be a congruence subgroup. Let $\Gamma_1=\{\gamma\in \Gamma: \det(\gamma)=1\}.$ Suppose that $\Gamma'$ is a congruence subgroup such that $\Gamma_1\leq \Gamma'\leq \Gamma.$ As algebras
			\[M(\Gamma)=M(\Gamma'),\] and each component $M_{k,l}(\Gamma')$ is some direct sum of components $M_{k,l'}(\Gamma)$ for some nontrivial $l',$ the distinct solutions to $k\equiv [\Gamma:\Gamma']l'\pmod{q-1}.$
		\end{theorem}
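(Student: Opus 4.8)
The plan is to realize the modular forms for $\Gamma'$ as a representation of the finite abelian quotient attached to the determinant and to split it into isotypic (nebentype) pieces; this is the classical mechanism that B\"ockle's suggestion points to and that already underlies Theorem \ref{thm: decomp of mod forms}. The first observations are purely group-theoretic. Since $\Gamma_1=\ker(\det\colon\Gamma\to\bbF_q^{\times})$, the intermediate group $\Gamma'$ is the preimage of $\det(\Gamma')$, so $\Gamma'\lhd\Gamma$ (the quotient $\Gamma/\Gamma_1\cong\det(\Gamma)$ is abelian, hence every intermediate subgroup is normal), and $G:=\Gamma/\Gamma'\cong\det(\Gamma)/\det(\Gamma')$ is cyclic of order $n=[\Gamma:\Gamma']$. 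Crucially $n\mid q-1$, so $n$ is prime to the residue characteristic $p$ and $\bbF_q$ already contains the $n$-th roots of unity; this is what lets me run ordinary (semisimple) representation theory over the coefficient field $\bbC_\infty$.

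Next I would set up the action and decompose. For each even $k$ put $V_k=M_k(\Gamma_1)$, the weight-$k$ forms for the determinant-one group, a space on which the type plays no role. The abelian group $\det(\Gamma)\cong\Gamma/\Gamma_1$ acts on $V_k$ through the weight-$k$ slash operator $f\mapsto f|_k\gamma=(cz+d)^{-k}f(\gamma z)$, which depends on $\gamma$ only through $\det(\gamma)$; this is well defined because $\Gamma_1$ is normal. Its order divides $q-1$, hence is prime to $p$ and satisfies $\bbF_q\supseteq\mu_{|\det(\Gamma)|}$, so by Maschke's theorem $V_k$ splits over $\bbC_\infty$ into character spaces $V_k=\bigoplus_{\chi\in\widehat{\det(\Gamma)}}V_k^{\chi}$. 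Because $\det(\Gamma)\leq\bbF_q^{\times}$ is cyclic, every such $\chi$ is a power (determinant) character $d\mapsto d^{-l'}$, and $V_k^{\chi}$ is exactly the type component $M_{k,l'}(\Gamma)$; thus $V_k=\bigoplus_{l'}M_{k,l'}(\Gamma)$, and likewise $V_k=\bigoplus_{l}M_{k,l}(\Gamma')$ upon restricting the action to the subgroup $\det(\Gamma')$. Grouping the $\det(\Gamma)$-characters according to their restriction to $\det(\Gamma')$ refines the latter decomposition into the former and yields $M_{k,l}(\Gamma')=\bigoplus_{l'}M_{k,l'}(\Gamma)$, the sum running over the fibre of $\widehat{\det(\Gamma)}\to\widehat{\det(\Gamma')}$ above the type-$l$ character, a fibre of size $n$.

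The algebra statement $M(\Gamma)=M(\Gamma')$ then comes out almost for free once one grades only by weight. Summing the two isotypic decompositions over all types recovers in both cases the single common space $V_k$, so $M_k(\Gamma)=V_k=M_k(\Gamma')$ as subspaces of the rigid analytic functions on the Drinfeld upper half-plane $\Omega$. Since both graded algebras inherit the same pointwise multiplication from the ambient function ring and $V_k\cdot V_{k'}\subseteq V_{k+k'}$, this identification is an isomorphism of weight-graded $\bbC_\infty$-algebras. In particular this reproduces Theorem \ref{thm: decomp of mod forms} when $\Gamma'=\Gamma_2$ and $n=2$.

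The main obstacle, and the part requiring genuine care, is matching the bookkeeping of types: showing that the $l'$ occurring in the decomposition of a given $M_{k,l}(\Gamma')$ are exactly the distinct solutions of $k\equiv n\,l'\pmod{q-1}$. Two normalizations must be reconciled. On one side, the representation-theoretic description produces the $l'$ as the lifts of $l$ under restriction $\widehat{\det(\Gamma)}\to\widehat{\det(\Gamma')}$, that is, the $l'$ congruent to $l$ modulo the order of $\det(\Gamma')$. On the other side, nonvanishing of $M_{k,l'}(\Gamma)$ is governed by the scalar matrices $\lambda I\in\Gamma$, which act on weight-$k$ forms by $\lambda^{-k}$ and so impose a congruence between $k$ and $2l'$ on the subgroup of square scalars. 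I would verify that, after recording all types modulo $q-1$ (the order of $\bbF_q^{\times}$) and using $n=[\det(\Gamma):\det(\Gamma')]$, these constraints combine into the single clean congruence $k\equiv n\,l'\pmod{q-1}$, whose number of solutions $\gcd(n,q-1)=n$ then agrees with the fibre size computed above. This is precisely the step where the interaction between $\det(\Gamma)$, $\det(\Gamma')$, and the central scalars of $\Gamma$ enters, and where one must be most careful that ``type'' is read with the correct modulus throughout.
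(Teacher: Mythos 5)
Your proposal is correct in substance and runs on the same engine as the paper's own proof --- the nebentype (isotypic) decomposition of spaces of forms under the cyclic determinant quotient, semisimple because its order divides $q-1$ and hence is prime to the characteristic, with all needed roots of unity already in $\bbF_q\subset C$ --- but it packages the decomposition differently, and the comparison is instructive. The paper lets $\Gamma/\Gamma'\cong\det(\Gamma)/\det(\Gamma')$ act by slash operators directly on $M_{k,l}(\Gamma')$ and splits it by the idempotents of the group ring $\bbF_q[\Gamma/\Gamma'],$ obtaining $M_{k,l}(\Gamma')e_i=M_{k,l+in'}(\Gamma)$ with $n'=\#\det(\Gamma'),$ and it gets holomorphy at the cusps of $\Gamma$ from Proposition \ref{p: elliptic points and cusps}. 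You instead decompose $V_k=M_k(\Gamma_1)$ under the full quotient $\Gamma/\Gamma_1\cong\det(\Gamma)$ and then group characters along the restriction map $\widehat{\det(\Gamma)}\to\widehat{\det(\Gamma')}$; your fibres $\{l':l'\equiv l\pmod{n'}\}$ are exactly the paper's indices $l+in'.$ Your route makes the ring-level identity $M(\Gamma)=M(\Gamma')$ immediate (both sides are $\bigoplus_k V_k$) and makes the fibre count $[\Gamma:\Gamma']$ transparent; the paper's route works directly with the objects named in the statement and never introduces $M_k(\Gamma_1).$ One caveat deserves emphasis: the step you defer --- reconciling these fibres with the stated congruence $k\equiv[\Gamma:\Gamma']\,l'\pmod{q-1}$ --- is not carried out in the paper's proof either, and it cannot be done in general. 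That congruence reproduces the true index set only in special cases such as $\Gamma'=\Gamma_2$ with $\det(\Gamma)=\bbF_q^{\times}$ (Theorem \ref{thm: decomp of mod forms}, where the weight-type relation $k\equiv 2l\pmod{q-1}$ identifies $\{l,\,l+\tfrac{q-1}{2}\}$ with the solution set of $k\equiv 2l'$); by contrast, for $\Gamma'=\Gamma_1\leq\Gamma=\GL_2(A)$ and $q\geq 5$ the congruence reads $k\equiv 0\pmod{q-1},$ which has no solutions when $k=q+1,$ yet $M_{q+1}(\Gamma_1)\supseteq M_{q+1,1}(\GL_2(A))\ni h$ is nonzero. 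So the decomposition you actually prove, indexed by $l'\equiv l\pmod{\#\det(\Gamma')},$ is the correct form of the theorem's conclusion, and your instinct to flag rather than force that last piece of bookkeeping is sound.
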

		
		\section{Background} 
		
		In the classical number-field setting there is an isomorphism between the ring of modular forms 
		\[M = \bigoplus_{d\geq 0} M_{2d}(\Gamma)\]
		for $\Gamma\leq \SL_2(\bbZ)$ a congruence subgroup, and the ring of global sections of a particular line bundle, such as the sheaf of differentials or the canonical bundle, on the corresponding modular curve. By ring of global sections, we mean a ring of form 
		\[R(\sX,D)=R_D=\bigoplus_{d\geq 0}H^0(\sX,dD),\]
		where $\sX$ is a stacky curve and $D$ is a divisor on $\sX.$
		This allows one to compute algebras of modular forms using the geometry of the moduli space.\\
		
		We will briefly introduce Drinfeld modules, modular forms and modular  curves. In particular we need notation so that we can discuss series of modular forms at cusps of the modular curve, the grading of the algebra of modular forms and some special points on the modular curves. We also mention some of the theory of sections rings for stacks.  
		
		\subsection{Notation and the Drinfeld Setting}
		
		Some references for Drinfeld modular curves are \cite{Gekeler-Curves}, \cite{Gekeler-Invariants} and \cite{Mason-Schweizer-elliptic-pts-Drinfeld-modular-grps}; for Drinfeld modular forms see the survey \cite{Gekeler-survey-Drinfeld-modular-forms} and the papers \cite{Gekeler-jacobians}, \cite{Gekeler-Coeff}, \cite{Breuer-Gekeler-h-function}, \cite{Cornelissen-lvlT} and \cite{Dalal-Kumar-Gamma_0(T)-structure}.\\
		
		Let $\bbF_q$ be the finite field of order $q$ a power of an odd prime. As function-field analogs of $\bbZ,~\bbQ,~\bbR$ and $\bbC$ define the rings $\displaystyle{A=\bbF_q[T], ~K=\operatorname{Frac}(A)=\bbF_q(T), ~K_{\infty}=\bbF_q\left(\!\left(\frac{1}{T}\right)\!\right)},$ the completion of $K$ at the place at $\infty,$ and let $C=\widehat{\overline{K_{\infty}}}$ be the completion of the algebraic closure of $K_{\infty},$ respectively. So, $C$ is an algebraically closed, complete, and non-archimedean field. The Drinfeld-setting version of the upper half-plane $\cH\subset \bbC$ is $\Omega\overset{def}{=}C-K_{\infty}.$ \\
		
		We have the usual discrete valuation $v: K^{\times}\to \bbZ$ given by 
		\[v\left(\frac{\sum_0^n a_iT^i}{\sum_0^m b_iT^i}\right)=m-n,\] where we have assumed $a_n\neq 0$ and $b_m\neq 0,$ and which we extend to the Laurent series $K_{\infty}$ by 
		\[v\left(\sum_{i\geq n}a_iT^i\right)=-n\quad\text{and}\quad v(0)=\infty,\] again with $a_n\neq 0.$ The corresponding metric, which we extend to $C,$ is the non-archimedean norm defined by $|f|_{\infty}=q^{-v(f)}.$ This $|\cdot|$ is the extension of the $\infty$-adic absolute value to $C,$ see e.g.\ \cite[Section $(2.2)$]{Poonen-DrinfeldIntro}. We say $0\neq a\in A$ has $|a|_{\infty}=q^{\deg a}$ and $|0|_{\infty}=0.$ \\
		
		
		Note that the group $\GL_2(A)$ acts on $\Omega$ by M\"obius transformations as $\SL_2$ acts on $\cH,$ but $\det(\gamma)\in \bbF_q^{\times}$ for $\gamma\in \GL_2(A).$ Let $N\in A$ be a non-constant, monic polynomial and let $\Gamma(N)$ be the subgroup of $\GL_2(A)$ consisting of matrices congruent to $\begin{psmallmatrix}1&0\\0&1\end{psmallmatrix}$ modulo $N.$ A subgroup $\Gamma$ of $\GL_2(A)$ such that $\Gamma(N)\subseteq \Gamma$ for some $N$ is a \textbf{congruence subgroup} and we call such an $N$ of the least degree the \textbf{conductor} of $\Gamma.$ \\
		
		Note that if $\Gamma\leq \GL_2(A)$ is some congruence subgroup such that 
		for every $\alpha,\alpha'\in \bbF_q^{\times},$ $\Gamma$ contains the matrices of form $\displaystyle{\begin{psmallmatrix}\alpha&0\\0&\alpha'\end{psmallmatrix}},$ that is, the diagonal matrices in $\GL_2(A),$ then we have
		$\det\Gamma=\{\det(\gamma):\gamma\in \Gamma\}=\bbF_q^{\times}.$ In general $\det\Gamma$ is a subgroup of $\bbF_q^{\times},$ such as for example in the class of congruence subgroups we describe with Theorem \ref{thm: forms to differentials}.\\
		
		Let $(\det\Gamma)^2$ be the set of squares of elements in $\det\Gamma,$ that is, let $(\det\Gamma)^2=\{x^2: x\in \det\Gamma\}.$ Let \[\Gamma_2\overset{def}{=}\{\gamma\in \Gamma:\det(\gamma)\in (\det\Gamma)^2\}=\{\gamma\in \Gamma:\det(\gamma)\in (\bbF_q^{\times})^2\}.\]
		When we distinguish $\Gamma\leq \GL_2(A)$ and its subgroup $\Gamma_2,$ we assume that $\Gamma$ contains diagonal matrices, so $\det\Gamma_2=(\bbF_q^{\times})^2.$\\
		
		The condition that $\Gamma$ has all possible determinants is simply for ease of notation, as it is more pleasant to compute congruences modulo $q-1$ rather than $\#\det\Gamma.$ Our emphasis on the case when $q$ is odd is essential as we make repeated use of the fact that $q-1$ is even. Under this assumption, we also have $k/2$ is an integer when we have non-zero modular forms of weight $k$ and type $l.$ \\ 

		We will make use of a kind of  ``parity'' for congruence subgroups for which we introduce the following terminology:
		\begin{definition}
			We say that a congruence subgroup $\Gamma$ is \textbf{square} if there is some $z\in \Omega$ such that the stabilizer $\Gamma_z=\{\gamma\in \Gamma: \gamma z=z\}$ strictly contains 
			$\displaystyle{\bbF_q^{\times}\cong \left\{\begin{psmallmatrix}\alpha&0\\0&\alpha\end{psmallmatrix}:\alpha\in \bbF_q^{\times}\right\}}$ and every $\gamma\in \Gamma_z\setminus \bbF_q^{\times}$ has a square determinant in $\bbF_q^{\times}.$ Likewise, $\Gamma$ is \textbf{non-square} if it contains a stabilizer $\Gamma_z$ for some $z\in \Omega$ strictly larger than $\bbF_q^{\times}$ and that stabilizer $\Gamma_z$ contains some $\gamma$ with $\det\gamma\in \bbF_q^{\times}\setminus (\bbF_q^{\times})^2.$
		\end{definition}
		
		This idea will help distinguish the geometric invariants involved in the computations at the end of this manuscript into two cases. In our application stabilizers are all $\GL_2(A)$-conjugate subgroups of $\bbF_{q^2}^{\times}$ so that one only needs to check for a single point $z\in \Omega$ with a stabilizer $\Gamma_z\supsetneq \bbF_q^{\times}$ whether $\Gamma_z$ contains some matrix with a non-square determinant.   
		
		\subsection{Drinfeld modules}
		
		The theory of Drinfeld modules is rich in both algebraic and analytic structure. Both interpretations and their equivalence are important in understanding the moduli spaces of Drinfeld modules of a given rank. We state only what we need for our computation of the canonical ring of certain log-stacky moduli spaces and the corresponding algebras of Drinfeld modular forms. 
		
		\subsubsection{Analytic Approach}
		
		We give a quick description of Drinfeld modules as lattice quotients. 
		Following Breuer \cite{Breuer-Gekeler-h-function}, we say an $A$-submodule of $C$ of form $\Lambda = \omega_1A+\cdots+\omega_rA,$ for $\omega_1,\cdots, \omega_r\in C$ some $K_{\infty}$-linearly independent elements, is an \textbf{$A$-lattice of rank $r.$} The \textbf{exponential function of $\Lambda$}, $e_{\Lambda}:C\to C,$ defined by
		\[e_{\Lambda}(z)\overset{def}{=}z\prod_{0\neq \lambda \in \Lambda}\left(1-\frac{z}{\lambda}\right) \] is holomorphic in the rigid analytic sense (see e.g.\ \cite[Definition $2.2.1$]{Frensel-vanderPut-Rigid-Analytic_Geom}), surjective, $\bbF_q$-linear, $\Lambda$-periodic and has simple zeros on $\Lambda.$\\
		
		We characterize the notion of an $\bbF_q$-linear function with the following result.
		\begin{lemma}
			Let $\bbK$ be a field of characteristic $p$ containing $\bbF_q.$ A given $f(X)\in \bbK[X]$ is \textbf{$\bbF_q$-linear} (i.e.\ $f(\alpha X)=\alpha f(X)$ for all $\alpha \in \bbF_q$) if and only if $\displaystyle{f(X)=\sum_{i=0}^n a_iX^{q^i}}$ for some $n\geq 0$ and $a_0,\ldots, a_n\in K.$
		\end{lemma}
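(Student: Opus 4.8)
The plan is to prove both implications, and for the nontrivial direction to separate the two conditions that together make up $\bbF_q$-linearity, namely additivity $f(X+Y)=f(X)+f(Y)$ and the scaling relation $f(\alpha X)=\alpha f(X)$; it is the additivity that does the real work, so I read ``$\bbF_q$-linear'' in the standard sense that includes it. (The scaling relation alone is not enough: over any $\bbK\supseteq \bbF_q$ of characteristic $p$ the polynomial $X^{2q-1}$ satisfies $f(\alpha X)=\alpha f(X)$ for all $\alpha\in\bbF_q$, since $\alpha^{2q-1}=\alpha$ on $\bbF_q^{\times}$, yet $2q-1$ is not a power of $q$.)

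For the easy direction, suppose $f(X)=\sum_{i=0}^n a_i X^{q^i}$ with $a_i\in\bbK$. Because $\bbK$ has characteristic $p$ and $q$ is a power of $p$, the $q$-power map is additive (the identity $(X+Y)^q=X^q+Y^q$, iterated), so each $X^{q^i}$ and hence $f$ is additive; and $(\alpha X)^{q^i}=\alpha^{q^i}X^{q^i}=\alpha X^{q^i}$ for $\alpha\in\bbF_q$ since $\alpha^q=\alpha$ there, so $f(\alpha X)=\alpha f(X)$. Thus $f$ is $\bbF_q$-linear.

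For the forward direction, write $q=p^m$ and $f(X)=\sum_j c_j X^j$. First I would exploit additivity: the coefficient of $X^a Y^{j-a}$ in $f(X+Y)-f(X)-f(Y)$ is $c_j\binom{j}{a}$, so the identity $f(X+Y)=f(X)+f(Y)$ forces $\binom{j}{a}\equiv 0\pmod p$ for every $j$ with $c_j\neq 0$ and every $0<a<j$ (and $c_0=0$ from $f(0)=0$). This pins down $f$ as a $p$-polynomial $f(X)=\sum_i b_i X^{p^i}$. Next I would impose the scaling relation: comparing $f(\alpha X)=\sum_i b_i\alpha^{p^i}X^{p^i}$ with $\alpha f(X)=\sum_i b_i\alpha X^{p^i}$ gives $b_i(\alpha^{p^i}-\alpha)=0$ for all $\alpha\in\bbF_q$. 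Whenever $b_i\neq 0$ this requires $\alpha^{p^i}=\alpha$ throughout $\bbF_q$; since the fixed field of the $p^i$-power Frobenius is $\bbF_{p^i}$, this means $\bbF_q\subseteq\bbF_{p^i}$, i.e. $m\mid i$, so $p^i=q^{i/m}$ is a power of $q$. Reindexing gives $f(X)=\sum_k a_k X^{q^k}$, as claimed.

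The main obstacle is the combinatorial input in the additivity step: one must show that $p\mid\binom{j}{a}$ for all $0<a<j$ exactly when $j$ is a power of $p$. I would settle this by writing $j$ in base $p$ and invoking Lucas' theorem, which says $\binom{j}{a}\not\equiv 0\pmod p$ precisely when the base-$p$ digits of $a$ are dominated digitwise by those of $j$. If $j$ has a single nonzero base-$p$ digit equal to $1$ (i.e. $j$ is a power of $p$), no $a$ with $0<a<j$ can be so dominated, so all intermediate binomial coefficients vanish mod $p$. Conversely, if $j$ is not of this form—having either two or more nonzero digits or a single digit exceeding $1$—one can choose an intermediate $a$ picking out a proper digitwise-dominated subword of $j$'s expansion, producing a binomial coefficient that is nonzero mod $p$. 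Everything after this point is routine: the scaling step is a short finite-field computation and the converse is immediate.
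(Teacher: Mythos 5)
The paper states this lemma without proof, so there is no proof of record to compare against; your argument stands on its own, and it is correct. More importantly, you caught a genuine defect in the statement: the parenthetical in the lemma defines $\bbF_q$-linearity by the scaling relation $f(\alpha X)=\alpha f(X)$ alone, and under that literal reading the lemma is false --- your counterexample $X^{2q-1}$ (for which $\alpha^{2q-1}=\alpha$ for every $\alpha\in\bbF_q$) settles this, so additivity must be built into the definition, exactly as you do. Given that reading, your two-step argument is complete: additivity forces $c_j\binom{j}{a}\equiv 0\pmod p$ for all $0<a<j$, and the Lucas-theorem analysis correctly characterizes the exponents $j$ for which all intermediate binomial coefficients vanish mod $p$ as exactly the powers of $p$, so $f$ is a $p$-polynomial $\sum_i b_iX^{p^i}$; then the scaling relation gives $b_i(\alpha^{p^i}-\alpha)=0$ for all $\alpha\in\bbF_q$, and since the roots of $X^{p^i}-X$ form $\bbF_{p^i}$, any surviving exponent satisfies $\bbF_q\subseteq\bbF_{p^i}$, i.e.\ $m\mid i$ with $q=p^m$, so $p^i$ is a power of $q$. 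The converse is the routine Frobenius computation you give. One cosmetic note: the lemma's conclusion writes $a_0,\ldots,a_n\in K$, which should be $\bbK$; your proof correctly works over $\bbK$ throughout.
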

		
		Let $C\{X^q\}\overset{def}{=}\{a_0X+a_1X^q+\cdots+a_nX^{q^n}: a_0,\cdots, a_n\in C, ~n\geq 0 \}$ denote the non-commutative polynomial ring of $\bbF_q$-linear polynomials over $C,$ with the operation of multiplication given by composition. With this ring defined, we return to exponential functions of lattices. For each $a\in A$ the exponential satisfies the functional equation 
		\[e_{\Lambda}(az)=\varphi_a^{\Lambda}(e_{\Lambda}(z)),\]
		where $\varphi_a^{\Lambda}(X)\in C\{X^q\}$ is some element of degree $q^{r\deg a}.$
		We say a ring homomorphism $\varphi: A\to C\{X^q\}$ given by \[a\mapsto \varphi_a^{\Lambda}\overset{def}{=}a_0(a)X+\cdots+a_{r\deg a}(a)X^{q^{r\deg a}},\]
		(an $\bbF_q$-algebra monomorphism) is a \textbf{Drinfeld module of rank $r$} if the coefficient with largest index is non-zero.
		
		\subsubsection{Algebraic Approach}
		
		We recall, without any proofs, some facts concerning the algebraic theory which corresponds to the definition above. A more complete discussion of these next facts is found in \cite[Definition $3.1.3$]{Papikian-Drinfeld-modules} and \cite[Lemma $3.1.4$]{Papikian-Drinfeld-modules}. We are mostly interested in the notation.\\ 
		
		We state the following result so that when we define a moduli space of Drinfeld modules, we can make sense of Drinfeld modules over an arbitrary base scheme and therefore eventually have a well-defined category fibered in groupoids when we consider moduli stacks later. 
		
		\begin{theorem}\cite[Page $65$]{Waterhouse-into-aff-grp-schemes}\label{t: endomorphisms of bbG_a}
			Let $B$ be an $A$-algebra, and let $\bbG_{a,B}$ denote the affine additive group scheme over $B$ represented by $\Spec B[t].$ The set $\End_{\bbF_q}(\bbG_{a,B})$ of $\bbF_q$-linear endomorphisms of $\bbG_{a,B},$ is 
			$\End_{\bbF_q}(\bbG_{a,B})\cong B\{X^q\}.$ 
		\end{theorem}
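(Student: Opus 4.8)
The plan is to prove the identification $\End_{\bbF_q}(\bbG_{a,B}) \cong B\{X^q\}$ by unwinding the functor-of-points description of $\bbG_{a,B}$ and computing morphisms of group schemes directly on coordinate rings. Since $\bbG_{a,B} = \Spec B[t]$, a morphism of $B$-schemes $\bbG_{a,B} \to \bbG_{a,B}$ corresponds by the anti-equivalence between affine schemes and commutative $B$-algebras to a $B$-algebra homomorphism $\psi^\sharp : B[t] \to B[t]$, which is determined freely by the image $f(t) = \psi^\sharp(t) \in B[t]$. So set-theoretically, endomorphisms of the underlying scheme correspond to polynomials $f \in B[t]$; the content of the theorem is to pin down exactly which polynomials give \emph{$\bbF_q$-linear group-scheme endomorphisms}, and to check these form a ring under composition isomorphic to $B\{X^q\}$.

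First I would impose the condition that $\psi$ is a homomorphism of group schemes. The group law on $\bbG_{a,B}$ is comultiplication $t \mapsto t \otimes 1 + 1 \otimes t$ on $B[t]$, so compatibility forces $f(t) \in B[t]$ to satisfy $f(x+y) = f(x) + f(y)$ as an identity in $B[x,y]$; that is, $f$ must be an \emph{additive} polynomial. Next I would impose $\bbF_q$-linearity: the $\bbF_q$-action on $\bbG_{a,B}$ is by scalar multiplication $t \mapsto \alpha t$, and commuting with this action forces $f(\alpha t) = \alpha f(t)$ for every $\alpha \in \bbF_q$. At this point the characterization of $\bbF_q$-linear polynomials already proved in the preceding lemma applies almost verbatim: an $\bbF_q$-linear polynomial over a field extension of $\bbF_q$ is exactly one of the form $\sum_{i=0}^n a_i X^{q^i}$. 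The one wrinkle is that $B$ is a general $A$-algebra rather than a field, so I would either re-derive the additive/$\bbF_q$-linear normal form over an arbitrary $\bbF_q$-algebra $B$ (comparing coefficients of $(x+y)^{q^i}$ via the characteristic-$p$ Frobenius, where all intermediate binomial coefficients $\binom{q^i}{j}$ vanish) or invoke that the defining identities are polynomial identities that hold universally and hence descend to $B$.

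Having matched the underlying sets---$\bbF_q$-linear group-scheme endomorphisms on one side, expressions $\sum a_i X^{q^i}$ with $a_i \in B$ on the other---the final task is to promote this bijection to a ring isomorphism. Addition of endomorphisms corresponds to pointwise addition, which matches coefficientwise addition in $B\{X^q\}$. The crucial point is multiplication: the product in $\End_{\bbF_q}(\bbG_{a,B})$ is \emph{composition} of endomorphisms, and one checks that composing $f(X) = \sum a_i X^{q^i}$ with $g(X) = \sum b_j X^{q^j}$ produces the twisted-polynomial product in $B\{X^q\}$, where the noncommutativity arises precisely because $X^{q^i} \circ (bX) = b^{q^i} X^{q^i}$ introduces a Frobenius twist on coefficients. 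I would verify that $\tau \cdot b = b^q \cdot \tau$ (writing $\tau = X^q$) is exactly the relation encoding this composition, so that the bijection is a ring homomorphism, and conclude it is an isomorphism.

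The main obstacle I anticipate is neither step individually but the bookkeeping of the Frobenius twist in the multiplication, and making sure the argument goes through over a general base ring $B$ rather than a field: the lemma is stated for fields, so the classification of additive $\bbF_q$-linear polynomials must be justified over an arbitrary $\bbF_q$-algebra. The cleanest route around this is to observe that ``additive'' and ``$\bbF_q$-linear'' are each expressed by polynomial identities with $\bbZ$- (indeed $\bbF_p$-)coefficients, so it suffices to prove the normal form over the universal/generic coefficient ring and specialize; alternatively one reduces to $B$ reduced and uses injectivity of the coefficient map, or simply cites the structure of additive polynomials in characteristic $p$. Since the statement is quoted from Waterhouse, I would keep this verification brief and refer to the cited page for the ring-scheme structure, treating the proof as a matter of unwinding definitions rather than establishing anything genuinely new.
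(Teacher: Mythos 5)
Your proposal is correct, but note that the paper itself offers no proof of this statement at all: it is quoted verbatim from Waterhouse (page 65), and the surrounding text explicitly says these facts are recalled ``without any proofs.'' Your argument --- identifying scheme endomorphisms with polynomials via $\Spec B[t]$, imposing additivity through the comultiplication $t \mapsto t\otimes 1 + 1\otimes t$, imposing $\bbF_q$-linearity, and checking that composition realizes the twisted product $\tau b = b^q \tau$ --- is the standard proof and essentially the one in the cited source. The one point worth tightening is the passage from a field to a general $A$-algebra $B$: the clean fix is simply that nonzero elements of $\bbF_q \subseteq B$ are units, so the binomial-coefficient argument (for $n$ not a $p$-power, some $\binom{n}{j} \not\equiv 0 \bmod p$) and the $\bbF_q$-linearity constraint $a_i(\alpha^{p^i}-\alpha)=0$ both force the vanishing of unwanted coefficients exactly as over a field, with no need for reduction to the reduced or universal case.
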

		
		Finally, we can introduce algebraic Drinfeld modules over any scheme.
		\begin{definition}
			A \textbf{Drinfeld module} of rank $r$ over an $A$-scheme $S$ is a pair
			$(E,\varphi)$ consisting of:
			\begin{itemize}
				\item a $\bbG_a$-bundle $E$ (i.e.\ an additive group scheme) over $S$ such that for all $U=\Spec B$ an affine open subset of $S$ for $B$ an $A$-algebra in the Zariski topology on $S,$ there is an isomorphism $\psi: E|_U\overset{\sim}{\to} \bbG_{a,B}$ of group schemes over $U$
				\item a ring homomorphism $\varphi: A\to \End(E)$
			\end{itemize}
			such that for any family of pairs $(U_i,\psi_i)$ which form a trivializing cover of $E$ (i.e.\ $U_i=\Spec B_i$ are an affine open cover and $\psi_i:E_{\pi^{-1}(U_i)}\overset{\sim}{\to}\bbG_{a,B_i}$ are local isomorphisms of additive group schemes), the morphism $\varphi$ restricts to give maps $\varphi_i:A\to \End(\bbG_{a,B_i})$ of the form $\varphi_i(T)=TX+b_{1,i}X^q+\cdots+b_{r,i}X^{q^r},$ compatible with the transition functions $\psi_{ji}=\psi_i\circ \psi_j^{-1},$ i.e.\ $\varphi_j\circ \psi_{ij}=\psi_{ij}\circ \varphi_i$ on all intersections $U_{ij}=U_i\cap U_j.$
		\end{definition}
		
		\begin{remark}
			In the special case when we consider Drinfeld modules over a field, the algebraic definition of a Drinfeld module is simpler. In particular, we have $E=\bbG_a,$ and we do not need any of the trivializations of our bundle as we are working over a single affine scheme. Therefore, it suffices to provide a ring homomorphism $\varphi: A\to \End(\bbG_a).$ We do not make further explicit use of the algebraic definition of Drinfeld modules in this article beyond the following examples.
		\end{remark}
		
		\begin{example}\cite{Carlitz-class-of-poly}\label{ex: Carlitz module}
			The \textbf{Carlitz module} is the rank $1$ Drinfeld module defined by \[\varphi(T)=TX+X^q,\] and corresponds to the lattice $\overline{\pi}A\subset \Omega.$ Here, $\overline{\pi}\in K_{\infty}(\sqrt[q-1]{-T})$ is the \textbf{Carlitz period}, defined up to a $(q-1)$st root of unity. We fix one such $\overline{\pi}$ once and for all.\\
			
			As an algebraic Drinfeld module, the Carlitz module is the ring homomorphism 
			\begin{align*}
				\varphi: &A\to C\{X^q\}\\
				&T\mapsto TX+X^q 
			\end{align*}
			which is a rank $1$ module since $\deg (TX+X^q)=q=|T|_{\infty}^1,$ over the $A$-scheme $\Spec C.$ 
		\end{example}
		
		\begin{example}
			Let $z\in \Omega,$ and consider the rank $2$ lattice $\Lambda_z=\overline{\pi}(zA+A).$ The associated Drinfeld module of rank $2$ is 
			\begin{align*}
				\varphi^z: &~A\to C\{X^q\}\\
				&~T\mapsto TX+g(z)X^q+\Delta(z)X^{q^2},
			\end{align*}
			where $g$ and $\Delta$ are Drinfeld modular forms of type $0$ and weights $q-1$ and $q^2-1$ respectively. We will define Drinfeld modular forms in the next section. This is analogous to defining an elliptic curve by a short Weierstrass equation whose coefficients are values of Eisenstein series.\\
			
			Here we have written an algebraic Drinfeld module of rank $2$ over an affine $A$-scheme similarly to Example \ref{ex: Carlitz module}. The Carlitz period $\overline{\pi}$ serves to normalize the coefficients of the series expansion of $g$ and $\Delta$ at the cusps of $\GL_2(A)$ so that those coefficients are elements of $A.$
		\end{example}
		
		\subsection{Stacks and Section Rings}
		
		See \cite{Alper-Stacks-and-Moduli} for a general stacks reference; see \cite{VZB} for an excellent and comprehensive reference on computing canonical rings of stacky curves; and see \cite{ODorney-canonical-rings-Q-divisors-on-P1}, \cite{Cerchia-Franklin-ODorney-Qdiv-Ell-curves}, and \cite{Landesman-Ruhm-Zhang-Spin-canonical-rings} for useful generalizations of \cite{VZB} that we sometimes use for the Drinfeld setting. We are most interested in Deligne-Mumford stacks for this work, so some facts and examples will be specialized to that case, but we indicate when this occurs. We also discuss rigid analytic stacks and GAGA for rigid analytic and algebraic stacks, but leave that theory for a later section. \\ 
		
		It is shown in e.g.\ \cite[Corollary $1.4.3$]{Laumon-cohomology-Drinfeld-modular-varieties} that the moduli space of rank $r$ Drinfeld modules over the category of schemes of characteristic $p$ is representable by a Deligne-Mumford algebraic stack of finite type over $\bbF_p.$ 
		One is able to compute the graded rings of global sections of line bundles on stacks which represent the Drinfeld moduli problems by means of geometric invariants with results that are slight variants on the theory in \cite{VZB}. We will follow \cite{VZB} in describing this computation, stating only select facts that we will need.\\
		
		Recall from \cite[Sections $5.2$ and $5.3$]{VZB}, a \textbf{stacky curve} $\sX$ over a field $\bbK$ is a smooth, proper, geometrically connected Deligne-Mumford stack of dimension $1$ over $\bbK$ that contains a dense open subscheme. We need no assumptions about this base field to define a stacky curve. Every stacky curve $\sX$ over a field $\bbK$ has a \textbf{coarse space morphism} $\pi:\sX\to X$ with $X$ a smooth scheme over $\bbK$ (called the coarse space), and this map $\pi$ is unique up to unique isomorphism. Indeed, $\pi$ is universal for morphisms from $\sX$ to schemes. What is more, for any algebraically closed field $F$ containing $\bbK,$ the set of isomorphism classes of $F$-points on $\sX$ and the $F$-points of $X$ are in bijection. Note that \'etale locally on the coarse space $X,$ a stacky curve $\sX$ is the quotient of an affine scheme by a finite (constant) group $G\leq \Aut(X).$\\
		
		A \textbf{point} of a stack $\sX$ is a map $\Spec F\to \sX$ for $F$ some field, and to a point $x,$ we associate its stabilizer $G_x\overset{def}{=}\underline{\Isom}(x,x),$ a functor which is a representable by an algebraic space. If $G_x$ is a finite group scheme, say that $\sX$ is \textbf{tame} if $\deg G_x$ is not divisible by $\operatorname{char}(F)$ for any $x\in \sX.$ We say a point $x$ with $G_x\neq \{1\}$ is a \textbf{stacky point}.\\
		
		Continuing the notation of the last paragraphs, let $\pi:\sX\to X$ be a coarse space morphism. A \textbf{Weil divisor} is a finite formal sum of irreducible closed substacks of codimension $1$ over $\bbK.$ On a smooth Deligne-Mumford stack, every Weil divisor is Cartier. Any line bundle $\sL$ on $\sX$ is isomorphic to $\cO_{\sX}(D)$ for some Cartier divisor $D.$ Finally, there is an isomorphism of sheaves on the Zariski site of $X:$
		\[ \cO_X(\lfloor D \rfloor)\overset{\sim}{\to}\pi_*\cO_{\sX}(D), \]
		where \[\lfloor D\rfloor=\Big\lfloor\sum_ia_iP_i\Big\rfloor\overset{def}{=}\sum_i\Big\lfloor\frac{a_i}{\#G_{P_i}}\Big\rfloor\pi(P_i).\]
		
		\begin{example}\label{ex: sheaf of differentials}
			Let $f:\sX\to \sY$ be a morphism of stacky curves with coarse spaces $X$ and $Y=\Spec k$ for $k$ some field respectively. The \textbf{sheaf of differentials} $\Omega^1_{\sX}=\Omega^1_{\sX/\Spec k}$ is the sheafification (see \cite[Section $2.2.3$]{Alper-Stacks-and-Moduli} for sheafification) of the presheaf on $\sX_{\text{\'et}},$ the small \'etale site on $\sX$ (i.e.\ the category of schemes which are \'etale over $\sX$) given by 
			\[ (U\to \sX)\mapsto \Omega^1_{\cO_{\sX}(U)/f^{-1}\cO_{\sY}(U)},\]
			where $\cO_{\sX}$ and $\cO_{\sY}$ denote the structure sheaves on $\sX$ and $\sY$ respectively. See e.g.\ \cite[Example $4.1.2$]{Alper-Stacks-and-Moduli} for more details on structure sheaves for Deligne-Mumford stacks. 
		\end{example}
		
		Every smooth, projective curve $X$ may be treated as a stacky curve with nothing stacky about it. On the other hand the stack quotient $[X/G]$ for a finite group $G\leq \Aut(X)$ is a stacky curve. We know from e.g.\ \cite[Remark $5.2.8$]{VZB} that Zariski locally, every stacky curve is the quotient of a smooth, affine curve by a finite group, so locally, stacky curves have a quotient description $[X/G]$ as above. Recall from \cite[Lemma $5.3.10.(b)$]{VZB} that the stabilizer groups of a tame stacky curve are isomorphic to the group of roots of unity $\mu_n$ for some $n.$ In order to discuss Drinfeld moduli stacks, we introduce two more stacky notions.\\
		
		Let us consider $\sX$ a stacky curve as above and $x:\Spec \bbK\to\sX$ a point on $\sX$ with stabilizer $G_x.$ A \textbf{residue gerbe} at $x$ is the unique monomorphism $G_x\hookrightarrow \sX$ through which $x$ factors. As in \cite{VZB} we treat residue gerbes as fractional points on a stacky curve. We will say a \textbf{gerbe} over the stacky curve $\sX$ is a smooth, proper, geometrically connected Deligne-Mumford stack of dimension $1$ where every point has a stabilizer containing some nontrivial group. Note that a gerbe is almost a stacky curve, except that it does not contain a dense open subscheme and indeed, each point is fractional in the sense above. Let $\sX$ denote a geometrically integral Deligne-Mumford stack of relative dimension $1$ over a base scheme $S$ whose generic point has stabilizer $\mu_n$ for some $n.$ There exists a stack, denoted $\sX/\!/\mu_n,$ called the \textbf{rigidification} of $\sX,$ and a factorization 
		\[\sX\overset{\pi}{\to}\sX/\!/\mu_n\to S\] 
		such that $\pi$ is a $\mu_n$-gerbe (i.e.\ for each point $x$ of $\sX,$ the stabilizer $G_x$ contains $\mu_n$) and the stabilizer of any point in $\sX/\!/\mu_n$ is the quotient of the stabilizer of the corresponding point in $\sX$ by $\mu_n.$
		
		\begin{remark}
			In the factorization above, since $\pi$ is a gerbe and furthermore is \'etale, the sheaf of relative differentials $\sX\to \sX/\!/\mu_n$ is $0,$ i.e.\ the gerbe does not affect sections of relative differentials (over the base scheme), nor the canonical ring which we define for stacks below. In particular, we can identify canonical divisors $K_{\sX}\sim \pi^*K_{\sX/\!/\mu_n},$ and the corresponding canonical rings are isomorphic. 
		\end{remark}
		
		In particular, we treat seriously the stackiness of the moduli when we compute the following homogeneous coordinate rings on Drinfeld modular curves.  
		\begin{definition}
			Let $\sX$ be a stacky curve over a field $k$ and let $\sL$ be an inveritble sheaf on $\sX.$ The \textbf{section ring} of $\sL$ on $\sX$ is the ring 
			\[R(\sX,\sL)=\bigoplus_{d\geq 0}H^0(\sX,\sL^{\otimes d}).\]
			If $\sL\cong \cO_{\sX}(D)$ for some Weil divisor $D$ in particular, we can equivalently write 
			\[R_D=\bigoplus_{d\geq 0}H^0(\sX,dD).
			\]
		\end{definition}		
		
		Finally, for readability of our main results, we introduce some terminology inspired by \cite[Definition $5.6.2$]{VZB} and \cite[Proposition $5.5.6$]{VZB}. Let $\sX$ be a tame stacky curve over an algebraically closed field $\bbK$ with coarse space $X.$ A Weil divisor $\Delta$ on $\sX$ is a \textbf{log divisor} if $\Delta=\sum_i P_i$ is an effective divisor given as a sum of distinct points (stacky or otherwise) on $\sX.$ We have slightly generalized Voight and Zureick-Brown's notion of log divisor for a reason which we discuss in Sections \ref{s: Drinfeld modular curves} and \ref{s: Proof of main thm}. This generalization only means more involved calculations, and at worst less aesthetic results, when computing log canonical rings, which is why in \cite[Remark $5.4.6$]{VZB} the authors restrict their log divisors more than in this article.\\ 
		
		By \cite[Proposition $5.5.6$]{VZB} if $K_{\sX}$ and $K_X$ are canonical divisors on a stacky curve $\sX$ and its coarse space $X$ respectively, then there is a linear equivalence
		\[K_{\sX}\sim K_X+R=K_X+\sum_x \left(\deg G_x-1\right)x,\]
		where $G_x$ is the stabilizer of a closed substack $x\in \sX,$ and the sum above is taken over closed substacks of $\sX$ of codimension $1.$ Finally, a \textbf{log canonical divisor} on a stacky curve $\sX$ has form $K_{\sX}+\Delta,$ where $K_{\sX}$ is a canonical divisor, and $\Delta$ is a log divisor on $\sX.$

		\section{Drinfeld Modular Forms}
		
		In this section we introduce Drinfeld modular forms. The technical conditions of the rigid analytic space in which we work makes it necessary to introduce some facts about the projective line $\bbP^1(C)$ before we begin in earnest on a study of modular forms. We discuss rigid analytic spaces in more detail in the following sections.
		
		\begin{definition}\label{d: parameter at infty}
			Let $\overline{\pi}\in K_{\infty}(\sqrt[q-1]{-T})$ be a fixed choice of the Carlitz period (recall Example \ref{ex: Carlitz module}). We define a \textbf{parameter at infinity} 
			\[u(z)\overset{def}{=}\frac{1}{e_{\overline{\pi}A}(\bar{\pi}z)}=\frac{1}{\bar{\pi}e_A(z)}=\bar{\pi}^{-1}\sum_{a\in A}\frac{1}{z+a}.\]
		\end{definition}
		\begin{remark}
			In the Drinfeld setting, $\overline{\pi}$ plays the role of the constant $2\pi i\in \bbC$ in the parameter $q=e^{2\pi i z}$ at infinity from the classical setting. That is, it is a normalization  factor so that the series expansion coefficients for certain generating modular forms at cusps are elements of $A.$ 
		\end{remark}
		
		One fact about this parameter which we will use later in our consideration of modular forms is the following. 
		\begin{lemma}\cite[Page $494$]{Gekeler-survey-Drinfeld-modular-forms}\label{l: u(a/d)=d/au}
			For each $\alpha\in \bbF_q^{\times}$ we have $\displaystyle{u\left(\alpha z\right)=\alpha^{-1}u(z)}.$
		\end{lemma}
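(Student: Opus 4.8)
The plan is to deduce the identity from the $\bbF_q$-linearity of the exponential function, a property recorded when we introduced $e_{\Lambda}$ in the analytic description of Drinfeld modules. Concretely, I would work with the middle expression $u(z)=1/(\overline{\pi}\,e_A(z))$ from Definition \ref{d: parameter at infty}, so that it suffices to understand how $e_A$ transforms under the substitution $z\mapsto \alpha z$ for $\alpha\in\bbF_q^{\times}$.

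First I would invoke the fact that $e_A$ is $\bbF_q$-linear: since $\alpha\in\bbF_q^{\times}\subset\bbF_q$, we have $e_A(\alpha z)=\alpha\,e_A(z)$. Substituting into the definition gives
\[
u(\alpha z)=\frac{1}{\overline{\pi}\,e_A(\alpha z)}=\frac{1}{\overline{\pi}\,\alpha\,e_A(z)}=\alpha^{-1}\,\frac{1}{\overline{\pi}\,e_A(z)}=\alpha^{-1}u(z),
\]
which is exactly the claim. This is the whole argument; once $\bbF_q$-linearity is in hand, no convergence or analytic subtlety is needed.

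Alternatively, and as a sanity check, I would verify the identity directly from the series expression $u(z)=\overline{\pi}^{-1}\sum_{a\in A}(z+a)^{-1}$. Writing $u(\alpha z)=\overline{\pi}^{-1}\sum_{a\in A}(\alpha z+a)^{-1}$ and using that multiplication by the unit $\alpha$ is a bijection of $A=\bbF_q[T]$ onto itself, I would reindex via $a=\alpha b$ to obtain $\sum_{a\in A}(\alpha z+a)^{-1}=\sum_{b\in A}(\alpha z+\alpha b)^{-1}=\alpha^{-1}\sum_{b\in A}(z+b)^{-1}$, whence $u(\alpha z)=\alpha^{-1}u(z)$ once more. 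The only step demanding a word of justification is the legitimacy of the reindexing, which holds because the sum converges in the non-archimedean field $C$, so its value is independent of the ordering of the terms. The main (and quite mild) obstacle is therefore purely bookkeeping — confirming that $a\mapsto\alpha a$ permutes $A$ and that rearranging the convergent sum is valid — rather than anything substantive.
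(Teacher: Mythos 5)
Your first argument is exactly the paper's proof: both use the $\bbF_q$-linearity of $e_A$ to get $e_A(\alpha z)=\alpha e_A(z)$ and then substitute into $u(z)=1/(\overline{\pi}e_A(z))$. The series-reindexing sanity check is a fine bonus but adds nothing beyond what the linearity argument already establishes.
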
	
		\begin{proof}
			Since the exponential function $e_A$ is an $\bbF_q$-linear power series we know $e_A(\alpha z)=\alpha e_A(z),$ so $\displaystyle{u(\alpha z)=\frac{1}{\bar{\pi}e_A(\alpha z)}=\frac{1}{\bar{\pi}\alpha e_A(z)}=\alpha^{-1}u(z)}.$
			
			
		\end{proof}
		
		Now we are able to define a fundamental object of study for this article. 
		
		\begin{definition}\cite[Definition $(3.1)$]{Gekeler-Curves}
		\label{def: Drinfeld modular form}
			Let $\Gamma\leq \GL_2(A)$ be a congruence subgroup. A \textbf{modular form} of \textbf{weight} $k\in \bbZ_{\geq0}$ and \textbf{type} $l\in \bbZ/((q-1) \bbZ)$ for $\Gamma$ is a holomorphic function $f:\Omega\to C$ such that 
			\begin{enumerate}
				\item $f(\gamma z)=\det(\gamma)^{-l}(cz+d)^kf(z)$ for all $\displaystyle{\gamma=\begin{psmallmatrix}a&b\\c&b\end{psmallmatrix}\in \Gamma},$ and
				\item $f$ is holomorphic at the cusps of $\Gamma.$
			\end{enumerate}
			If such an $f$ satisfies only condition $(1),$ we say $f$ is simply \textbf{weakly modular} (of weight $k,$ type $l,$ for $\Gamma$).
		\end{definition}
		
		\begin{remark}
			There are several interpretations of the second condition when $\Gamma$ has a single cusp, or generally about the condition of holomorphy at the cusp $\infty$:
			\begin{enumerate}
				\item \cite[$(2.2.\mathrm{iii})$]{Gekeler-Invariants} The condition is equivalent to $f$ being bounded on $\{z\in \Omega:|z|_{\infty}\geq 1\},$ where $|\cdot|_{\infty}$ is the $\infty$-adic absolute value; 
				\item \cite[Definition $3.5.(\mathrm{iii})$]{Gekeler-survey-Drinfeld-modular-forms} $f$ has a series expansion at cusps: 
				\[f(z)=\sum_{n\in \bbZ}a_nu(z)^n, ~a_n\in C,\]
				where $u$ is the parameter at $\infty,$ with a positive radius of convergence. The second condition means that $a_n=0$ for all $n<0.$
			\end{enumerate}
		\end{remark}
		
		\begin{remark}
			The observation from \cite[Definition $(5.7)$]{Gekeler-Coeff} that if $f$ is Drinfeld modular form, then $f(z+b)=f(z)$ for any $b\in A$ means that although not literally a Fourier series, the series expansion of a modular form at the cusps of some congruence subgroup is the Drinfeld setting equivalent to a Fourier series.   
		\end{remark}
		
		We introduce some terminology and notation respectively in the next definition.
		
		\begin{definition}
			Write $M_{k,l}(\Gamma)$ for the finite-dimensional $C$-vector space of Drinfeld modular forms for $\Gamma\leq \GL_2(A)$ with weight $k$ and type $l.$ The graded ring $M(\Gamma)$ of modular forms is 
			\[M(\Gamma)=\bigoplus_{\substack{k\geq 0\\l\Mod{q-1}}} M_{k,l}(\Gamma)\]
			since $M_{k,l}\cdot M_{k',l'}\subset M_{k+k',l+l'}.$
		\end{definition}
		
		Now we can introduce some non-trivial facts about Drinfeld modular forms.
		
		\begin{lemma}\cite[Remark $5.8.\mathrm{iii}$]{Gekeler-Coeff}\label{l: u-series coeffs determine form}
			If $f(z)\in M_{k,l}(\Gamma)$ has a $u$-series expansion $f(z)=\sum_{n\geq 0}a_nu^n,$ then the coefficients $a_i$ uniquely determine $f.$
		\end{lemma}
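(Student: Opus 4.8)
The plan is to prove that the $C$-linear map sending a form to its sequence of $u$-series coefficients is injective, which amounts to showing that a form whose $u$-expansion vanishes identically must be the zero function. First I would reduce to this vanishing statement: given $f,g\in M_{k,l}(\Gamma)$ with identical $u$-series, their difference $h=f-g$ again lies in $M_{k,l}(\Gamma)$, since $M_{k,l}(\Gamma)$ is a $C$-vector space, and the $u$-series of $h$ has all coefficients equal to $0$. It therefore suffices to show that such an $h$ is identically zero on $\Omega$.

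Next I would exploit the convergence built into holomorphy at the cusp. By the second defining condition of a modular form together with the series interpretation recorded in the remark following Definition~\ref{def: Drinfeld modular form}, the expansion $h(z)=\sum_{n\geq 0}a_nu(z)^n$ has positive radius of convergence in the parameter $u$. Consequently there is a real number $r$ for which $h$ agrees with this convergent series on the admissible open set $U=\{z\in \Omega: |z|_{\infty}\geq r\}$, a neighborhood of the cusp $\infty$ on which $|u(z)|_{\infty}$ is small. Since every coefficient $a_n$ is zero, $h$ vanishes identically on $U$.

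The final step is to propagate this vanishing across the whole domain by analytic continuation. The space $\Omega=C-K_{\infty}$ is connected as a rigid analytic space, and $U$ is a non-empty admissible open subset of it; because $h$ is rigid analytic (holomorphic in the sense of \cite[Definition~$2.2.1$]{Frensel-vanderPut-Rigid-Analytic_Geom}), the identity principle for rigid analytic functions on a connected space forces $h\equiv 0$ on all of $\Omega$, giving $f=g$.

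I expect the main obstacle to be this last step: the identity principle in rigid analytic geometry is more delicate than its archimedean counterpart because of the Grothendieck topology of admissible opens and covers. I would need to confirm (or cite) both the connectedness of $\Omega$ and a version of analytic continuation valid on it; the standard development of the rigid analytic upper half-plane supplies these, so that once the convergence neighborhood $U$ is in hand, the vanishing of $h$ spreads to all of $\Omega$ without further input. The reduction and convergence steps are routine, so essentially all of the content lives in justifying that a rigid analytic function vanishing near one cusp vanishes everywhere.
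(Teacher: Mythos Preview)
The paper does not actually supply a proof of this lemma: it is stated with a citation to \cite[Remark $5.8.\mathrm{iii}$]{Gekeler-Coeff} and no argument is given in the text. So there is nothing in the paper to compare your proposal against beyond the bare reference.

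Your outline is the standard one and is correct. The reduction to a form $h$ with identically zero $u$-expansion and the observation that $h$ then vanishes on an admissible open neighbourhood of the cusp are routine. Your caution about the final step is well placed but not an obstruction: $\Omega$ is a smooth, reduced, connected rigid analytic space over $C$ (indeed it carries an admissible affinoid covering exhibiting it as a connected Stein space), and on such a space the zero locus of a nonzero rigid analytic function is a proper closed analytic subset, hence cannot contain a nonempty admissible open. Equivalently, one can argue via the projective modular curve: $h(z)(dz)^{\otimes k/2}$ descends to a section of a line bundle on the coarse curve $X_\Gamma$, and a global section on a projective curve vanishing on a nonempty open vanishes everywhere by ordinary algebraic geometry. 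Either route closes the gap you anticipated.
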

		
		The weight and type of Drinfeld modular forms are not independent. 
		
		\begin{lemma}\cite[Remark $(5.8.\mathrm{i})$]{Gekeler-Coeff}\label{l: weight-type}
			If $M_{k,l}(\Gamma)\neq 0,$ then $k\equiv 2l\pmod{q-1}.$
		\end{lemma}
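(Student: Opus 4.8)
The plan is to exploit the scalar matrices $\begin{psmallmatrix}\alpha&0\\0&\alpha\end{psmallmatrix}$ for $\alpha\in\bbF_q^\times$, which act trivially on $\Omega$ by M\"obius transformations since $(\alpha z)/\alpha=z$, and then to read off the resulting constraint directly from the automorphy factor in condition $(1)$ of Definition \ref{def: Drinfeld modular form}. The whole argument amounts to comparing the two sides of the transformation law for a single well-chosen element of the group.

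First I would record that each such scalar matrix lies in $\Gamma$, since by standing hypothesis $\Gamma$ contains the diagonal matrices of $\GL_2(A)$ and in particular the center $\bbF_q^\times\cdot\begin{psmallmatrix}1&0\\0&1\end{psmallmatrix}$. Fix $\gamma=\begin{psmallmatrix}\alpha&0\\0&\alpha\end{psmallmatrix}$, so that $c=0$, $d=\alpha$, and $\det(\gamma)=\alpha^2$. Because $\gamma z=z$ for every $z\in\Omega$, the left-hand side of the transformation law is simply $f(\gamma z)=f(z)$.

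Next I would substitute into condition $(1)$. The right-hand side becomes $\det(\gamma)^{-l}(cz+d)^kf(z)=(\alpha^2)^{-l}\alpha^k f(z)=\alpha^{k-2l}f(z)$, so comparing the two expressions yields $f(z)=\alpha^{k-2l}f(z)$ for every $z\in\Omega$ and every $\alpha\in\bbF_q^\times$. Since $M_{k,l}(\Gamma)\neq 0$, I may pick a nonzero $f$ and a point $z_0$ with $f(z_0)\neq 0$; dividing gives $\alpha^{k-2l}=1$ for all $\alpha\in\bbF_q^\times$. As $\bbF_q^\times$ is cyclic of order $q-1$, evaluating at a generator forces $(q-1)\mid(k-2l)$, i.e.\ $k\equiv 2l\pmod{q-1}$, as claimed.

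The argument is short, so there is no serious computational obstacle; the only genuine subtlety lies in the input that $\Gamma$ contains the full center $\bbF_q^\times I$, which is precisely what guarantees the congruence holds modulo all of $q-1$ rather than modulo a proper divisor. If one only knew that $\Gamma$ contained the scalars $\alpha I$ for $\alpha$ ranging over a proper subgroup $S\leq\bbF_q^\times$, the identical computation would yield only the weaker congruence $k\equiv 2l\pmod{|S|}$. An alternative route avoiding any hypothesis on the center would instead combine the $u$-expansion $f(z)=\sum_{n\geq 0}a_nu(z)^n$ with the scaling relation $u(\alpha z)=\alpha^{-1}u(z)$ of Lemma \ref{l: u(a/d)=d/au}, but the scalar-matrix argument is the most transparent and is sufficient in every setting relevant to this paper.
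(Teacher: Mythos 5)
Your argument is correct and is essentially the standard proof, but it takes a genuinely different route from the paper, because the paper offers no argument at all for this lemma: its ``proof'' is the single line ``See \cite[Remark $(5.8.\mathrm{iii})$]{Gekeler-Coeff}'' (note it even cites part (iii) in the proof versus part (i) in the statement). Your scalar-matrix computation is exactly the content of Gekeler's remark, and indeed the paper itself reruns the identical computation later, in the proof of Proposition \ref{p: generator trick}, where it deduces $\alpha^{k-2l}=1$ from $\begin{psmallmatrix}\alpha&0\\0&\alpha\end{psmallmatrix}\in\Gamma_2$. What your self-contained proof buys is that it exposes the hypothesis the lemma silently needs: the congruence $k\equiv 2l\pmod{q-1}$ requires the full center $\bbF_q^\times I$ to lie in $\Gamma$, whereas the lemma is stated for an arbitrary congruence subgroup. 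This caveat of yours is not cosmetic: for $\Gamma=\Gamma(N)$ every element has determinant $1$, so condition $(1)$ of Definition \ref{def: Drinfeld modular form} does not see $l$ at all, one has $M_{k,l}(\Gamma(N))=M_{k,l'}(\Gamma(N))$ for all $l,l'$, and the conclusion fails; Gekeler's remark is made for $\GL_2(A)$, where the center is present. So your proof is valid precisely under the hypothesis you isolate, which is also the hypothesis in force everywhere the paper actually uses the lemma (for $\Gamma$ containing the diagonal matrices, or for $\Gamma_2$, which contains every $\alpha I$ since $\det(\alpha I)=\alpha^2$ is a square). One small correction to your closing remark: the proposed ``alternative route'' via $u$-expansions and Lemma \ref{l: u(a/d)=d/au} does not avoid hypotheses on $\Gamma$, because relating $f(\alpha z)$ to $f(z)$ already requires $\begin{psmallmatrix}\alpha&0\\0&1\end{psmallmatrix}\in\Gamma$ --- a diagonal-matrix assumption that is stronger than, not weaker than, containing the center.
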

		\begin{proof}
			See \cite[Remark $(5.8.\mathrm{iii})$]{Gekeler-Coeff}
		\end{proof}
		
		\begin{example}
			Some famous Drinfeld modular forms are the $\GL_2(A)$-forms: $g$ of weight $q-1$ and type $0, ~\Delta$ of weight $q^2-1$ and type $0,$ and $h$ of weight $q+1$ and type $1.$ We know from Goss and Gekeler respectively, see for example \cite[Theorem $(3.12)$]{Gekeler-survey-Drinfeld-modular-forms}, that 
			\[\bigoplus_{k\geq 0} M_{k,0}(\GL_2(A))=C[g,\Delta] \quad \text{ and }\quad \bigoplus_{\substack{k\geq 0\\l\Mod{q-1}}} M_{k,l}(\GL_2(A))=C[g,h].\]
		\end{example}
		
		\begin{example}\cite[Section $8$]{Gekeler-Coeff}
		\label{example: Eisenstein series for Gamma0(T)}
			The function \[E(z)\overset{def}{=}\overline{\pi}^{-1}\sum\limits_{\substack{a\in A\\\text{monic}}}\left(\sum_{b\in A}\frac{a}{az+b}\right)\]
			is an analog to an Eisenstein series of weight $2$ over $\bbQ,$ and we can define a Drinfeld modular form 
			\[E_T(z)\overset{def}{=}E(z)-TE(Tz)\]
			of weight $2$ and type $1$ for $\Gamma_0(T),$ the congruence subgroup of $\GL_2(A)$ containing matrices $\displaystyle{\begin{psmallmatrix}a&b\\c&d\end{psmallmatrix}}$ with $c\equiv 0\Mod T.$
		\end{example}
		
		\section{Drinfeld Modular Curves}
		\label{s: Drinfeld modular curves}
		
		%
		Let us consider the moduli space of rank $2$ Drinfeld modules, first as a rigid analytic space, then as moduli schemes, and finally as log stacky curves. We recall some definitions we need to discuss rigid anaytic spaces, which are a natural means to discuss the affine Drinfeld modular curves as quotients of the Drinfeld ``upper half-plane'' $\Omega$ by congruence subgroups. A more thorough treatment and reference for rigid analytic geometry is \cite{Frensel-vanderPut-Rigid-Analytic_Geom}. We will specialize to rigid analytic spaces over $C$ for readability.\\
		
		We need the following intermediate definitions to define a rigid analytic space. 
		\begin{definition}
			Let $z_1,\cdots, z_n$ denote some variables and let $\bbK$ denote a non-archemedean, valued field. Let $T_n=\bbK\langle z_1,\cdots, z_n\rangle$ be the $n$-dimensional $\bbK$-algebra which is the subring of the ring of formal power series $\bbK[\![z_1,\cdots, z_n]\!]$ 
			\[T_n=\left\{\sum_{\alpha} c_{\alpha}z_1^{\alpha_1}\cdots z_n^{\alpha_n}\in \bbK[\![z_1,\cdots, z_n]\!] :\lim c_{\alpha}=0\right\},\] 
			where $\alpha = (\alpha_1,\cdots,\alpha_n).$ An \textbf{affinoid algebra} $A$ over $\bbK$ is a $\bbK$-algebra which is a finite extension of $T_n$ for some $n\geq 0.$
		\end{definition} 
		
		Associated to an affinoid algebra $A$ over a field $\bbK$ is a corresponding \textbf{affinoid space} $\operatorname{Sp}(A),$ the set of its maximal ideals. 
		
		\begin{definition}\cite[Definition $2.4.1$]{Frensel-vanderPut-Rigid-Analytic_Geom}
			Let $X$ be a set. A \textbf{G-topology} on $X$ consists of the data:
			\begin{enumerate}
				\item A family $\sF$ of subsets of $X$ such that $\emptyset, X\in \sF$ and if $U,V\in \sF,$ then $U\cap V\in \sF,$ and 
				\item For each $U\in \sF$ a set $\operatorname{Cov}(U)$ of coverings of $U$ by elements of $\sF$
			\end{enumerate}
			such that the following conditions are met:
			\begin{itemize}
				\item $\{U\}\in \operatorname{Cov}(U)$
				\item For each $\cU,V\in \sF$ with $V\subset \cU$ and $\cU\in \operatorname{Cov}(U),$ the covering $\displaystyle{\cU\cap V\overset{def}{=}\{U'\cap V: U'\in \cU\}}$ belongs to $\operatorname{Cov}(V)$
				\item Let $U\in \sF,$ let $\{U_i\}_{i\in I}\in \operatorname{Cov}(U)$ and let $\cU_i\in \operatorname{Cov}(U_i).$ The union \[\bigcup_{i\in I} \cU_i\overset{def}{=}\{U':U'\text{ belongs to some }\cU_i \}\] is an element of $\operatorname{Cov}(U).$
			\end{itemize}
			We say the $U\in \sF$ are \textbf{admissible sets} and the elements of $\operatorname{Cov}(U)$ are \textbf{admissible coverings}.
		\end{definition}
		
		Finally, we come to the point:
		\begin{definition}\cite[Definition $4.3.1$]{Frensel-vanderPut-Rigid-Analytic_Geom}
			A \textbf{rigid analytic space} is a triple $(X,T_X,\cO_X)$ consisting of a set $X,$ a $G$-topology $T_X$ on $X$ and a structure sheaf of $C$-algebras $\cO_X$ on $X$ for which there exists an admissible open covering $\{X_i\}$ of $X$ such that each $(X_i,T_{X_i},\cO_{X_i})$ is an affinoid over $C$ and $U\subset X$ belongs to $T_X$ if and only if $U\cap X_i$ belongs to $T_X$ for each $i.$ 
		\end{definition}
		
		For the well-definedness of Drinfeld modular curves, we recall some analytic properties of $\Omega.$
		Since $\Omega=\bbP^1(C)-\bbP^1(K_{\infty}),$ and $\bbP^1(K_{\infty})$ is compact in the rigid analytic topology, we know from \cite[Section $1.2$]{Gekeler-jacobians} that $\Omega$ is a rigid analytic space. The action by  a congruence subgroup $\Gamma\leq \GL_2(A)$ on $\Omega$ by M\"obius transformations has finite stabilizer for each $z\in \Omega,$ and as in \cite[Section  $(2.2)$]{Gekeler-jacobians}, $\Gamma\setminus \Omega$ is a rigid analytic space.\\
		
		Recall that for any scheme $S$ of locally finite type over a complete, non-archimedean field of finite characteristic $p,$ there is a rigid analytic space $S^{\text{an}}$ whose points coincide with those of $S$ as sets. In fact, there is an \textbf{analytification functor} from the category of schemes over $C$ to the category of rigid analytic spaces, so if $X$ is a smooth algebraic curve over $C,$ then there is a rigid analytic space $X^{\text{an}}$ whose points are in bijection with the $C$-points of $X.$ 
		
		\begin{theorem}\cite{Drinfeld-elliptic-modules}
			There exists a smooth, irreducible, affine algebraic curve $Y_{\Gamma}$ over $C$ such that $\Gamma\setminus \Omega$ and the underlying (rigid) analytic space $Y_{\Gamma}^{\text{an}}$ of $Y_{\Gamma}$ are canonically isomorphic as rigid analytic spaces over $C.$ 
		\end{theorem}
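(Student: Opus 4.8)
The plan is to recognize $\Gamma\setminus\Omega$ as an analytic moduli space of rank $2$ Drinfeld modules and to transport the known algebraic representability of that moduli problem across the analytification functor. First I would make the moduli interpretation precise: to each $z\in\Omega$ associate the rank $2$ lattice $\Lambda_z=\overline{\pi}(zA+A)$ and hence, by the analytic construction recalled in the previous section, the rank $2$ Drinfeld module $\varphi^z$ with $\varphi^z(T)=TX+g(z)X^q+\Delta(z)X^{q^2}$. Two points of $\Omega$ give isomorphic Drinfeld modules, with the level structure cut out by $\Gamma$, precisely when they lie in the same $\Gamma$-orbit, so that $\Gamma\setminus\Omega$ is, as a set, the set of isomorphism classes of rank $2$ Drinfeld modules over $C$ equipped with $\Gamma$-level structure.

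Next I would invoke the algebraic side. By the representability of the Drinfeld moduli problem (whose stacky incarnation is recorded in the cited work of Laumon), the functor of rank $2$ Drinfeld modules with $\Gamma$-level structure is coarsely represented over $C$ by a scheme, and I would take $Y_{\Gamma}$ to be this coarse moduli scheme base-changed to $C$. Smoothness of $Y_{\Gamma}$ follows from the deformation theory of Drinfeld modules, the moduli problem being unobstructed in relative dimension $1$; affineness is built in because $Y_{\Gamma}$ parametrizes honest Drinfeld modules and omits the cusps, which are boundary points added only in a compactification. The analytic uniformization theorem, namely that every rank $2$ Drinfeld module over $C$ arises from a lattice, then furnishes a bijection between the $C$-points of $Y_{\Gamma}$ and the points of $\Gamma\setminus\Omega$, compatible with the level structures on both sides.

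Finally I would upgrade this bijection of points to a canonical isomorphism of rigid analytic spaces. The assignment $z\mapsto\varphi^z$ is holomorphic in $z$ because its coefficients $g$ and $\Delta$ are holomorphic functions on $\Omega$; this produces an analytic morphism $\Gamma\setminus\Omega\to Y_{\Gamma}^{\text{an}}$, which one checks is an isomorphism by comparing local charts, the map being étale and bijective away from the finitely many points with nontrivial stabilizer and the finite stabilizer groups matching at the remaining points. Irreducibility of $Y_{\Gamma}$ then follows from the connectedness of $\Omega$, which is the complement of the closed set $\bbP^1(K_{\infty})$ in $\bbP^1(C)$, and hence of its quotient. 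The main obstacle is precisely this last comparison: producing an analytic inverse and verifying that the analytic family $\varphi^z$ is genuinely the analytification of the universal object over $Y_{\Gamma}$. This step rests on Drinfeld's uniformization theorem together with a rigid analytic GAGA principle, and it is where the real work lies; the remaining assertions of smoothness, affineness, and irreducibility are comparatively formal once the dictionary between lattices and algebraic Drinfeld modules over $C$ is in hand.
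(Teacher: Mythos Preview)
The paper does not supply its own proof of this theorem: the statement is attributed directly to Drinfeld's original paper \cite{Drinfeld-elliptic-modules} and is quoted as a known result, with no argument given beyond the citation. There is therefore nothing in the paper to compare your proposal against.

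That said, your outline is essentially the standard route to Drinfeld's result and is broadly correct in spirit. A few points deserve care if you were to carry it out. First, for an arbitrary congruence subgroup $\Gamma$ the moduli problem need not be fine, so your appeal to ``the universal object over $Y_{\Gamma}$'' in the last paragraph is not quite right; one works instead with a fine level $\Gamma(N)\subseteq\Gamma$, where the moduli problem is representable by a scheme, and then takes the quotient by the finite group $\Gamma/\Gamma(N)$ on both the algebraic and analytic sides. Second, the step you flag as the real obstacle---producing the analytic isomorphism and not merely a bijection on $C$-points---is indeed where the content lies, and it is exactly what Drinfeld proves; your sketch correctly identifies that this rests on the equivalence between lattices and Drinfeld modules over $C$ together with the holomorphy of the coefficients $g,\Delta$, but the verification that the resulting map is an isomorphism of rigid spaces (not just a bijection) requires more than ``\'etale and bijective away from finitely many points,'' since rigid-analytic isomorphisms are not detected pointwise. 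The cleanest way is to exhibit the map locally in admissible charts and check it on affinoids, which is what Drinfeld does.
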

		\begin{remark}
			This underlying rigid analytic space is the \textbf{analytification} (see \cite[Example $4.3.3$]{Frensel-vanderPut-Rigid-Analytic_Geom}) of $Y_{\Gamma}.$ 
		\end{remark}
		
		\begin{definition}
			We call the affine curves $Y_{\Gamma}$ whose analytification $Y_{\Gamma}^{\text{an}}$ are isomorphic to $\Gamma\setminus \Omega$ as rigid analytic spaces over $C$ affine \textbf{Drinfeld modular curves.} Since $Y_{\Gamma}$ is smooth and affine, it admits a smooth projective model which $X_{\Gamma}$ which is the projective Drinfeld modular curve. 
		\end{definition}
		
		
		\begin{remark}
			In the spirit of \cite[Section $6.2$]{VZB}, we say a projective Drinfeld modular curve $X_{\Gamma}$ is the \textbf{algebraization} of some rigid analytic space $\Gamma\setminus (\Omega\cup \bbP^1(K))=X_{\Gamma}^{\text{an}},$ whose points are in bijection with the $C$-points of the projective Drinfeld modular curve $X_{\Gamma}.$ 
		\end{remark}
		
		Let $X_{\Gamma}^{\text{an}}\overset{def}{=}\Gamma\setminus(\Omega\cup \bbP^1(K))$ denote a rigid analytic, projective Drinfeld modular curve for some congruence subgroup $\Gamma\leq \GL_2(A).$
		Let $X_{\Gamma}=(X_{\Gamma}^{\text{an}})^{\text{alg}}$ denote the corresponding algebraic Drinfeld modular curve whose $C$-points are in bijection with $X_{\Gamma}^{\text{an}}.$ This modular curve is not a stacky curve since there is a uniform $\mu_{q-1}$ stabilizer which we know from the moduli interpretation - each point is fixed by $\displaystyle{Z(GL_2(A))=\{\begin{psmallmatrix}\alpha&0\\0&\alpha\end{psmallmatrix}:\alpha\in \bbF_q^{\times}\}\cong \bbF_q^{\times}}.$ However, as a scheme, $X_{\Gamma}$ is the coarse space of a stacky curve $\sX_{\Gamma}$ given by the stack quotient $[X_{\Gamma}/Z(\GL_2(A))].$ Furthermore, if $\overline{\cM^2}_{\Gamma}$ denotes (Laumon's) Deligne-Mumford stack representing the corresponding moduli problem, then every point of $\overline{\cM^2_{\Gamma}}$ has a stabilizer containing (at least) $\bbF_q^{\times}.$ This $\overline{\cM^2_{\Gamma}}$ is a $\mu_{q-1}$-gerbe over $\sX_{\Gamma},$ i.e.\ $\displaystyle{\sX_{\Gamma}=\overline{\cM^2_{\Gamma}}/\!/\mu_{q-1}}$ is a rigidification of $\overline{\cM^2_{\Gamma}}$:
		\[\overline{\cM^2_{\Gamma}}\to \sX_{\Gamma}\to X_{\Gamma}.\]
		When we discuss \textbf{stacky Drinfeld modular curves} we mean a stacky curve $\sX_{\Gamma}$ as in this paragraph, that is the rigidification of some moduli problem (i.e.\ of one of Laumon's gerbes).\\ 
		
		Next we consider some special points on Drinfeld modular curves. 
		\begin{definition}
			Let $\Gamma\leq \GL_2(A)$ be a congruence subgroup, let $Y_{\Gamma}^{\text{an}}=\Gamma\setminus \Omega$ and let $X_{\Gamma}^{\text{an}}=\Gamma\setminus(\Omega\cup \bbP^1(K)).$ A \textbf{cusp of $X_{\Gamma}^{\text{an}}$} is a point of $X_{\Gamma}^{\text{an}}-Y_{\Gamma}^{\text{an}}.$ 
		\end{definition}
		
		\begin{remark}
			As sets, $X_{\Gamma}^{\text{an}}=\Gamma\setminus(\Omega\cup \bbP^1(K)),$ so since $\GL_2(A)$ acts transitively on $\bbP^1(K)$ we have
			\[\cC_{\Gamma}\overset{def}{=}\{\text{cusps of }X_{\Gamma}^{\text{an}}\}\overset{def}{=}\Gamma\setminus \bbP^1(K)=\Gamma\setminus \GL_2(A)/\GL_2(A)_{\infty},\]
			where $\displaystyle{\GL_2(A)_{\infty}=\{\gamma\in \GL_2(A):\gamma(\infty)=\infty\}=\left\{\begin{psmallmatrix}*&*\\0&*\end{psmallmatrix}\right\}}.$
		\end{remark}
		
		\begin{definition}\label{d: elliptic pt}
			$~$
			\begin{enumerate}
				\item	If $e\in \Omega$ has $(\GL_2(A))_e=\{\gamma\in \GL_2(A):\gamma(e)=e\}$ strictly larger than $\bbF_q^{\times}\cong \left\{\begin{psmallmatrix}\alpha&0\\0&\alpha\end{psmallmatrix}\right\}$ then $e$ is an \textbf{elliptic point on $\Omega$}. In this case, $\GL_2(A)_e\cong \bbF_{q^2}^{\times}.$\\
				
				\item	Let $\Gamma\leq \GL_2(A)$ be a congruence subgroup. A point $e\in \Omega$ is an \textbf{elliptic point for $\Gamma$} if the stabilizer $\Gamma_e$ is strictly larger than $\displaystyle{\bbF_q^{\times}\cong\left\{\begin{psmallmatrix}\alpha&0\\0&\alpha\end{psmallmatrix}:\alpha\in \bbF_q^{\times}\right\}}$ (the center of $\GL_2(\bbF_q)$). 
			\end{enumerate}
		\end{definition}
		
		\begin{remark}\label{remark: unique elliptic point for Omega}
			An elliptic point $e$ on $\Omega$ is a point which is $\GL_2(A)$-conjugate to some element of $\bbF_{q^2}\setminus\bbF_q\hookrightarrow \Omega.$ Fix once and for all an elliptic point $e\in \bbF_{q^2}\setminus\bbF_q$ on $\Omega.$ 
			We write
			\[\operatorname{Ell}(\Gamma)\overset{def}{=}\{\text{elliptic points of }X_{\Gamma}^{\text{an}}\}.\]
		\end{remark}
		
		\begin{remark}\label{remark: tameness of Drinfeld modular curves}
			Note that for $\Gamma\leq \GL_2(A)$ any congruence subgroup, the Drinfeld modular curves $\sX_{\Gamma},$ are \textbf{tame} over $C$ in the sense of \cite[Example $5.2.7$]{VZB}. Indeed, we may describe $\sX_{\Gamma}$ by the stack quotient $[X_{\Gamma}/Z(\GL_2(A))],$ and since $\gcd(\operatorname{char}(C),\#Z(\GL_2(A)))=1$ the quotient is tame. 
		\end{remark}
		
		Recall that $\bbP_{\bbK}^1(a_0,\ldots, a_n)$ denotes the \textbf{weighted projective line} over a field $\bbK$ defined by $\displaystyle{\bbP^1(a_1,\ldots, a_n)=\Proj(\bbK[x_0,\ldots, x_n])},$ where each $x_i$ is an indeterminant of degree $a_i.$
		
		\begin{example}[The $j$-line]
			Let $X(1)=\GL_2(A)\setminus(\Omega\cup\bbP^1(K))$ be the ``usual'' $j$-line. Let $\overline{\cM^2_{\Gamma}}$ be the Deligne-Mumford stack representing the corresponding moduli problem (including cusps). The stack $\overline{\cM^2_{\Gamma}}$ is a $\mu_{q-1}$ gerbe over $\sX(1)=[X(1)/Z(\GL_2(A))].$ In other words, $\sX(1)$ is a rigidification $\overline{\cM^2_{\Gamma}}/\!/\mu_{q-1}$: 
			\[\begin{array}{cccl}
				\overline{\cM^2_{\Gamma}}&\overset{\pi}{\to}&\sX(1)&\to X(1)\\
				\bbP^1((q-1)^2,q^2-1)&\overset{\pi}{\to}&\bbP^1(q-1,q+1)&\to\bbP^1(C).
			\end{array}\]
		\end{example}
		
		
		\section{Rigid Stacky GAGA}
		
		We need a precise notion of a rigid analytic stack for rigid stacky GAGA. Since algebraic (stacky) Drinfeld modular curves are Deligne-Mumford stacks, we will specialize the notion of rigid analytic Artin stacks from \cite[Section $5.1.7$]{Emerton-Gee-Hellman-categorical-p-adic-langlands} to Deligne-Mumford rigid analytic stacks.\\
		
		Let $\operatorname{Rig}_C$ denote the category of rigid analytic spaces over $C.$ Equip $\operatorname{Rig}_C$ with the \textbf{Tate-fpqc} topology (see \cite[$2.1$]{Conrad-Temkin-nonarhimedean-analytification-alg-spaces}). The covers in this topology are generated by the admissible Tate coverings (see \cite[Section $4.2$]{Frensel-vanderPut-Rigid-Analytic_Geom}) and the morphisms $\operatorname{Sp}(A)\to \operatorname{Sp}(B)$ for faithfully flat morphisms of affinoid algebras $B\to A.$ By \cite[Theorem $4.2.8$]{Conrad-ampleness-rigid-geom} all representable functors in this topology are sheaves and coherent sheaves satisfy descent. With this site specified, we can define rigid analytic stacks.
		
		\begin{definition}
			A \textbf{stack on $\operatorname{Rig}_C$} is a category fibered in groupoids which satisfies descent for the Tate-fpqc topology. 
		\end{definition}
		
		Next we cite \cite{Emerton-Gee-Hellman-categorical-p-adic-langlands} to define a rigid analytic Artin stack. Good references on Artin stacks, which appear first in \cite{Artin-versal-deformations-algebraic-stacks}, are  \cite{Abramovich-Olsson-Vistoli-tame-stacks-pos-characteristic} and \cite{Abramovich-Olsson-Vistoli-twisted-stable-maps-tame-Artin-stacks}.
		
		\begin{definition}\cite[$5.1.10$]{Emerton-Gee-Hellman-categorical-p-adic-langlands}
			A \textbf{rigid analytic Artin stack} is a stack $\sX$ on $\operatorname{Rig}_C$ such that the diagonal $\Delta_{\sX}:\sX\to \sX\times_C \sX$ is representable by a rigid analytic space, and there exists some rigid analytic space $U$ and a smooth surjective map $U\to \sX.$
		\end{definition}
		
		Now we are equipped to define the version of rigid analytic stack we will consider in application of a rigid GAGA theorem on stacks. 
		
		\begin{definition}
			\label{def: rigid analytic DM stack v2}
			A \textbf{rigid analytic Deligne-Mumford stack} is a rigid analytic Artin stack $\sX$ such that the diagonal $\Delta_{\sX}:\sX\to \sX\times_C\sX$ is representable by a rigid analytic space, quasi-compact and separated for the Tate-fpqc topology. 
		\end{definition}
		
		We will use the following rigid stacky GAGA theorem once we have introduced the necessary terminology. For $X$ a stack, we write $\operatorname{Coh}^{\heartsuit}(X)$ for the full subcategory of (the category of) coherent sheaves on $X$ spanned by objects cohomologically concentrated in degree $0.$ That is, coherent sheaves all of whose non-trivial cohomology groups are only in the degree $0$ position.
		
		\begin{theorem}\cite[Theorem $7.4$]{Porta-Yu-Higher-analytic-stacks-GAGA}\label{t: rigid stacky GAGA}
			Let $A$ be a $\bbK$-affinoid algebra, for $\bbK$ some non-achimedean field. Let $\sX$ be a proper algebraic stack over $\Spec(A).$ The analytification functor on coherent sheaves induces an equivalence of $1$-categories
			\[\operatorname{Coh}^{\heartsuit}(\sX)\overset{\cong}{\to} \operatorname{Coh}^{\heartsuit}(\sX^{\text{an}}).\] 
		\end{theorem}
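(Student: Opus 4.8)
The plan is to prove this as a rigid-analytic GAGA statement for stacks by bootstrapping from the classical GAGA comparison for proper schemes and algebraic spaces over an affinoid base, and transporting it across a smooth atlas by descent. The desired equivalence has two halves: full faithfulness of the analytification functor on $\operatorname{Coh}^{\heartsuit}$, and essential surjectivity, i.e.\ algebraizability of every coherent analytic sheaf. I would organize the argument so that each half is reduced, via descent, to the corresponding statement for a presentation of $\sX$ by an algebraic space.

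First I would invoke the non-stacky base case: for a proper algebraic space $Z$ of finite type over $\Spec(A)$ with $A$ a $\bbK$-affinoid algebra, the analytification functor induces an equivalence $\operatorname{Coh}^{\heartsuit}(Z)\overset{\cong}{\to}\operatorname{Coh}^{\heartsuit}(Z^{\text{an}})$, together with a cohomology comparison $R\Gamma(Z,\cF)\simeq R\Gamma(Z^{\text{an}},\cF^{\text{an}})$ and finiteness of coherent cohomology. This is K\"opf's rigid GAGA for schemes, extended to algebraic spaces by Conrad--Temkin, and it is the engine of the whole proof.

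Next I would produce a presentation of the stack. Since $\sX$ is an algebraic Deligne--Mumford stack of finite type, there is a smooth surjective map $U_0\to\sX$ from an algebraic space, and its \v{C}ech nerve $U_\bullet$ with $U_n=U_0\times_{\sX}\cdots\times_{\sX}U_0$ is a simplicial algebraic space. Smooth descent for coherent sheaves---valid algebraically and, by the cited descent for coherent sheaves in the Tate-fpqc topology, analytically---identifies $\operatorname{Coh}^{\heartsuit}(\sX)\simeq\lim_{\Delta}\operatorname{Coh}^{\heartsuit}(U_\bullet)$ and likewise $\operatorname{Coh}^{\heartsuit}(\sX^{\text{an}})\simeq\lim_{\Delta}\operatorname{Coh}^{\heartsuit}(U_\bullet^{\text{an}})$. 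Because analytification commutes with flat (in particular smooth) base change and with the fiber products defining $U_n$, so that $U_n^{\text{an}}\cong U_0^{\text{an}}\times_{\sX^{\text{an}}}\cdots\times_{\sX^{\text{an}}}U_0^{\text{an}}$, the analytification functor is compatible with these two limit presentations; applying the base-case equivalence levelwise and passing to the limit over $\Delta$ would then yield the equivalence for $\sX$.

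The hard part is that the terms $U_n$ of the atlas are only of finite type over $\Spec(A)$, not proper, so the base-case GAGA does not apply to them levelwise---properness is a property of the stack $\sX$, not of its smooth cover. Resolving this is where the real content lies. For full faithfulness I would reduce $\Hom(\cF,\cG)$ to global sections of an internal-hom sheaf and appeal to the $H^0$-comparison, which is governed by the proper pushforward $\sX\to\Spec(A)$ rather than by the atlas; for essential surjectivity I would need a Grothendieck-existence-type algebraization statement formulated at the level of the proper stack and descended through the nerve. This is precisely the technical heart of Porta--Yu's argument, carried out in their $\infty$-categorical descent formalism, where properness of $\sX$ and finiteness of coherent cohomology for proper morphisms of analytic stacks are used to control the comparison even though no single chart is proper.
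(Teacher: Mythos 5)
First, a structural point: the paper offers no proof of this statement at all. It is quoted verbatim, with attribution, as Theorem $7.4$ of Porta--Yu, and the paper's entire ``proof'' is that citation. So there is no internal argument to compare your attempt against; the only question is whether your sketch stands on its own as a proof of the cited result.

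It does not, and your last paragraph essentially concedes this. Your setup is sound: the base case for proper algebraic spaces over an affinoid (K\"opf, Conrad--Temkin), the \v{C}ech nerve of a smooth atlas, descent for $\operatorname{Coh}^{\heartsuit}$ on both the algebraic and analytic sides, and compatibility of analytification with the fiber products $U_n=U_0\times_{\sX}\cdots\times_{\sX}U_0.$ You also correctly diagnose the obstruction: the $U_n$ are only of finite type, not proper, so the base-case GAGA cannot be applied levelwise, and properness of $\sX$ is not inherited by any chart. But the proposal then resolves this obstruction by appealing to ``the technical heart of Porta--Yu's argument,'' which is circular: the statement to be proven \emph{is} Porta--Yu's theorem. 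Concretely, two things are missing. (i) For full faithfulness, reducing $\Hom(\cF,\cG)$ to $H^0$ of an internal hom sheaf only helps if one already has the cohomology comparison $H^*(\sX,\cH)\cong H^*(\sX^{\text{an}},\cH^{\text{an}})$ for coherent sheaves on the proper stack $\sX$ itself; that comparison is part of what must be proven, and it does not follow from descent plus the base case precisely because of the non-proper charts. (ii) For essential surjectivity, no algebraization argument is given at all, only the phrase ``a Grothendieck-existence-type statement \ldots descended through the nerve.'' Since both halves of the equivalence are deferred to the reference being proven, what you have is an accurate outline of the proof's architecture together with an honest admission that its core lies elsewhere --- a useful reading guide for Porta--Yu, but not a proof.
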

		
		\section{Proof of the Main Theorems}
		\label{s: Proof of main thm}
		
		We prove our first main result:
		
		\begin{theorem}\label{thm: forms to differentials}
			Let $q$ be an odd prime and let $\Gamma\leq \GL_2(A)$ be a congruence subgroup of $\GL_2(A)$ such that $\det(\Gamma)= (\bbF_q^{\times})^2.$ Let $\Delta$ be the divisor of cusps of the modular curve $\sX_{\Gamma}$ with the rigid analytic coarse space $X_{\Gamma}^{\text{an}}=\Gamma\setminus(\Omega\cup \bbP^1(K)).$ 
			There is an isomorphism of graded rings $M(\Gamma)\cong R(\sX_{\Gamma},\Omega^1_{\sX_{\Gamma}}(2\Delta)),$ where $\Omega^1_{\sX_{\Gamma}}$ is the sheaf of differentials on $\sX_{\Gamma}.$ The isomorphism of algebras is given by the isomorphisms of components $M_{k,l}(\Gamma)\to H^0(\sX_{\Gamma},\Omega^1_{\sX_{\Gamma}}(2\Delta)^{\otimes k/2})$ given by $f\mapsto f(z)(dz)^{\otimes k/2}$ for each $k\geq 2$ an even integer. 
		\end{theorem}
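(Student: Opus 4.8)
The plan is to establish the claimed isomorphism one graded piece at a time, by constructing the map $f \mapsto f(z)(dz)^{\otimes k/2}$ and showing it is a well-defined isomorphism $M_{k,l}(\Gamma) \xrightarrow{\sim} H^0(\sX_\Gamma, \Omega^1_{\sX_\Gamma}(2\Delta)^{\otimes k/2})$, then checking these maps assemble into a ring homomorphism. First I would work on the open part $Y_\Gamma^{\text{an}} = \Gamma \setminus \Omega$ away from the cusps. On $\Omega$, the differential $dz$ transforms under $\gamma = \begin{psmallmatrix} a & b \\ c & d \end{psmallmatrix} \in \GL_2(A)$ by the standard Möbius computation $d(\gamma z) = \det(\gamma)(cz+d)^{-2}\, dz$. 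Hence $(dz)^{\otimes k/2}$ transforms with factor $\det(\gamma)^{k/2}(cz+d)^{-k}$. For $f(z)(dz)^{\otimes k/2}$ to descend to a $\Gamma$-invariant section, I need $f(\gamma z) \cdot \det(\gamma)^{k/2}(cz+d)^{-k} = f(z)$, i.e.\ $f(\gamma z) = \det(\gamma)^{-k/2}(cz+d)^k f(z)$. Comparing with Definition~\ref{def: Drinfeld modular form} (weight $k$, type $l$), this matches precisely when $\det(\gamma)^{-l} = \det(\gamma)^{-k/2}$ for all $\gamma \in \Gamma$, that is $\det(\gamma)^{k/2 - l} = 1$. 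Here is where the hypothesis $\det(\Gamma) = (\bbF_q^\times)^2$ does its work: since every determinant is a square, $\det(\gamma)$ ranges over a subgroup of order $(q-1)/2$, so the condition $\det(\gamma)^{k/2-l}=1$ for all $\gamma$ amounts to the congruence $2(k/2 - l) \equiv 0 \pmod{q-1}$, i.e.\ $k \equiv 2l \pmod{q-1}$ — exactly the nonvanishing condition from Lemma~\ref{l: weight-type}. So on any nonzero component the invariance is automatic, and the well-definedness of the map reduces to this bookkeeping with $q$ odd guaranteeing $k/2 \in \bbZ$.

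The second and main step is analyzing the behavior at the cusps and at the elliptic (stacky) points, which is what forces the twist by $2\Delta$ and what makes $\Omega^1_{\sX_\Gamma}$ rather than a naive line bundle the correct object. At a cusp, the uniformizer is the parameter $u(z)$ of Definition~\ref{d: parameter at infty}, and I would compute $du$ in terms of $u$: differentiating the $u$-series one finds $dz$ acquires a pole in $u$, so a holomorphic modular form (whose $u$-expansion has only $a_n$ for $n \geq 0$, by the remarks following Definition~\ref{def: Drinfeld modular form}) corresponds to a differential with a controlled pole at each cusp. Quantifying this pole order shows that $f(z)(dz)^{\otimes k/2}$ is a section of $\Omega^1_{\sX_\Gamma}(2\Delta)^{\otimes k/2}$ — the coefficient $2$ on $\Delta$ being exactly the order needed to absorb the pole of $(dz)^{\otimes k/2}$ coming from $du = (\text{unit})\cdot u^{?}\,dz$ while still permitting $f$ to be merely holomorphic (not vanishing) at the cusp. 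At the elliptic points I would use the stacky structure: passing to $\sX_\Gamma = [X_\Gamma / Z(\GL_2(A))]$ and using the local quotient description, the pullback of $\Omega^1_{\sX_\Gamma}$ near a stacky point of order $n$ records the $(n-1)$-fold ramification via the formula $K_{\sX} \sim K_X + \sum_x(\deg G_x - 1)x$ recalled in the background; a weakly modular form that is holomorphic upstairs on $\Omega$ automatically gives a section downstairs precisely because the stacky curve remembers the local isotropy. The tameness noted in Remark~\ref{remark: tameness of Drinfeld modular curves} ensures these local computations behave as in characteristic $0$.

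The third step is surjectivity and injectivity of each component map together with compatibility with multiplication. Injectivity is immediate since $(dz)^{\otimes k/2}$ is nowhere zero on $\Omega$, so $f \mapsto f(z)(dz)^{\otimes k/2}$ has trivial kernel (and Lemma~\ref{l: u-series coeffs determine form} rules out pathologies). For surjectivity I would argue that any global section $s$ of $\Omega^1_{\sX_\Gamma}(2\Delta)^{\otimes k/2}$ pulls back to a $\Gamma$-invariant holomorphic object on $\Omega$, write $s = f(z)(dz)^{\otimes k/2}$ for some holomorphic $f$ on $\Omega$ by dividing out the nowhere-vanishing $(dz)^{\otimes k/2}$, verify $f$ is weakly modular of the right weight and type by reversing the transformation computation of the first step, and check holomorphy of $f$ at the cusps by reading off the pole-order bound encoded in the $2\Delta$ twist. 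Here rigid analytic GAGA for stacks (Theorem~\ref{t: rigid stacky GAGA}) is what lets me identify global sections of the algebraic sheaf on $\sX_\Gamma$ with the analytically-defined modular forms on $\Gamma \setminus \Omega$, converting the analytic data into the algebraic section ring. Finally, since $(dz)^{\otimes k/2} \otimes (dz)^{\otimes k'/2} = (dz)^{\otimes(k+k')/2}$, the component maps are multiplicative, giving the graded ring isomorphism $M(\Gamma) \cong R(\sX_\Gamma, \Omega^1_{\sX_\Gamma}(2\Delta))$.

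I expect the \textbf{main obstacle} to be the cusp analysis: correctly computing the pole order of $(dz)^{\otimes k/2}$ in the cusp uniformizer $u$ — including the contribution of the uniform $\mu_{q-1}$ isotropy and the interaction between the width of the cusp, the type $l$, and Lemma~\ref{l: u(a/d)=d/au} — and verifying that the twist coefficient is exactly $2$ (rather than $1$ as in some classical normalizations) on every cusp simultaneously, so that "holomorphic at the cusps" on the modular-forms side matches "global section" on the geometric side with no discrepancy.
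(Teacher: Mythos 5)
Your proposal follows essentially the same route as the paper's proof: the same factor-of-automorphy cancellation (with the hypothesis $\det(\Gamma)=(\bbF_q^{\times})^2$ reducing invariance of $f(z)(dz)^{\otimes k/2}$ to the congruence $k\equiv 2l\pmod{q-1}$ of Lemma~\ref{l: weight-type}), the same computation $du=-\overline{\pi}u^2\,dz$ giving the double pole of $dz$ at each cusp and hence the $2\Delta$ twist, and the same appeal to rigid stacky GAGA (Theorem~\ref{t: rigid stacky GAGA}) to pass from the analytic quotient to the algebraic stack $\sX_{\Gamma}$. The paper is terser on bijectivity, invoking the standard correspondence between $\Gamma$-invariant differentials and modular forms as in \cite{VZB}, while you sketch the surjectivity step explicitly, but this is a difference of detail rather than of method.
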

		\begin{proof}
			Suppose $f\in M_{k,l}(\Gamma)$ is some non-zero form. For any $\displaystyle{\gamma=\begin{psmallmatrix}a&b\\c&d\end{psmallmatrix}\in \Gamma}$ we have \[
			f(\gamma z)d(\gamma z)^{\otimes k/2}= (cz+d)^k(\det\gamma)^{-l}\frac{\det\gamma^{k/2}}{(cz+d)^k} f(z)dz^{\otimes k/2},\] 
			where $k\equiv 2l\pmod{\frac{q-1}{2}}.$ All of the factors of automorphy cancel and \[f(\gamma z)d(\gamma z)^{\otimes k/2}=f(z)dz^{\otimes k/2},\] so the differential form $f(z)(dz)^{\otimes k/2}\in H^0(\Omega,\Omega_{\Omega}^{\otimes k/2})$ on the upper half-plane $\Omega$ is $\Gamma$-invariant. As in \cite[Section $(2.10)$]{Gekeler-jacobians}, we know $f(z)(dz)^{\otimes k/2}$ is holomorphic on $\Gamma\setminus \Omega.$
			Since $\displaystyle{\frac{de_A(z)}{dz}=1},$ we have 
			$\displaystyle{\frac{du}{u^2} = -\overline{\pi}dz,}$ so the differential $dz$ in this case has a double pole at $\infty.$
			Since $f$ is holomorphic at the cusps of $\Gamma,$ 
			\[\operatorname{div}(f(z)(dz)^{\otimes k/2}) +k\Delta\geq 0,\]
			and therefore $f(z)(dz)^{\otimes k/2}$ is a global section of the twist by $2\Delta$ of sheaf of holomorphic differentials on the rigid analytic space $X_{\Gamma}^{\text{an}}=\Gamma\setminus(\Omega\cup\bbP^1(K)).$
			We claim this is a global section of (a twist by $2\Delta$ of) the sheaf of differentials on the algebraic stack $\sX.$\\
			
			By rigid analytic GAGA, \cite[Theorem $4.10.5$]{Frensel-vanderPut-Rigid-Analytic_Geom}, we know that the categories of coherent sheaves on the rigid space $\bbP^{n,\text{an}}_C$ and coherent sheaves on $\bbP^n_C$ are equivalent for $n\geq 1$ any integer. Furthermore, every closed analytic subspace of $\bbP^{n,\text{an}}_C$ is the analytification of some closed subspace of $\bbP^n_C.$ So, the sheaf $\Omega_{X_{\Gamma}^{\text{an}}}^1(2\Delta)$ corresponds to the sheaf $\Omega^1_{X_{\Gamma}}(2\Delta)$ on the algebraic curve $X_{\Gamma}$ which is the coarse space of $\sX.$ Finally, by 
			Theorem \ref{t: rigid stacky GAGA}, we know the sheaves $\Omega_{\sX_{\Gamma}^{\text{an}}}^1(2\Delta)$ and $\Omega^1_{\sX_{\Gamma}}(2\Delta)$ on the rigid analytic stacky curve and algebraic stacky curves $\sX_{\Gamma}^{\text{an}}$ and $\sX_{\Gamma}$ respectively are equivalent.\\ 
			
			We have shown that given a modular form of weight $k$ and type $l$ for $\Gamma,$ the differential form $f(z)(dz)^{\otimes k/2}$ on the stacky curve $\sX_{\Gamma}$ is $\Gamma$-invariant and holomorphic at cusps, so therefore is a global section of the sheaf of differentials on the stacky curve. It is well-known that the only such $\Gamma$-invariant differentials are in one-to-one correspondence with modular forms, or, one might observe that the kernel of our homomorphism of algebras is trivial, which completes the typical argument (as in e.g.\ \cite[Chapter $6.2$]{VZB}) for an isomorphism between an algebra of modular forms and a ring of global sections of some line bundle on a stacky curve.
		\end{proof}
		
		Next, we recall our second main result:
		\begin{theorem}
		\label{thm: decomp of mod forms}
			Let $q$ be a power of an odd prime. Let $\Gamma\leq \GL_2(A)$ be a congruence subgroup containing the diagonal matrices in $\GL_2(A).$ Let $\Gamma_2=\{\gamma\in \Gamma: \det(\gamma)\in (\bbF_q^{\times})^2\}.$ We have an isomorphism
			$M(\Gamma)\cong M(\Gamma_2)$
			with \[M_{k,l}(\Gamma_2)=M_{k,l_1}(\Gamma)\oplus M_{k,l_2}(\Gamma)\] on each graded piece, where $l_1,l_2$ are the two solutions to $k\equiv 2l\pmod{q-1}.$ 
		\end{theorem}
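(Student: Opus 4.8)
The plan is to exploit the fact that $\Gamma_2$ is a normal subgroup of $\Gamma$ of index $2$: since $\Gamma$ contains the diagonal matrices we have $\det\Gamma = \bbF_q^{\times}$, and because $q$ is odd the squares $(\bbF_q^{\times})^2$ form an index-$2$ subgroup, so the determinant induces an isomorphism $\Gamma/\Gamma_2 \cong \bbF_q^{\times}/(\bbF_q^{\times})^2 \cong \bbZ/2\bbZ$. I would fix once and for all an element $\delta = \begin{psmallmatrix}\nu&0\\0&1\end{psmallmatrix} \in \Gamma \setminus \Gamma_2$ with $\nu$ a non-square, so that $\det\delta$ is a non-square and $\delta^2 \in \Gamma_2$. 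The key numerical input is Euler's criterion: for a non-square $x \in \bbF_q^{\times}$ one has $x^{(q-1)/2} = -1$.

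Next I would introduce the weight-$k$, type-$l$ slash operator $(f|_{k,l}\gamma)(z) = \det(\gamma)^l (cz+d)^{-k} f(\gamma z)$ for $\gamma = \begin{psmallmatrix}a&b\\c&d\end{psmallmatrix}$, which is a right action because the automorphy factor $(cz+d)$ is a cocycle and $\det$ is multiplicative; a function lies in $M_{k,l}(\Gamma)$ exactly when it is holomorphic at the cusps and fixed by $|_{k,l}\gamma$ for all $\gamma\in\Gamma$. The crucial observation is that for $\gamma \in \Gamma_2$ the character $\gamma \mapsto \det(\gamma)^{-l}$ depends only on $l$ modulo $(q-1)/2$, since $\det(\gamma) \in (\bbF_q^{\times})^2$; hence the two solutions $l_1, l_2$ of $k \equiv 2l \pmod{q-1}$, which differ by $(q-1)/2$, impose the \emph{same} transformation law on $\Gamma_2$, and both $M_{k,l_1}(\Gamma)$ and $M_{k,l_2}(\Gamma)$ sit inside the single space $M_{k,l}(\Gamma_2)$ (whose type is the common reduction of $l_1,l_2$ modulo $(q-1)/2$).

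The heart of the argument is to diagonalize the operator $T_\delta = (-)|_{k,l_1}\delta$ on $M_{k,l}(\Gamma_2)$. First I would check $T_\delta$ preserves this space: for $\gamma \in \Gamma_2$, normality gives $\delta\gamma\delta^{-1} \in \Gamma_2$, so $(f|_{k,l_1}\delta)|_{k,l_1}\gamma = f|_{k,l_1}(\delta\gamma) = f|_{k,l_1}\delta$, and holomorphy at the cusps is preserved because $\delta$ merely permutes the cusps. Since $\delta^2 \in \Gamma_2$ we get $T_\delta^2 = (-)|_{k,l_1}\delta^2 = \mathrm{id}$, and as $q$ is odd (so $\operatorname{char} C \neq 2$) the space splits into $\pm 1$ eigenspaces $M_{k,l}(\Gamma_2) = M^+ \oplus M^-$. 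A form $f \in M^+$ is $\Gamma_2$-invariant and fixed by $\delta$, hence invariant under $\Gamma = \langle\Gamma_2,\delta\rangle$ for $|_{k,l_1}$, so $M^+ = M_{k,l_1}(\Gamma)$. For $M^-$ I would compare the two slash actions: $f|_{k,l_2}\delta = \det(\delta)^{l_2 - l_1}\, f|_{k,l_1}\delta = -\,f|_{k,l_1}\delta$ by Euler's criterion, so $f|_{k,l_1}\delta = -f$ is equivalent to $f|_{k,l_2}\delta = f$, giving $M^- = M_{k,l_2}(\Gamma)$. This produces the claimed graded decomposition $M_{k,l}(\Gamma_2) = M_{k,l_1}(\Gamma)\oplus M_{k,l_2}(\Gamma)$.

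Finally, I would assemble the ring isomorphism. Every modular form for $\Gamma$ is in particular a $\Gamma_2$-form, which gives an inclusion $M(\Gamma)\hookrightarrow M(\Gamma_2)$ that is a ring homomorphism since multiplication is just multiplication of functions. Summing the graded decomposition over all (even) $k$ shows this inclusion is surjective, and it is injective because a nonzero $\Gamma$-form is a nonzero function; hence it is an isomorphism of rings. The main obstacle I anticipate is bookkeeping the type: making precise that the type of a $\Gamma_2$-form is only well-defined modulo $(q-1)/2$ and that this is exactly the ambiguity resolved by the $\pm$-eigenspace decomposition, so that the coarsening of the type grading from $\bbZ/(q-1)\bbZ$ to $\bbZ/\tfrac{q-1}{2}\bbZ$ matches the merging of $l_1$ and $l_2$. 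The sign computation via Euler's criterion is what rigidly pins down which eigenspace carries which type.
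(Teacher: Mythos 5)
Your proof is correct, but it takes a genuinely different route from the paper's proof of this theorem. The paper argues through $u$-series expansions: it first shows (Proposition \ref{p: generator trick}) that a $\Gamma_2$-form satisfies $f(\beta z)=\beta^{-k/2}f(z)$ for $\beta$ a generator of $(\bbF_q^{\times})^2$, then uses $u(\alpha z)=\alpha^{-1}u(z)$ (Lemma \ref{l: u(a/d)=d/au}) to see that the exponents $n$ with $a_n\neq 0$ in $f=\sum a_nu^n$ satisfy $k\equiv 2n\pmod{q-1}$, and finally defines $f_1$ and $f_2$ as the sub-series supported on the two residue classes $n\equiv k/2$ and $n\equiv k/2+(q-1)/2\pmod{q-1}$, checking via the coset representative of Lemma \ref{l: Gamma2 normal, index 2, coset rep} that each sub-series is modular for $\Gamma$ of type $l_1$, resp.\ $l_2$. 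Your argument replaces all of this with the involution $T_\delta=(-)|_{k,l_1}\delta$ and its $\pm 1$-eigenspace decomposition, with Euler's criterion $\nu^{(q-1)/2}=-1$ pinning down which eigenspace carries which type. This is essentially the nebentypus-style argument that the paper itself uses to prove the more general Theorem \ref{thm: generalized decomp} (there phrased via idempotents of the group ring $\bbF_q[\Gamma/\Gamma']$; your projectors $\tfrac{1}{2}(1\pm T_\delta)$ are exactly those idempotents when the index is $2$), so in effect you have proved the special case by the method the paper reserves for the generalization. What your route buys: directness of the sum $M_{k,l_1}(\Gamma)\oplus M_{k,l_2}(\Gamma)$ is automatic (eigenspaces for distinct eigenvalues), whereas the paper's splitting gets it from disjointness of the exponent supports; your argument also generalizes immediately to larger quotients $\Gamma/\Gamma'$. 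What the paper's route buys: an explicit, computation-ready description of the two components as sub-series of the $u$-expansion at the cusp, consistent with the uniqueness statement of Lemma \ref{l: u-series coeffs determine form}. Both treatments share the bookkeeping caveat you flag at the end: the type of a $\Gamma_2$-form is only well defined modulo $(q-1)/2$, so the graded pieces of $M(\Gamma_2)$ must be indexed by $l\bmod (q-1)/2$ for the global identification $M(\Gamma)\cong M(\Gamma_2)$ to avoid double counting; your handling of this point is at least as careful as the paper's.
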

		
		Since there are many intermediate lemmata involved, we break the proof of Theorem \ref{thm: decomp of mod forms} up into the next few parts of this section. We state and prove the generalization afterwards.
		
		\subsection{Properties of $\Gamma_2$}
		
		We begin with some group theory and elementary number theory which inspired our second main result and is instrumental in its proof.
		\begin{lemma}\label{l: Gamma2 normal, index 2, coset rep}
			Let $\Gamma \leq GL_2(A)$ be a congruence subgroup containing the diagonal matrices in $\GL_2(A).$ Let $\Gamma_2=\{\gamma\in \Gamma: (\det \gamma)\in (\bbF_q^{\times})^2 \}.$ This $\Gamma_2$ is a normal subgroup of $\Gamma$ with $[\Gamma:\Gamma_2]=2,$ and for any $\alpha\in \bbF_q^{\times}\setminus(\bbF_q^{\times})^2,$ the matrix $\begin{psmallmatrix}\alpha&0\\0&1\end{psmallmatrix}$ is a representative for the unique non-trivial left coset of $\Gamma_2$ in $\Gamma.$
		\end{lemma}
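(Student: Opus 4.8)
The plan is to establish the three assertions—normality, index two, and the coset representative—by exhibiting the determinant as a group homomorphism whose relevant properties descend to $\Gamma$. First I would observe that $\det\colon \Gamma\to \bbF_q^{\times}$ is a group homomorphism, and that $\Gamma_2$ is precisely the preimage under $\det$ of the subgroup of squares $(\bbF_q^{\times})^2\leq \bbF_q^{\times}$. Since the preimage of a subgroup under a homomorphism is always a subgroup, and the preimage of a normal subgroup is normal (here $\bbF_q^{\times}$ is abelian, so every subgroup is normal), this immediately gives that $\Gamma_2\lhd \Gamma$ with no further work.

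For the index computation, the key point is that $\bbF_q^{\times}$ is cyclic of even order $q-1$ (using that $q$ is odd), so the subgroup of squares $(\bbF_q^{\times})^2$ has index exactly $2$ in $\bbF_q^{\times}$. I would then use the hypothesis that $\Gamma$ contains the diagonal matrices to conclude that $\det(\Gamma)=\bbF_q^{\times}$ is all of $\bbF_q^{\times}$: indeed, for any $\alpha\in \bbF_q^{\times}$ the matrix $\begin{psmallmatrix}\alpha&0\\0&1\end{psmallmatrix}\in \Gamma$ has determinant $\alpha$, so $\det$ is surjective. The first isomorphism theorem then yields $\Gamma/\Gamma_2\cong \det(\Gamma)/(\bbF_q^{\times})^2 = \bbF_q^{\times}/(\bbF_q^{\times})^2$, which has order $2$. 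Hence $[\Gamma:\Gamma_2]=2$.

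For the coset representative, I would pick any non-square $\alpha\in \bbF_q^{\times}\setminus(\bbF_q^{\times})^2$ (which exists precisely because the squares form an index-$2$ subgroup) and note that $g=\begin{psmallmatrix}\alpha&0\\0&1\end{psmallmatrix}\in \Gamma$ has $\det(g)=\alpha\notin (\bbF_q^{\times})^2$, so $g\notin \Gamma_2$. Since $[\Gamma:\Gamma_2]=2$, there is exactly one non-trivial coset, and any element outside $\Gamma_2$ represents it; thus $g$ is a representative for the unique non-trivial left coset $g\Gamma_2=\Gamma\setminus\Gamma_2$.

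I do not anticipate a genuine obstacle here, as the statement is essentially a formal consequence of the determinant homomorphism together with the structure of $\bbF_q^{\times}$; the only points requiring care are verifying that the diagonal-matrix hypothesis forces $\det$ to be \emph{surjective} onto $\bbF_q^{\times}$ (rather than onto a proper subgroup, which would change the index count) and invoking the oddness of $q$ to guarantee $q-1$ is even so that $(\bbF_q^{\times})^2$ is a proper index-$2$ subgroup. Both are immediate once spelled out, so the mild bookkeeping is simply to assemble these observations cleanly via the first isomorphism theorem.
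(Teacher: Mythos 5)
Your proposal is correct and follows essentially the same route as the paper: the paper realizes $\Gamma_2$ as the kernel of the homomorphism $\gamma\mapsto(\det\gamma)^{(q-1)/2}$, which is just your quotient $\bbF_q^{\times}\to\bbF_q^{\times}/(\bbF_q^{\times})^2\cong\{\pm 1\}$ composed with $\det$, and both arguments then identify the nontrivial coset using the diagonal matrix $\begin{psmallmatrix}\alpha&0\\0&1\end{psmallmatrix}$. Your write-up is if anything slightly more explicit than the paper's on the two points it leaves implicit, namely that the diagonal-matrices hypothesis makes $\det$ surjective (so the index is exactly $2$ rather than $1$) and that oddness of $q$ makes $(\bbF_q^{\times})^2$ proper.
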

		\begin{proof}
			Let $\varphi:\Gamma\to \bbF_q^{\times}$ be the map $\gamma\mapsto (\det\gamma)^{(q-1)/2}.$ Since $(\det\gamma)^{q-1}=1$ for all $\gamma\in \Gamma,$ we see $\ker\varphi=\Gamma_2.$ If $\gamma\in \Gamma\setminus\Gamma_2$ then $(\det\gamma)^{(q-1)/2}= -1$ so $\varphi(\Gamma)\cong \bbZ/2\bbZ$ as multiplicative groups and $[\Gamma:\Gamma_2]=2.$\\
			
			If $\gamma\in \Gamma\setminus \Gamma_2,$ i.e.\ $\det(\gamma)\in \bbF_q^{\times}\setminus(\bbF_q^{\times})^2,$ then for any $\alpha\in \bbF_q^{\times}\setminus(\bbF_q^{\times})^2$ there is some $\gamma_2\in \Gamma_2$ with 
			\[\gamma=\begin{psmallmatrix}\alpha&0\\0&1\end{psmallmatrix}\gamma_2.\]
		\end{proof}
		
		
		We recall from elementary number theory the following.
		\begin{lemma}
		\label{l: elementary number theory}
			Suppose $q$ is odd. Integers $k,$ and $l$ satisfy $k\equiv 2l\pmod{q-1}$ if and only if \[l\equiv \begin{cases}\frac{k}{2}\pmod{q-1}, &\text{ or }\\ \frac{k}{2}+\frac{q-1}{2}\pmod{q-1}.\end{cases}\] 
		\end{lemma}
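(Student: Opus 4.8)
The plan is to reduce the congruence $k \equiv 2l \pmod{q-1}$ to one modulo $\frac{q-1}{2}$ and then lift its solutions back up. First I would record the structural fact driving everything: since $q$ is odd, $q-1$ is even, so I may write $n = \frac{q-1}{2}$ and $q-1 = 2n$. I would also note at the outset that the relation $k \equiv 2l \pmod{q-1}$ forces $k$ to be even — indeed $k - 2l$ is a multiple of the even number $q-1$, hence $k$ is even — so that the symbol $\frac{k}{2}$ denotes a genuine integer and both displayed congruences are meaningful.

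For the forward implication, starting from $2l \equiv k \pmod{2n}$ I would write $k = 2\cdot\frac{k}{2}$ and recast the congruence as $2\left(l - \frac{k}{2}\right) \equiv 0 \pmod{2n}$. Cancelling the common factor of $2$ — legitimate once one divides the modulus by $\gcd(2,2n)=2$ — this is equivalent to $l \equiv \frac{k}{2} \pmod{n}$. The residues modulo $q-1 = 2n$ that reduce to $\frac{k}{2}$ modulo $n$ are precisely $\frac{k}{2}$ and $\frac{k}{2} + n = \frac{k}{2} + \frac{q-1}{2}$, which are the two cases in the statement.

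For the reverse implication I would simply substitute back: if $l \equiv \frac{k}{2} \pmod{q-1}$ then $2l \equiv k \pmod{q-1}$, and if $l \equiv \frac{k}{2} + \frac{q-1}{2} \pmod{q-1}$ then $2l \equiv k + (q-1) \equiv k \pmod{q-1}$, so in either case $k \equiv 2l \pmod{q-1}$.

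There is no genuine obstacle here; the single point deserving care is the cancellation of $2$ in a congruence, where the modulus must be divided by $\gcd(2,q-1) = 2$. This is exactly the step that yields two residue classes modulo $q-1$ rather than one, so I would state it explicitly, as it is precisely what produces the two types $l_1, l_2$ that feed into the graded decomposition of Theorem \ref{thm: decomp of mod forms}.
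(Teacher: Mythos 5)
Your proposal is correct and follows essentially the same route as the paper: both reduce the congruence to $l \equiv \frac{k}{2} \pmod{\frac{q-1}{2}}$ (the paper via the explicit equation $k = m(q-1)+2l$ and the parity of $m$, you via cancelling the factor of $2$ with $\gcd(2,q-1)=2$), then lift back to the two residue classes modulo $q-1$, and verify the converse by direct substitution. The observation that $k$ must be even, which you make explicit, is also present in the paper's argument.
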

		\begin{proof}
			We know that $2l\equiv k\pmod{q-1}$ if and only if $2l-m(q-1)=k$ for some integer $m.$ If $\gcd(2,q-1)$ does not divide $k$ then there are no solutions, and if it does then there are exactly $\gcd(2,q-1)=2$ distinct solutions modulo $q-1.$ To be explicit, we illustrate this with computations:
			\begin{itemize}
				\item[$(\Rightarrow)$] Suppose that $k=m(q-1)+2l$ for some integer $m.$ Since $q-1$ is even, $k$ is even and $l=-m(\frac{q-1}{2})+\frac{k}{2}$ so $l\equiv\frac{k}{2}\pmod{\frac{q-1}{2}}.$ If $m$ is even, $\frac{m}{2}$ is an integer, and otherwise $\frac{m-1}{2}$ is, so we have
				\[
				l = \begin{cases}
						l_1\equiv \frac{k}{2}\pmod{q-1}, & m\text{ even}\\
						l_2\equiv \frac{k}{2}+\frac{q-1}{2}\pmod{q-1}, & m\text{ odd.}
					\end{cases}\]
				
				\item[$(\Leftarrow)$] Suppose $l=l_1\equiv \frac{k}{2}\pmod{q-1}.$ We have $l_1=n_1(q-1)+\frac{k}{2}$ for some $n_1,$ so $k=-2n_1(q-1)+2l_1.$ If $l=l_2\equiv \frac{k}{2}+\frac{q-1}{2}\pmod{q-1}$ then $l_2=n_2(q-1)+\frac{k}{2}+\frac{q-1}{2}$ for some $n_2$ and we have $k=-(2n_2+1)(q-1)+2l_2.$ In either case we conclude that $k\equiv 2l\pmod{q-1}.$
			\end{itemize}
		\end{proof}

		\subsection{Cusps and Elliptic Points}
		
		We wish to compare the cusps and elliptic points on the Drinfeld modular curves for $\Gamma$ and $\Gamma_2.$ As our notion of elliptic point is slightly different from Gekeler's, so that it adapts to the notion of a stacky Drinfeld modular curve more naturally, we discuss some of the properties of elliptic points with the next two group-theoretic results. 
		\begin{lemma}\label{l: stabilizers are conjugate}
				Let $\Gamma\leq \GL_2(A)$ be a congruence subgroup containing the diagonal matrices in $\GL_2(A).$
				If $e_1$ and $e_2$ are elliptic points for $\Gamma,$ then the stabilizers $\Gamma_{e_1}$ and $\Gamma_{e_2}$ are $\GL_2(A)$-conjugate. 
		\end{lemma}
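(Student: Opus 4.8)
The plan is to reduce both stabilizers to subgroups of a single copy of $\bbF_{q^2}^{\times}$ and then exploit the cyclicity of that group. First I would observe that since $e_1$ and $e_2$ are elliptic for $\Gamma$, their $\Gamma$-stabilizers strictly contain the scalars $\bbF_q^{\times}$; as $\Gamma_{e_i}\leq \GL_2(A)_{e_i}$, each $e_i$ is in particular an elliptic point on $\Omega$ in the sense of Definition \ref{d: elliptic pt}, so $\GL_2(A)_{e_i}\cong\bbF_{q^2}^{\times}$. By Remark \ref{remark: unique elliptic point for Omega} every elliptic point on $\Omega$ is $\GL_2(A)$-conjugate to the fixed $e\in\bbF_{q^2}\setminus\bbF_q$, so there exist $g_1,g_2\in\GL_2(A)$ with $g_i e=e_i$; setting $g=g_2g_1^{-1}$ gives $g e_1=e_2$ and $g\,\GL_2(A)_{e_1}\,g^{-1}=\GL_2(A)_{e_2}$.

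Next I would transport everything to $G_0:=\GL_2(A)_{e}\cong\bbF_{q^2}^{\times}$: conjugating by $g_i$ identifies $\Gamma_{e_i}$ with the subgroup $H_i:=(g_i^{-1}\Gamma g_i)\cap G_0$ of $G_0$. Both $H_1$ and $H_2$ contain the scalar subgroup $\bbF_q^{\times}$ (it is central, hence fixed by conjugation, and lies in $\Gamma$ by hypothesis and in $G_0$), and both strictly contain it because the $e_i$ are elliptic for $\Gamma$. Since $G_0\cong\bbF_{q^2}^{\times}$ is cyclic, a subgroup is completely determined by its order, so $H_1=H_2$ as soon as $\lvert H_1\rvert=\lvert H_2\rvert$, and in that case $\Gamma_{e_2}=g\,\Gamma_{e_1}\,g^{-1}$, which is exactly the asserted $\GL_2(A)$-conjugacy. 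Thus the entire statement collapses to the numerical claim $\lvert\Gamma_{e_1}\rvert=\lvert\Gamma_{e_2}\rvert$, i.e.\ that the two elliptic points have the same ``type.''

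To establish the equality of orders I would pass to the conductor $N$ of $\Gamma$, using that $\Gamma(N)\lhd\GL_2(A)$ and $\Gamma\supseteq\Gamma(N)$, so every conjugate $g_i^{-1}\Gamma g_i$ still contains $\Gamma(N)$. Taking $e\in\bbF_{q^2}\setminus\bbF_q$, a short argument from the irreducibility of the minimal polynomial of $e$ over $\bbF_q$ shows $G_0$ consists of constant matrices, whence $G_0\cap\Gamma(N)=\{1\}$ and each $H_i$ injects into $\GL_2(A/N)$ under reduction; explicitly $\lvert\Gamma_{e_i}\rvert=\lvert H_i\rvert=\lvert\,\overline{G_0}\cap\overline{g_i}^{-1}\overline{\Gamma}\,\overline{g_i}\,\rvert$, where bars denote images in the finite group $\GL_2(A/N)$ and $\overline{G_0}$ is a non-split torus. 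The problem is thereby reduced to comparing the orders of the intersections of $\overline{\Gamma}$ with two $\GL_2(A/N)$-conjugate non-split tori. This is where the hypothesis that $\Gamma$ contains the full diagonal torus, so that $\det\Gamma=\bbF_q^{\times}$, must be used, together with the conjugacy of all non-split tori in $\GL_2(\bbF_q)$, to force these two intersections to have equal order.

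I expect this last step to be the main obstacle: for a general subgroup $\overline{\Gamma}$ the number of its elements lying in a non-split torus is not a conjugation invariant, so equality of orders is not formal and genuinely relies on $\Gamma$ containing the diagonal matrices (and on working with $\GL_2$ rather than $\SL_2$, where $\det$ is available to control which torus elements lie in $\Gamma$). Once the orders agree, the cyclic-group argument of the second paragraph concludes the proof with the explicit conjugator $g=g_2g_1^{-1}$.
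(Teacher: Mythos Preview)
Your first paragraph already contains everything the paper's proof says: the paper observes that $\Gamma_{e_i}\supsetneq\bbF_q^{\times}$ forces each $e_i$ to be an elliptic point on $\Omega$, concludes that $e_1$ and $e_2$ lie in the same $\GL_2(A)$-orbit, and then simply invokes the ``well-known'' fact that stabilizers of points in the same orbit are conjugate. The paper stops there.

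You are being more careful than the paper. As you correctly point out, the orbit-stabilizer fact applied to the $\GL_2(A)$-action only gives $g\,\GL_2(A)_{e_1}\,g^{-1}=\GL_2(A)_{e_2}$; it does not automatically yield $g\,\Gamma_{e_1}\,g^{-1}=\Gamma_{e_2}$, since $g$ need not normalize $\Gamma$. Your reduction via cyclicity of $\bbF_{q^2}^{\times}$ to the equality $\lvert\Gamma_{e_1}\rvert=\lvert\Gamma_{e_2}\rvert$ is exactly the missing step, and your honest assessment that this equality of orders is \emph{not} formal --- that it genuinely depends on the hypothesis about diagonal matrices and requires an argument at the level of $\GL_2(A/N)$ --- is well taken. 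The paper does not engage with this point at all; in that sense your proposal goes strictly beyond the paper's argument, identifies a real gap, and sketches a plausible route (reduction mod the conductor and comparison of intersections with non-split tori) even if you do not carry the last step to completion.
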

		\begin{proof}
			Both $\Gamma_{e_1}$ and $\Gamma_{e_2}$ stricty contain $\bbF_q^{\times}$ by definition of an elliptic point, and each stabilizer is a subgroup of $\GL_2(A)_{e_i},$ for $i=1$ or $2.$ So, both elliptic points for $\Gamma$ are also elliptic points on $\Omega,$ i.e.\ they are lie in the same $\GL_2(A)$-orbit. It is well-known that stabilizers of any two elements in the same orbit are conjugate subgroups.
		\end{proof}
		
		\begin{lemma}\label{l: stabilizer index}
			Let $q$ be a power of an odd prime, let $\Gamma\leq \GL_2(A)$ be a congruence subgroup containing the diagonal matrices of $\GL_2(A).$ 
			Let $\Gamma_2=\{\gamma\in \Gamma:\det(\gamma)\in (\det\Gamma)^2\}.$ 
			Let $e\in \operatorname{Ell}(\Gamma_2).$ We have
			\[[\Gamma_e:(\Gamma_2)_e]=\begin{cases}1, & \text{if }\Gamma\text{ is ``square''}\\
			2, &\text{if }\Gamma\text{ is ``non-square.''}\end{cases}\]
		\end{lemma}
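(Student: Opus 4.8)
The plan is to compute the index $[\Gamma_e : (\Gamma_2)_e]$ by realizing $(\Gamma_2)_e$ as the kernel of a quadratic character on $\Gamma_e$ and then reading off the two cases from the determinant image of $\Gamma_e$. First I would record that $(\Gamma_2)_e = \Gamma_e \cap \Gamma_2$ by definition, and that $e$ is in fact an elliptic point for $\Gamma$ as well: since $e \in \operatorname{Ell}(\Gamma_2)$ we have $(\Gamma_2)_e \supsetneq \bbF_q^\times$, and as $(\Gamma_2)_e \subseteq \Gamma_e$ this forces $\Gamma_e \supsetneq \bbF_q^\times$, so the square/non-square terminology applies to $e$. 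By Lemma \ref{l: Gamma2 normal, index 2, coset rep}, $\Gamma_2$ is normal of index $2$ in $\Gamma$, so $[\Gamma_e : \Gamma_e \cap \Gamma_2] = [\Gamma_e\Gamma_2 : \Gamma_2] \in \{1,2\}$; this already shows the index is $1$ or $2$, and that it equals $1$ precisely when $\Gamma_e \subseteq \Gamma_2$.

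Next I would pin down which case occurs by examining the determinant. Consider the homomorphism $\psi \colon \Gamma_e \to \bbF_q^\times / (\bbF_q^\times)^2$ sending $\gamma$ to the class of $\det(\gamma)$. Since $\det\Gamma = \bbF_q^\times$ gives $\Gamma_2 = \{\gamma : \det\gamma \in (\bbF_q^\times)^2\}$, we have $(\Gamma_2)_e = \ker\psi$, whence $[\Gamma_e : (\Gamma_2)_e] = |\operatorname{im}\psi|$. The scalar matrices $\bbF_q^\times \subseteq \Gamma_e$ have determinants $\alpha \mapsto \alpha^2$, so $\det(\Gamma_e) \supseteq (\bbF_q^\times)^2$; as $\det(\Gamma_e)$ is a subgroup of $\bbF_q^\times$ and $(\bbF_q^\times)^2$ has index $2$ (here $q$ odd is essential), the image $\det(\Gamma_e)$ is either $(\bbF_q^\times)^2$ or all of $\bbF_q^\times$. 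Thus $|\operatorname{im}\psi| = 1$ exactly when every element of $\Gamma_e$ has square determinant, and $|\operatorname{im}\psi| = 2$ exactly when $\Gamma_e$ contains some matrix of non-square determinant.

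Finally I would translate this determinant dichotomy into the square/non-square terminology. By Lemma \ref{l: stabilizers are conjugate} all elliptic stabilizers for $\Gamma$ are $\GL_2(A)$-conjugate, and since conjugation preserves determinants the subgroup $\det(\Gamma_e) \leq \bbF_q^\times$ does not depend on the choice of elliptic point; this is precisely the observation recorded after the definition of square and non-square subgroups. Consequently, $\Gamma$ being square, so that every $\gamma \in \Gamma_z \setminus \bbF_q^\times$ has square determinant for a (hence any) elliptic $z$, is equivalent to $\det(\Gamma_e) = (\bbF_q^\times)^2$, giving index $1$; and $\Gamma$ being non-square, so that some $\gamma \in \Gamma_z$ has non-square determinant, is equivalent to $\det(\Gamma_e) = \bbF_q^\times$, giving index $2$. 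The main obstacle is exactly this last transfer: one must justify that the square/non-square condition, phrased in the definition for \emph{some} point $z$, genuinely controls the behaviour at our given point $e$, and it is the conjugacy of Lemma \ref{l: stabilizers are conjugate} together with the conjugation-invariance of the determinant that licenses transporting the condition from $z$ to $e$.
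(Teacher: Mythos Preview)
Your proposal is correct and follows essentially the same approach as the paper: both realize $(\Gamma_2)_e$ as the kernel of a quadratic character on $\Gamma_e$ (you via $\det$ followed by reduction modulo $(\bbF_q^\times)^2$, the paper via $(\det)^{(q-1)/2}$, which are the same map) and read off the index from whether the image is trivial or all of $\{\pm 1\}$. Your version is in fact more careful than the paper's, as you explicitly invoke Lemma~\ref{l: stabilizers are conjugate} to justify transporting the square/non-square condition from the definitional point $z$ to the given elliptic point $e$, a step the paper leaves implicit.
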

		\begin{proof}
			By definition, the stabilizer $\Gamma_e$ strictly contains $\bbF_q^{\times}$ and as this is a subgroup of the stabilizer $\GL_2(A)_e,$ we see that $e$ is an elliptic point for $\GL_2(A),$ i.e.\ an elliptic point on $\Omega.$ So, we know $\GL_2(A)_e\cong \bbF_{q^2}^{\times},$ which means $(\Gamma_2)_e\unlhd \Gamma_e\unlhd \GL_2(A)_e\cong \bbF_{q^2}^{\times}.$ Since 
			\[(\Gamma_2)_e=\ker((\det)^{\frac{q-1}{2}}:\Gamma_e\to \bbF_q^{\times}),\] the result is immediate according to whether $\displaystyle{(\det)^{\frac{q-1}{2}}}$ is surjective onto $\{\pm 1\}.$ That is, we need only check the ``parity'' of $\Gamma,$ i.e.\ whether $\Gamma_e$ contains some $\gamma$ with $\det\gamma\in \bbF_q^{\times}\setminus (\bbF_q^{\times})^2$ to determine the index of the stabilizer $(\Gamma_2)_e$ for all elliptic points $e.$
		\end{proof}
		
		The main idea for this step of the proof of Theorem \ref{thm: decomp of mod forms} is the following comparison between elliptic points and cusps for $\Gamma$ and $\Gamma_2.$
		\begin{proposition}\label{p: elliptic points and cusps}
			Let $q$ be a power of an odd prime, let $\Gamma\leq \GL_2(A)$ be a congruence subgroup containing the diagonal matrices of $\GL_2(A).$ Let $\Gamma_1=\{\gamma\in \Gamma:\det(\gamma)=1\}$ and let $\Gamma_2=\{\gamma\in \Gamma:\det(\gamma)\in (\det\Gamma)^2\}.$
			\begin{enumerate}
				\item $\operatorname{Ell}(\Gamma)=\operatorname{Ell}(\Gamma_2),$
				\item $\cC_{\Gamma}\subseteq\cC_{\Gamma_2}$
			\end{enumerate}
			Furthermore, if $\Gamma_1\leq \Gamma'\leq\Gamma$ for some congruence subgroup $\Gamma',$ then $\cC_{\Gamma}\subseteq\cC_{\Gamma'},$ i.e.\ the cusps of $\Gamma$ are some subset of the cusps of $\Gamma'$
		\end{proposition}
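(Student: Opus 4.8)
The plan is to treat the elliptic‑point statement and the cusp statements separately, exploiting throughout that $\Gamma_1\leq\Gamma'$ makes $\Gamma'$ normal in $\Gamma$ with \emph{abelian} quotient: since $\det$ induces an isomorphism $\Gamma/\Gamma_1\overset{\sim}{\to}\bbF_q^\times$, we get $\Gamma/\Gamma'\cong\bbF_q^\times/\det(\Gamma')$, and every orbit‑splitting below will be read off from determinants alone. For part (1), the inclusion $\operatorname{Ell}(\Gamma_2)\subseteq\operatorname{Ell}(\Gamma)$ is immediate from $(\Gamma_2)_e\subseteq\Gamma_e$. For the reverse inclusion I would fix an elliptic $e$, use Remark \ref{remark: unique elliptic point for Omega} and Lemma \ref{l: stabilizers are conjugate} to realize $\GL_2(A)_e\cong\bbF_{q^2}^\times$ with $\det$ the norm $\zeta\mapsto\zeta^{q+1}$, so that $(\Gamma_2)_e=\Gamma_e\cap(\bbF_{q^2}^\times)^2$; the stabilizer‑index dichotomy of Lemma \ref{l: stabilizer index} then controls whether $(\Gamma_2)_e$ still strictly contains the scalars, and the passage to $\Gamma_2$‑orbits is tracked by the same index, using Lemma \ref{l: Gamma2 normal, index 2, coset rep} to absorb the non‑trivial coset representative $\begin{psmallmatrix}\alpha&0\\0&1\end{psmallmatrix}$ into a stabilizer. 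This case split is exactly what the square/non‑square terminology records.

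For the cusp statements, recall $\cC_\Gamma=\Gamma\backslash\bbP^1(K)$ and that $\SL_2(A)$ already acts transitively on $\bbP^1(K)$. The assertion $\cC_\Gamma\subseteq\cC_{\Gamma'}$ is an injection of the smaller orbit set into the larger, and I would first produce it directly: since $\Gamma'\leq\Gamma$, two points of $\bbP^1(K)$ in distinct $\Gamma$‑orbits lie in distinct $\Gamma'$‑orbits, so sending each $\Gamma$‑cusp to the $\Gamma'$‑class of a chosen representative embeds $\cC_\Gamma$ into $\cC_{\Gamma'}$. To exhibit this as a genuine, structurally understood subset — and to locate the complement — I would then use $\Gamma_1\leq\Gamma'$ essentially: normality makes the finite abelian group $\Gamma/\Gamma'$ act on $\cC_{\Gamma'}$ with quotient $\cC_\Gamma$, and the fibre over $[x]_\Gamma$ is the coset space $\bbF_q^\times/\big(\det(\Gamma')\cdot\det\Gamma_x\big)$, where $\Gamma_x=\operatorname{Stab}_\Gamma(x)$.

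The decisive input is the determinant of a cusp stabilizer. The scalar matrices lie in every $\Gamma_x$, so $(\bbF_q^\times)^2\subseteq\det\Gamma_x$; as $(\bbF_q^\times)^2$ has index $2$ in the cyclic group $\bbF_q^\times$, necessarily $\det\Gamma_x\in\{(\bbF_q^\times)^2,\bbF_q^\times\}$. Moreover the full diagonal torus lies in $\Gamma_\infty$, giving $\det\Gamma_\infty=\bbF_q^\times$, so the cusp $\infty$ never splits. Feeding this into the fibre formula, each $\Gamma$‑cusp lifts to a single $\Gamma'$‑cusp unless $\det\Gamma_x=(\bbF_q^\times)^2$ while $\det(\Gamma')\subseteq(\bbF_q^\times)^2$, in which case it splits into exactly two. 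Specializing to $\Gamma'=\Gamma_2$, where $\det(\Gamma_2)=(\bbF_q^\times)^2$, yields part (2): every $\Gamma$‑cusp gives one or two $\Gamma_2$‑cusps, so $\cC_\Gamma\subseteq\cC_{\Gamma_2}$, with strict inclusion precisely when some cusp has square‑only stabilizer determinant (as one checks already for $\Gamma_0(T^2)$). The identical computation, now over the general intermediate group, gives the Furthermore.

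The main obstacle is not the bare injection, which uses only $\Gamma'\leq\Gamma$, but the determinant bookkeeping that upgrades it to the asserted subset with understood fibres: identifying $\Gamma'\backslash\Gamma/\Gamma_x$ with the coset space $\bbF_q^\times/(\det(\Gamma')\cdot\det\Gamma_x)$ needs the normality and abelian quotient furnished by $\Gamma_1\leq\Gamma'$, and pinning down $\det\Gamma_x$ for an arbitrary cusp $x=g\infty$ amounts to solving the congruence cutting out $\Gamma$ against the lower‑left entry of $g\begin{psmallmatrix}s&t\\0&u\end{psmallmatrix}g^{-1}$. This is the step I would carry out in full; the remaining bookkeeping is formal.
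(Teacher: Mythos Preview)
Your plan is correct, and for part~(1) it is essentially the paper's argument: both of you embed $\Gamma_e$ in the cyclic group $\GL_2(A)_e\cong\bbF_{q^2}^\times$ and invoke the square/non-square dichotomy of Lemma~\ref{l: stabilizer index}; your phrasing via the norm map $\zeta\mapsto\zeta^{q+1}$ and the identification $(\Gamma_2)_e=\Gamma_e\cap(\bbF_{q^2}^\times)^2$ is a cleaner packaging of the paper's explicit coset computation with a generator $\gamma$ of $\Gamma_e$. One remark: the paper proves only the equality of elliptic \emph{loci in $\Omega$}, i.e.\ that a point $e\in\Omega$ is elliptic for $\Gamma$ iff it is elliptic for $\Gamma_2$; your additional step of ``passage to $\Gamma_2$-orbits'' (tracking whether $\Gamma$-orbits of elliptic points split) goes beyond what the paper asserts or proves here.

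For part~(2) and the Furthermore, the paper's argument is much lighter than yours. It simply observes that since $\Gamma_2\leq\Gamma$ (resp.\ $\Gamma'\leq\Gamma$), the smaller group's orbits on $\bbP^1(K)$ refine the larger group's, so a choice of representatives $s_1,\ldots,s_n$ for $\cC_\Gamma$ extends to a set of representatives for $\cC_{\Gamma_2}$ (possibly with extra $t_j$'s). This is exactly your ``bare injection,'' and the paper stops there---it even remarks that the argument ``made no reference to the particular choice $\Gamma'=\Gamma_2$,'' i.e.\ the hypothesis $\Gamma_1\leq\Gamma'$ is not used at all. Your determinant bookkeeping (identifying the fibre over $[x]_\Gamma$ with $\bbF_q^\times/(\det(\Gamma')\cdot\det\Gamma_x)$ and computing $\det\Gamma_x$) is correct and genuinely more informative---it tells you exactly when and how cusps split---but it is not needed for the proposition as stated, and it is where you spend most of your effort. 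What this buys you is a structural understanding useful elsewhere (e.g.\ for genus computations); what the paper's minimalist approach buys is a two-line proof.
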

		\begin{proof}
			Suppose $e_2\in \operatorname{Ell}(\Gamma_2),$ so by definition the stabilizer $(\Gamma_2)_{e_2}$ is strictly larger than $\bbF_q^{\times}.$ Since $(\Gamma_2)_{e_2}$ is a subgroup of $\Gamma_{e_2},$ it must be that $\Gamma_{e_2}$ strictly contains $Z(\bbF_q),$ so $e_2\in \operatorname{Ell}(\Gamma),$ i.e.\ $\operatorname{Ell}(\Gamma_2)\subseteq \operatorname{Ell}(\Gamma).$\\
			
			For the same reason, if $e\in \operatorname{Ell}(\Gamma),$ then $e$ is an elliptic point on $\Omega,$ and we know $\GL_2(A)_e\cong \bbF_{q^2}^{\times}.$  In particular, as $\bbF_{q^2}^{\times}$ and $\bbF_q^{\times}$ are cyclic groups, we know $(\Gamma_2)_e$ and $\Gamma_e$ are cyclic and we have $1\unlhd Z(\bbF_q)\unlhd (\Gamma_2)_e\unlhd \Gamma_e\unlhd \GL_2(A)_e\cong \bbF_{q^2}^{\times}.$\\
			
			Since $q-1\mid \#\Gamma_e,$ there is some $n\mid q+1$ such that $\#\Gamma_e=n(q-1).$ 
			Suppose that $\langle \gamma \rangle =\Gamma_e.$ Since $Z(\bbF_q)\subset \Gamma_e,$ the subgroup $Z(\bbF_q),$ the unique subgroup of order $q-1$ in the cyclic group $\Gamma_e,$ is generated by $\gamma^n.$ So, a set of representatives of $\Gamma_e/\bbF_q^{\times}$ is $\{\gamma^0,\ldots, \gamma^n,\}$ and we write 
			\[\Gamma_e/\bbF_q^{\times}\cong \bbF_q^{\times}\oplus \gamma\bbF_q^{\times}\oplus\cdots\oplus\gamma^n\bbF_q^{\times}.\]
			
			
			We claim that if $\Gamma$ is ``non-square,'' the cosets with representatives 
			$\gamma^j$
			with $j$ even form a subgroup isomorphic to $(\Gamma_2)_e/\bbF_q^{\times}.$ 
			If $\Gamma$ is ``non-square'' then by Lemma \ref{l: stabilizer index} we know that $\Gamma_e$ contains some $\gamma'$ with $\det\gamma'$ a non-square, so $\det\gamma$ is non-square. Otherwise we would have $\gamma^n=\gamma'$ for some $n$ and with $\det\gamma\in (\bbF_q^{\times})^2,$ we would have $\det\gamma'$ a square which is a clear contradiction.
			For any even $n$ we have $\det(\gamma^n)$ a square.
			For odd $n,$ since $\det\gamma^n\in \bbF_q^{\times}\setminus(\bbF_q^{\times})^2$ then for any $\alpha'\in \bbF_q^{\times}$ non-square, there is some $\gamma_2\in \Gamma_2$ such that $\displaystyle{\gamma=\begin{psmallmatrix}(\alpha')^{1/n}&0\\0&1\end{psmallmatrix}\gamma_2}.$ However, whether this $(\alpha')^{1/n}$ is a square or not, 
			\[\det(\gamma^n)=\alpha'\det\gamma_2,\]
			which is not a square. Otherwise if $\Gamma$ is ``square,'' by Lemma \ref{l: stabilizer index} we have $\Gamma_e=(\Gamma_2)_e.$  
			Whether $\Gamma$ is square or not, $(\Gamma_e)/\bbF_q^{\times}$ has a nontrivial subgroup isomorphic to $(\Gamma_2)_e/\bbF_q^{\times},$ so the stabilizer of $e$ in $\Gamma_2$ strictly contains $\bbF_q^{\times}$ and $e\in \operatorname{Ell}(\Gamma_2).$ We have shown that $\operatorname{Ell}(\Gamma)\subseteq \operatorname{Ell}(\Gamma_2),$ completing the first part of the proof.\\

			Let $s\in \bbP^1(K).$ We know $\Gamma s\supseteq \Gamma_2 s,$ i.e.\ the action of $\Gamma_2$ partitions $\bbP^1(K)$ more finely than the action of $\Gamma.$ If $s_1,\cdots, s_n$ are cusps of $\Gamma,$ we write $\Gamma\setminus\bbP^1(K)=\Gamma s_1\sqcup \cdots \sqcup\Gamma s_n,$ and then 
			\[\Gamma s_i=\Gamma_2 s_i\sqcup (\Gamma\setminus \Gamma_2)s_i.\]
			If the points of $\bbP^1(K)$ in the orbits $(\Gamma\setminus \Gamma_2)s_i,$ under the action by $\Gamma_2$ have orbit representatives $t_1,\cdots, t_m$ then we can write 
			\[\Gamma_2\setminus\bbP^1(K)=\Gamma_2 s_1\sqcup \cdots \sqcup \Gamma_2 s_n\sqcup \Gamma_2t_1\sqcup\cdots\sqcup\Gamma_2 t_m,\] so the cusps of $\Gamma_2$ are $\cC_{\Gamma_2}=\{s_1,\cdots, s_n,t_1,\cdots, t_m\},$ which contains $\cC_{\Gamma}.$
			
			Finally, as we have made no reference to the particular choice $\Gamma'=\Gamma_2$ in our discussion of cusps, the last part of the proposition follows from this same argument.
\end{proof}

\subsection{Modularity and Series Expansions at Cusps}

Our next steps in the proof of Theorem \ref{thm: decomp of mod forms} deal with the $u$-series expansions of modular forms. 
\begin{proposition}\label{p: generator trick}
Let $f$ be holomorphic on $\Omega$ and at the cusps of $\Gamma_2,$ and let $\beta=\alpha^2\in \bbF_q^{\times},$ where $\alpha$ generates $\bbF_q^{\times}.$ If $f(\gamma z)=(\det\gamma)^{-l}(cz+d)^kf(z)$ for $\displaystyle{\gamma=\begin{psmallmatrix}a&b\\c&d\end{psmallmatrix}\in \Gamma_2},$ where $k/2$ is an integer, then \[f\left(\begin{psmallmatrix}\beta&0\\0&1\end{psmallmatrix}z \right) = f(\beta z)=\beta^{-k/2}f(z).\]
\end{proposition}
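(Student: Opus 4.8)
The plan is to apply the hypothesized transformation law directly to the single matrix $\begin{psmallmatrix}\beta&0\\0&1\end{psmallmatrix}$ and then reconcile the determinant (type) factor that appears with the asserted weight factor $\beta^{-k/2}$. First I would note that, since $\Gamma$ contains the diagonal torus of $\GL_2(A)$ and $\beta=\alpha^2\in(\bbF_q^{\times})^2$, the matrix $\begin{psmallmatrix}\beta&0\\0&1\end{psmallmatrix}$ is a diagonal matrix of square determinant, hence lies in $\Gamma_2$. Its M\"obius action on $\Omega$ is $z\mapsto\beta z$ with trivial automorphy factor $cz+d=1$, so the assumed identity $f(\gamma z)=(\det\gamma)^{-l}(cz+d)^kf(z)$ specializes at once to $f(\beta z)=\beta^{-l}f(z)$.

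It then remains to show $\beta^{-l}=\beta^{-k/2}$ (the case $f=0$ being trivial), and this is where the generator hypothesis on $\alpha$ is used. Writing $\beta=\alpha^2$ and using that $k/2\in\bbZ$, I have $\beta^{-k/2}=\alpha^{-k}$ and $\beta^{-l}=\alpha^{-2l}$, so the desired equality is equivalent to $\alpha^{k-2l}=1$. Because $\alpha$ generates the cyclic group $\bbF_q^{\times}$ of order $q-1$, this holds precisely when $k\equiv 2l\pmod{q-1}$.

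Finally I would supply that congruence. The cleanest self-contained route is to feed the scalar matrix $\begin{psmallmatrix}\alpha&0\\0&\alpha\end{psmallmatrix}$, which has square determinant $\alpha^2$ and therefore also lies in $\Gamma_2$, into the same transformation law: it fixes every $z$ and has automorphy factor $\alpha$, so $f(z)=\alpha^{k}\alpha^{-2l}f(z)=\alpha^{k-2l}f(z)$, forcing $\alpha^{k-2l}=1$ for nonzero $f$, and the generator property upgrades this to $k\equiv 2l\pmod{q-1}$. Alternatively one may simply cite Lemma \ref{l: weight-type} applied to $\Gamma_2$, observing that all scalars $\begin{psmallmatrix}\alpha&0\\0&\alpha\end{psmallmatrix}$ belong to $\Gamma_2$ because their determinants are squares. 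Combining this with the first step yields $f(\beta z)=\beta^{-l}f(z)=\beta^{-k/2}f(z)$, as claimed.

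I do not expect a genuine obstacle here: the entire content is that the membership $\begin{psmallmatrix}\beta&0\\0&1\end{psmallmatrix}\in\Gamma_2$ licenses the $\Gamma_2$-transformation law, and the weight-type congruence converts the type factor $\beta^{-l}$ into the weight factor $\beta^{-k/2}$. The only point that requires care is the exponent bookkeeping, namely that $\beta$ has order $(q-1)/2$ rather than $q-1$; the substitution $\beta=\alpha^2$ is precisely what makes this transparent, since it recasts the comparison as $\alpha^{k-2l}=1$ against the order $q-1$ of the generator $\alpha$.
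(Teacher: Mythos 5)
Your proposal is correct and follows essentially the same route as the paper: both apply the $\Gamma_2$-transformation law to $\begin{psmallmatrix}\beta&0\\0&1\end{psmallmatrix}$ to get $f(\beta z)=\beta^{-l}f(z)$, and both convert $\beta^{-l}$ into $\beta^{-k/2}$ via $\alpha^{k-2l}=1$, obtained either from the scalar matrix $\begin{psmallmatrix}\alpha&0\\0&\alpha\end{psmallmatrix}\in\Gamma_2$ or from Lemma \ref{l: weight-type} (the paper in fact uses both, exactly as you suggest). The only difference is the order of the two steps, which is immaterial.
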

\begin{proof}
Since $\displaystyle{\begin{psmallmatrix}\alpha&0\\0&\alpha\end{psmallmatrix}\in \Gamma_2},$ for $f$ not identically zero we have $\displaystyle{f\left(\frac{\alpha z}{\alpha}\right) = f(z)=\alpha^{-2l}\alpha^k f(z)},$ and so 
$\alpha^{k-2l}=1.$\\

%

By assumption on $\beta,$ we know $\begin{psmallmatrix}\beta&0\\0&1\end{psmallmatrix}\in \Gamma_2$ and therefore $f(\beta z)=\beta^{-l}f(z).$ It suffices to show that $\beta^{-l}=\beta^{-k/2},$ that is, $\alpha^{-k}=\alpha^{-2l}.$ But, by Lemma \ref{l: weight-type}, if $f$ is not identically zero this follows from $k\equiv 2l\pmod{q-1}.$ Note that if $f$ is identically $0,$ the statement of the Proposition is trivial. 
\end{proof}

We complete the proof of Theorem \ref{thm: decomp of mod forms} with the following result.
\begin{proposition}
	Let $q$ be a power of an odd prime. Suppose $\Gamma$ is ``non-square.''
	Let $f$ be a modular form of weight $k$ and type $l$ for $\Gamma_2,$ where $k/2$ is an integer. 
	There are two modular forms $f_1$ and $f_2$ for $\Gamma$ of weight $k$ and types $l_1\equiv k/2\Mod{q-1}$ and $l_2\equiv k/2+(q-1)/2\Mod{q-1}$ respectively, such that $f=f_1+f_2.$
\end{proposition}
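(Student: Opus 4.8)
The plan is to realize $M_{k,l}(\Gamma_2)$ as a representation of the quotient $\Gamma/\Gamma_2\cong\bbZ/2\bbZ$ and split $f$ into the two eigencomponents of a generator, which will be exactly the two forms of types $l_1$ and $l_2$. By Lemma \ref{l: Gamma2 normal, index 2, coset rep} I may take $\sigma=\begin{psmallmatrix}\alpha&0\\0&1\end{psmallmatrix}$ as the nontrivial coset representative, where $\alpha$ generates $\bbF_q^{\times}$; in particular $\alpha$ is a non-square, so $\alpha^{(q-1)/2}=-1$. Since $\sigma\in\Gamma$ (because $\Gamma$ contains the diagonal matrices) and $[\Gamma:\Gamma_2]=2$, we have $\Gamma=\langle\Gamma_2,\sigma\rangle$, so a $\Gamma_2$-modular function of weight $k$ is $\Gamma$-modular of weight $k$ and type $l'$ precisely when it satisfies the one extra relation coming from $\sigma$, namely $f(\sigma z)=(\det\sigma)^{-l'}(0\cdot z+1)^k f(z)=\alpha^{-l'}f(z)$. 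Thus everything reduces to diagonalizing the $\sigma$-action.

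First I would introduce the operator $W$ on $M_{k,l}(\Gamma_2)$ defined by $(Wf)(z)=f(\sigma z)=f(\alpha z)$, and check it preserves $M_{k,l}(\Gamma_2)$: for $\gamma=\begin{psmallmatrix}a&b\\c&d\end{psmallmatrix}\in\Gamma_2$, normality gives $\sigma\gamma\sigma^{-1}=\gamma'\in\Gamma_2$ with $\det\gamma'=\det\gamma$ and lower row $(\alpha^{-1}c,\,d)$, so the automorphy factor of $\gamma'$ evaluated at $\sigma z=\alpha z$ collapses to $cz+d$ and one obtains $(Wf)(\gamma z)=(\det\gamma)^{-l}(cz+d)^k(Wf)(z)$; holomorphy at the cusps of $\Gamma_2$ is preserved because $\sigma$ only permutes the $\Gamma_2$-orbits on $\bbP^1(K)$. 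The crucial input is then $W^2=\alpha^{-k}\cdot\mathrm{id}$: indeed $\beta:=\alpha^2\in(\bbF_q^{\times})^2$, so $\begin{psmallmatrix}\beta&0\\0&1\end{psmallmatrix}\in\Gamma_2$, and Proposition \ref{p: generator trick} gives $(W^2f)(z)=f(\alpha^2 z)=\beta^{-k/2}f(z)=\alpha^{-k}f(z)$.

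With this in hand I would set $f_1=\tfrac12\bigl(f+\alpha^{k/2}Wf\bigr)$ and $f_2=\tfrac12\bigl(f-\alpha^{k/2}Wf\bigr)$; the factor $\tfrac12$ is legitimate since $\operatorname{char}C$ is odd, and $\alpha^{k/2}$ makes sense because $k/2\in\bbZ$. Plainly $f_1+f_2=f$, and a one-line computation using $W^2=\alpha^{-k}$ shows $Wf_1=\alpha^{-k/2}f_1$ and $Wf_2=-\alpha^{-k/2}f_2$. It then remains to match eigenvalues with types: since $l_1\equiv k/2$ we have $\alpha^{-l_1}=\alpha^{-k/2}$, and since $l_2\equiv k/2+(q-1)/2$ together with $\alpha^{(q-1)/2}=-1$ we have $\alpha^{-l_2}=-\alpha^{-k/2}$, so that $f_i(\sigma z)=\alpha^{-l_i}f_i(z)$ is exactly the missing relation and $f_1,f_2$ are modular for $\Gamma$ of weight $k$ and types $l_1,l_2$. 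Holomorphy at the cusps of $\Gamma$ follows from Proposition \ref{p: elliptic points and cusps}, since $\cC_{\Gamma}\subseteq\cC_{\Gamma_2}$; and within $\Gamma_2$ the three labels $l,l_1,l_2$ induce the same transformation law because $2l\equiv 2l_1\equiv 2l_2\equiv k\pmod{q-1}$ by Lemmas \ref{l: weight-type} and \ref{l: elementary number theory}. The hard part here is bookkeeping rather than depth: carefully verifying that $W$ preserves the type in the normality computation, and correctly pairing the two square roots $\pm\alpha^{-k/2}$ of $\alpha^{-k}$ with the types $l_1$ and $l_2$ through the identity $\alpha^{(q-1)/2}=-1$.
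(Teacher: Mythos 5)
Your proof is correct, and it takes a genuinely different route from the paper's. The paper works at the level of $u$-series expansions at $\infty$: it uses Proposition \ref{p: generator trick} to force $a_n=0$ unless $2n\equiv k\pmod{q-1}$, defines $f_1$ and $f_2$ by splitting the series over the two residue classes $n\equiv k/2$ and $n\equiv k/2+(q-1)/2\pmod{q-1}$ from Lemma \ref{l: elementary number theory}, and then verifies the transformation laws term by term, invoking Lemma \ref{l: u-series coeffs determine form} along the way. You instead diagonalize the action of the nontrivial coset of $\Gamma_2$ in $\Gamma$: your operator $W$ preserves $M_{k,l}(\Gamma_2)$ (your normality computation with $\sigma\gamma\sigma^{-1}$ is right), squares to $\alpha^{-k}$ by the same Proposition \ref{p: generator trick}, and your projectors $\tfrac12(f\pm\alpha^{k/2}Wf)$ produce the eigencomponents, whose eigenvalues $\pm\alpha^{-k/2}$ you correctly pair with the types $l_1,l_2$ via $\alpha^{(q-1)/2}=-1$. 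What your approach buys: $f_1,f_2$ are manifestly well-defined elements of $M_{k,l}(\Gamma_2)$, since they are explicit linear combinations inside a $W$-stable space, whereas the paper must argue that each partial $u$-series is again a modular form for $\Gamma_2$ --- a point it treats rather briskly; moreover, your eigenspace/averaging argument is exactly the idempotent decomposition of the group ring that the paper deploys later to prove the generalization Theorem \ref{thm: generalized decomp}, so your proof generalizes with no extra work. What the paper's approach buys: an explicit coefficient-level description of $f_1$ and $f_2$ (which $u$-coefficients of $f$ each inherits), hence uniqueness of the decomposition by Lemma \ref{l: u-series coeffs determine form}. The two constructions agree, since by Lemma \ref{l: u(a/d)=d/au} the operator $W$ multiplies $a_nu^n$ by $\alpha^{-n}$, so your projectors annihilate exactly the complementary residue class. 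One housekeeping remark: your argument (like the paper's) uses that $\Gamma$ contains the diagonal matrices so that $\sigma\in\Gamma$; this is the standing hypothesis of Theorem \ref{thm: decomp of mod forms} even though the proposition's statement does not repeat it, and you were right to flag it explicitly.
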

\begin{proof}
Suppose that $f(\gamma_2 z)=(\det\gamma_2)^{-l}(cz+d)^kf(z)$ for $\displaystyle{\gamma_2=\begin{psmallmatrix}a&b\\c&d\end{psmallmatrix}\in \Gamma_2}.$ 
Write the $u$-series $f(z)=\sum_{n\geq 0}a_nu^n.$ Let $\beta = \alpha^2\in \bbF_q^{\times},$ where $\alpha$ generates $\bbF_q^{\times}.$ By Proposition \ref{p: generator trick}, $f(\beta z)=\beta^{-k/2}f(z).$ Using this relationship, we have from Lemma \ref{l: u(a/d)=d/au}
\[f(\beta z)= \sum_{n\geq 0}a_n\beta^{-n}u^n = \beta^{-k/2}\left(\sum_{n\geq 0}a_nu^n\right), \] so for each non-zero $a_n$ we have $\beta^{-n}=\beta^{-k/2},$ that is, $\alpha^{-2n}=\alpha^{-k}.$ So, $k\equiv 2n\pmod{q-1},$ and by removing the zero summands from the $u$-series and using Lemma \ref{l: elementary number theory}, we may write
\[f(z)=\sum_{n\equiv k/2\Mod{q-1}}a_nu^n+\sum_{n\equiv k/2+(q-1)/2\Mod{q-1}} a_nu^n. \]


Let \[f_1\overset{def}{=}\sum_{n\equiv k/2\Mod{q-1}}a_nu^n\quad \text{ and }\quad f_2\overset{def}{=}\sum_{n\equiv k/2+(q-1)/2\Mod{q-1}} a_nu^n\] be the modular forms for $\Gamma_2$ uniquely determined by their $u$-series by Lemma \ref{l: u-series coeffs determine form}. Note that since $f(z)=(f_1+f_2)(z),$ we see immediately $f_1(z)$ and $f_2(z)$ are indeed modular forms for $\Gamma_2$: \[f(\gamma_2z)=(f_1+f_2)(\gamma_2 z)=(cz+d)^k\det\gamma_2^{-l}(f_1+f_2)(z);\] holomorphy follows since no holomorphic function ($f$ in this case) is the sum of non-holomorphic functions ($f_1$ and $f_2$); holomorphy at $\infty$ follows from definition of $f$.\\

Let $\alpha\in \bbF_q^{\times}$ be some non-square, so by Lemma \ref{l: u(a/d)=d/au} we have $u(\alpha z)=\alpha^{-1}u(z).$ We have \[f_1(\alpha z)=\sum_{n\equiv k/2\Mod{q-1}}a_n\alpha^{-n}u^n=\alpha^{-l_1}\sum_{n\equiv k/2\Mod{q-1}}a_nu^n,\] where $\displaystyle{l_1\equiv \frac{k}{2}\pmod{q-1}}$ by Lemma \ref{l: elementary number theory}. Let $\gamma\in \Gamma\setminus \Gamma_2.$ For any $\alpha\in \bbF_q^{\times}\setminus(\bbF_q^{\times})^2$ there is some $\displaystyle{\gamma_2=\begin{psmallmatrix}a&b\\c&d\end{psmallmatrix}\in \Gamma_2}$ such that \[\gamma=\begin{psmallmatrix}\alpha&0\\0&1\end{psmallmatrix}\gamma_2,\] so 
\[f_1(\gamma z)=f_1(\begin{psmallmatrix}\alpha&0\\0&1\end{psmallmatrix}\gamma_2 z)=\alpha^{-l}f_1(\gamma_2 z)=\alpha^{-l}\det(\gamma_2)^{-l}(cz+d)^kf_1(z)=\det(\gamma)^{-l}(cz+d)^kf_1(z)\] and $f_1$ is a modular form for $\Gamma.$ Likewise we have 
\[f_2(\alpha z)=\sum_{n\equiv k/2+(q-1)/2\Mod{q-1}} a_n\alpha^{-n}u^n=\alpha^{-l_2}\sum_{n\equiv k/2+(q-1)/2\Mod{q-1}} a_nu^n,\]
where now $\displaystyle{l_2\equiv \frac{k+q-1}{2}}\pmod{q-1}$ by Lemma \ref{l: elementary number theory}. So, for $\gamma,\alpha$ and $\gamma_2$ as above,
\[f_2(\gamma z)=\alpha^{-l}\det(\gamma_2)^{-l}(cz+d)^kf_2(z)\] and $f_2$ is a modular form for $\Gamma.$
\end{proof}

\subsection{Generalization}

We will show that Theorem \ref{thm: decomp of mod forms} is actually a special case of the following result.
\begin{theorem}
	\label{thm: generalized decomp}
	Let $q$ be a power of an odd prime. Let $\Gamma\leq \GL_2(A)$ be a congruence subgroup. Let $\Gamma_1=\{\gamma\in \Gamma: \det(\gamma)=1\}.$ Suppose that $\Gamma_1\leq \Gamma'\leq \Gamma$ for some congruence subgroup $\Gamma'.$ As algebras
	\[M(\Gamma)=M(\Gamma'),\] and each component $M_{k,l}(\Gamma')$ is some direct sum of components $M_{k,l'}(\Gamma)$ for some nontrivial $l',$ the distinct solutions to $k\equiv [\Gamma:\Gamma']l'\pmod{q-1},$ where $k/2$ is an integer.
\end{theorem}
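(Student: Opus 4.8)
The plan is to mimic the proof of Theorem \ref{thm: decomp of mod forms} almost verbatim, replacing the index-$2$ subgroup $\Gamma_2\leq\Gamma$ by the general intermediate subgroup $\Gamma'$ with $\Gamma_1\leq\Gamma'\leq\Gamma$, and keeping careful track of the index $d\overset{def}{=}[\Gamma:\Gamma']$. The ring isomorphism $M(\Gamma)=M(\Gamma')$ is (as in the case $d=2$) the inclusion of $\Gamma$-modular forms into $\Gamma'$-modular forms, so the real content is the decomposition of each graded piece $M_{k,l}(\Gamma')$ into $\Gamma$-pieces indexed by the solutions $l'$ of the congruence $k\equiv d\,l'\pmod{q-1}$. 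First I would record the group theory: since $\Gamma_1\leq\Gamma'\leq\Gamma$ and $\det$ has image $(\det\Gamma)\leq\bbF_q^\times$, the determinant induces an isomorphism $\Gamma/\Gamma_1\cong\det\Gamma$ and identifies $\Gamma/\Gamma'$ with a quotient of $\det\Gamma$; in particular $\Gamma/\Gamma'$ is a cyclic group of order $d$ generated by the class of $\begin{psmallmatrix}\alpha&0\\0&1\end{psmallmatrix}$ for a suitable $\alpha\in\bbF_q^\times$, exactly paralleling Lemma \ref{l: Gamma2 normal, index 2, coset rep}. The elementary number theory of Lemma \ref{l: elementary number theory} generalizes to: $k\equiv d\,l'\pmod{q-1}$ has either no solution or exactly $\gcd(d,q-1)$ solutions modulo $q-1$, differing by multiples of $(q-1)/\gcd(d,q-1)$.

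The analytic heart of the argument reuses the $u$-series technique of Proposition \ref{p: generator trick} and the subsequent proposition. Given $f\in M_{k,l}(\Gamma')$ with $u$-expansion $f=\sum_{n\geq0}a_nu^n$, I would let $\beta$ generate the image $\det\Gamma'$ of the determinant on $\Gamma'$ and use that $\begin{psmallmatrix}\beta&0\\0&1\end{psmallmatrix}\in\Gamma'$ to deduce, via Lemma \ref{l: u(a/d)=d/au} and the weight-type relation, that the only surviving coefficients $a_n$ satisfy a fixed congruence on $n$ modulo the order of $\det\Gamma'$ in $\bbF_q^\times$. Grouping the $u$-series into the residue classes of $n$ modulo $q-1$ that solve $k\equiv d\,n\pmod{q-1}$ gives a canonical decomposition $f=\sum_{l'}f_{l'}$, where each $f_{l'}$ collects the monomials $u^n$ with $n\equiv l'$ in the appropriate class. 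By Lemma \ref{l: u-series coeffs determine form} each $f_{l'}$ is a well-defined holomorphic function, manifestly holomorphic at the cusps, and satisfies the $\Gamma'$-transformation law because $f$ does and the grouping is $\Gamma'$-stable.

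The final step upgrades each piece $f_{l'}$ from $\Gamma'$-modularity to genuine $\Gamma$-modularity of type $l'$. For this I would take any $\gamma\in\Gamma$, write $\gamma=\begin{psmallmatrix}\alpha&0\\0&1\end{psmallmatrix}\gamma'$ with $\gamma'\in\Gamma'$ and $\alpha$ in the appropriate coset of $\det\Gamma'$ in $\det\Gamma$, and compute $f_{l'}(\gamma z)$ using Lemma \ref{l: u(a/d)=d/au} for the diagonal factor and the $\Gamma'$-transformation law for $\gamma'$. The point is that by construction of $f_{l'}$ the extra scalar $\alpha^{-n}$ equals $\alpha^{-l'}$ on every surviving monomial, so the combined automorphy factor collapses to $\det(\gamma)^{-l'}(cz+d)^k$, exactly the type-$l'$ transformation law for $\Gamma$. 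Summing over the $\gcd(d,q-1)$ classes yields $M_{k,l}(\Gamma')=\bigoplus_{l'}M_{k,l'}(\Gamma)$, and since these identifications are compatible with multiplication of $u$-series they assemble into the claimed algebra isomorphism.

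I expect the main obstacle to be the bookkeeping when $\det\Gamma\subsetneq\bbF_q^\times$ or when $\gcd(d,q-1)\neq d$: here one must be careful that $l'$ runs over precisely the solutions of $k\equiv d\,l'\pmod{q-1}$ and that the coset representative $\begin{psmallmatrix}\alpha&0\\0&1\end{psmallmatrix}$ exists with the needed determinant, since $\alpha$ need not be an exact power of a fixed generator as it was in the index-$2$ case. Verifying that the scalar $\alpha^{-n}$ is genuinely constant (equal to $\alpha^{-l'}$) across an entire residue class, rather than merely on the single generator $\beta$, is the one place where the general index $d$ demands more than a line-by-line copy of the $d=2$ argument, and I would isolate this as a short lemma about the action of $\det\Gamma$ on $u$-expansions before deducing the theorem.
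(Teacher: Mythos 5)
Your strategy has a genuine gap, and it is exactly the point you defer to a ``short lemma'' at the end: the diagonal matrices your argument needs are not available under the hypotheses of Theorem \ref{thm: generalized decomp}. Unlike Theorem \ref{thm: decomp of mod forms}, this theorem does \emph{not} assume that $\Gamma$ contains the diagonal matrices of $\GL_2(A)$; it assumes only that $\Gamma$ is a congruence subgroup with $\Gamma_1\leq\Gamma'\leq\Gamma.$ Your proof uses diagonal matrices in two essential places: you need $\begin{psmallmatrix}\beta&0\\0&1\end{psmallmatrix}\in\Gamma'$ with $\beta$ generating $\det\Gamma'$ (to constrain the $u$-series coefficients via Lemma \ref{l: u(a/d)=d/au}, as in Proposition \ref{p: generator trick}), and you need every $\gamma\in\Gamma$ to factor as $\begin{psmallmatrix}\alpha&0\\0&1\end{psmallmatrix}\gamma'$ with $\gamma'\in\Gamma'$ (to upgrade each piece $f_{l'}$ to $\Gamma$-modularity). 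Both can fail. For example, with $q=5$ let $\sigma=\begin{psmallmatrix}1&1\\0&3\end{psmallmatrix}$ and $\Gamma=\langle\Gamma(T),\sigma\rangle.$ Then $\Gamma$ is a congruence subgroup with $\det\Gamma=\bbF_5^{\times}$ and $\Gamma_1=\Gamma(T),$ but every element of $\Gamma$ reduces modulo $T$ to a power of $\begin{psmallmatrix}1&1\\0&3\end{psmallmatrix},$ and no nontrivial power of this matrix is diagonal; hence the identity is the only matrix of the form $\begin{psmallmatrix}\alpha&0\\0&1\end{psmallmatrix}$ in $\Gamma,$ and a fortiori in any intermediate $\Gamma'.$ Taking $\Gamma'=\Gamma_2$ (index $2$), neither your generator trick nor your coset factorization can even be set up, so the $u$-series strategy proves only the weaker statement in which the diagonal-matrix hypothesis is re-imposed (under which your argument does go through, since then $\begin{psmallmatrix}\det\gamma&0\\0&1\end{psmallmatrix}^{-1}\gamma\in\Gamma_1\subseteq\Gamma'$ for every $\gamma\in\Gamma$).

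This is precisely why the paper's proof abandons $u$-series manipulations and argues representation-theoretically. For $f\in M_{k,l}(\Gamma')$ and $\gamma\in\Gamma,$ the slash operator $f|_{\gamma}=\det(\gamma)^l(cz+d)^{-k}f(\gamma z)$ again lies in $M_{k,l}(\Gamma')$ (normality of $\Gamma'$ in $\Gamma$ follows from the correspondence, via $\det,$ with subgroups of the abelian group $\det\Gamma$), so the finite cyclic group $\Gamma/\Gamma'\cong\det(\Gamma)/\det(\Gamma'),$ of order $n/n'$ dividing $q-1$ where $n=\#\det\Gamma$ and $n'=\#\det\Gamma',$ acts on $M_{k,l}(\Gamma').$ Decomposing the group ring into idempotents $e_i,$ on which $\gamma$ acts through the characters $\gamma\mapsto(\det\gamma)^{in'},$ gives $M_{k,l}(\Gamma')=\bigoplus_i M_{k,l}(\Gamma')e_i$ with $M_{k,l}(\Gamma')e_i=M_{k,l+in'}(\Gamma),$ and holomorphy at the cusps of $\Gamma$ comes from Proposition \ref{p: elliptic points and cusps}. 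That argument never invokes a diagonal representative, which is what makes the passage from Theorem \ref{thm: decomp of mod forms} to Theorem \ref{thm: generalized decomp} a genuine generalization. To repair your proof, replace ``evaluation at $\begin{psmallmatrix}\beta&0\\0&1\end{psmallmatrix}$'' and the explicit coset computation by the eigenspace decomposition under this slash action of the determinant quotient; at that point you have reconstructed the paper's argument.
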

\begin{remark}
	The subgroups $\Gamma'$ which appear in the statement of Theorem \ref{thm: generalized decomp} may be thought of as the inverse image under $\det:\Gamma\to \bbF_q^{\times}$ of some subgroup of $\bbF_q^{\times}.$ As $\bbF_q^{\times}$ is cyclic, every subgroup $H\leq \bbF_q^{\times}$ is normal, and hence each $\Gamma'=\det^{-1}(H)$ is normal in $\Gamma.$
\end{remark}
\begin{proof}(Theorem \ref{thm: generalized decomp})
	Write $f|_{\gamma}$ for the (Petersson) \textbf{slash operator} of weight $k$ and type $l$ for $\displaystyle{\gamma=\begin{psmallmatrix}a&b\\c&d\end{psmallmatrix}\in \GL_2(K)}$ defined by \[f|_{\gamma}\overset{def}{=}\det(\gamma)^l(cz+d)^{-k}f(\gamma z).\] If $f\in M_{k,l}(\Gamma'),$ by normality $\Gamma'\unlhd \Gamma$ we have that $f|_{\gamma}$ is weakly modular of weight $k$ and type $l$ for $\Gamma,$ that is, for any $\gamma\in \Gamma.$ By Proposition \ref{p: elliptic points and cusps} we see that $f|_{\gamma}$ is holomorphic at the cusps of $\Gamma$ since $f$ is holomorphic at the cusps of $\Gamma',$ indeed we claim the $u$-series of expansions of $f|_{\gamma}$ and $f$ agree at the cusps of $\Gamma'.$\\
	
	We consider the $u$-expansions in a small neighborhood of each cusp. The action of $\Gamma'$ on a neighrborhood of a cusp is trivial, so by the Third Isomorphism Theorem for Groups since $\Gamma_1\leq \Gamma'$ we have an action of the finite group \[\Gamma/\Gamma'\cong\det(\Gamma)/\det(\Gamma'),\] which has order some divisor of $q-1$ since $\{1\}=\det \Gamma_1\leq \det\Gamma'\leq \det\Gamma\leq \bbF_q^{\times}.$ We may describe the group ring $\bbF_q[\Gamma/\Gamma']$ via idempotents as follows. Let $n'\overset{def}{=}\#(\det\Gamma')$ and let $n\overset{def}{=}\#(\det\Gamma).$ This means 
	\[\bbF_q[\Gamma/\Gamma']=\bigoplus_{i=0}^{n/n'-1}\bbF_qe_i,\]
	where $\Gamma$ acts on the $e_i$ via maps $\gamma\mapsto (\det\gamma)^{in'}.$ So as $\Gamma$-modules, we have \[M_{k,l}(\Gamma')=\bigoplus_i M_{k,l}(\Gamma')e_i,\] where \[M_{k,l}(\Gamma')e_i=M_{k,l+in'}(\Gamma).\] Finally, since modular forms for $\Gamma'$ are holomorphic at the cusps of $\Gamma',$ and by Proposition \ref{p: elliptic points and cusps} the cusps of $\Gamma$ are a subset of the cusps of $\Gamma',$ we know $\Gamma'$-modular forms are holomorphic at the cusps of $\Gamma.$
\end{proof}

\begin{remark}
	One can verify that the slash operators $f|_{\gamma}$ are holomorphic at the cusps of $\Gamma$ directly by considering their $u$-series expansions at small neighborhoods of the cusps of $\Gamma.$
\end{remark}

\begin{remark}
	Theorem \ref{thm: decomp of mod forms} is just the special case of Theorem \ref{thm: generalized decomp} when $\Gamma'=\Gamma_2.$ We highlight the special case Theorem \ref{thm: decomp of mod forms} in this article because of its relationship with the other main result Theorem \ref{thm: forms to differentials}.
\end{remark}

\subsection{Summary}
Our first result in this section, Theorem \ref{thm: forms to differentials}, tell us about the geometry of Drinfeld modular forms for congruence subgroups consisting of matrices with square determinants.\\ 

According to whether a given congruence subgroup $\Gamma$ is ``square'' or not, we can decompose the algebra of Drinfeld modular forms for $\Gamma_2$ with Theorem \ref{thm: decomp of mod forms}.
We have shown a modular form $f$ of weight $k$ and type $l$ for $\Gamma_2$ is holomorphic at the cusps of $\Gamma,$ and there are two choices of type $l_1$ and $l_2,$ the lifts of a given $\displaystyle{l\equiv k/2\Mod{\frac{q-1}{2}}}$ to $\bbZ/(q-1)\bbZ$ such that $f$ may be a sum of modular forms of weight $k$ and type either $l_1$ or $l_2$ for $\Gamma,$ if $\Gamma$ is ``non-square.'' Together, these conditions are the definition of a modular form, so every modular form for $\Gamma_2$ is associated to a pair of $\Gamma$ modular forms in this case. Finally, we generalize this decomposition with Theorem \ref{thm: generalized decomp}.

\section{Application: Computing Algebras of Drinfeld Modular Forms}

We have seen some relationships between the modular forms for $\Gamma$ and $\Gamma_2.$ Now we apply the geometry of those modular forms, in particular in terms of stacks. We conclude the article with some examples of how we intend to use geometric invariants to compute algebras of Drinfeld modular forms. Our examples of Drinfeld modular curves have genus $0$ since we need techniques for $\bbQ$-divisors such as in \cite{ODorney-canonical-rings-Q-divisors-on-P1} to compute the canonical ring of a Drinfeld modular curve. In particular, the results in \cite{VZB} do not apply to log curves where the cuspidal divisor $\Delta$ has coefficients besides $1$ or is supported at stacky points. Generalizations of both \cite{VZB} for all divisors with $\bbQ$-coefficients and \cite{ODorney-canonical-rings-Q-divisors-on-P1} to higher genera cases are limited to \cite{LRZ} and \cite{Cerchia-Franklin-ODorney-Qdiv-Ell-curves} and which we do not use here.\\

As a first example, we will recall the computation of canonical rings for classical modular curves in \cite{VZB}, and see how this differs from the case of a Drinfeld modular curve, as we demonstrate that $M((\GL_2(A))_2)=C[g,h].$ Since $\GL_2(A)$ is ``non-square,'' we first reduce to $\GL_2(A)_2$ according to Theorem \ref{thm: decomp of mod forms}. We use the geometry of modular forms for this smaller group, Theorem \ref{thm: forms to differentials}, and geometric invariants of the modular curve for $\GL_2(A)_2$ to compute a canonical ring as in \cite{VZB}.\\

In the classical setting, modular curves are tame stacky curves, and likewise in the Drinfeld setting, as we will explain next. As in \cite[Definition $(3.5)$]{Gekeler-survey-Drinfeld-modular-forms}, for a non-identically-zero Drinfeld modular form $f$ of weight $k$ and type $l$ for $\GL_2(A),$ we let $v_z(f)$ denote the vanishing order of $f$ at $z\in \Omega$ and $v_{\infty}(f)$ denote the vanishing order of $f$ at $\infty.$ From \cite[Equation $(3.10)$]{Gekeler-survey-Drinfeld-modular-forms}, for such an $f$ we have the following \textbf{valence formula}: 
\[\sideset{}{^*}\sum_{z\in GL_2(A)\setminus \Omega}v_z(f)+\frac{v_e(f)}{q+1}+\frac{v_{\infty}(f)}{q-1}=\frac{k}{q^2-1},\]
where $\sum^*$ denotes a sum over non-ellitic classes of $\GL_2(A)\setminus \Omega.$ In particular, the characteristic of $C$ does not divide the degree of the stabilizer of any point on a Drinfeld modular curve, as $\deg G_x$ divides $q^2-1$ for all points $x.$\\

Next we turn to cusps of modular curves, where the classical and Drinfeld settings begin to differ. 
Both in the classical and Drinfeld settings, cusps of a modular curve are stable under M\"obius transformations by diagonal matrices. However, whereas a classical modular curve is a quotient of the upper half-plane $\cH=\{z=a+bi\in \bbC: b>0\}$ by a congruence subgroup of $\SL_2(\bbZ),$ 
so the cusps of a classical modular curve are not stacky points, in the Drinfeld setting the divisor of cusps of a modular curve should be regarded as an effective divisor which is a formal sum of distinct stacky points. Indeed, diagonal matrices in $\GL_2(A)$ have determinants in $\bbF_q^{\times}$ as opposed to determinant $1$ in the classical case of $\SL_2(\bbZ)$ acting on the upper half-plane $\cH$ of $\bbC,$ so a log divisor in the Drinfeld setting may have coefficients besides $1.$\\

We can compute section rings for general $\bbQ$-divisors on genus $0$ curves using \cite{ODorney-canonical-rings-Q-divisors-on-P1}, so 
we will consider the coefficients of log canonical divisors for Drinfeld modular curves in more detail. In particular, we compare the stabilizers of stacky points for $\GL_2(A)$ and $\GL_2(A)_2,$ since we use these to write down log canonical divisors for the stacky curves associated with these groups.\\ 

Recall the parameter at $\infty$ in the Drinfeld setting, introduced in Definition \ref{d: parameter at infty}. Since $\displaystyle{\frac{de_A(z)}{dz}=1},$ we have 
$du = -\overline{\pi}u^2 dz,$ so the differential $dz$ in this case has a double pole at $\infty.$ But, $\infty$ is stabilized by upper triangular matrices in $\GL_2(A).$ As the group of upper triangular matrices is strictly larger than $\displaystyle{\#\left\{\begin{psmallmatrix}\alpha&0\\0&\alpha\end{psmallmatrix}:\alpha\in \bbF_q^{\times} \right\}}=q-1,$ the point $\infty$ is an elliptic point of $\GL_2(A)$ and hence a stacky point for $\sX.$ In fact, both stacky points on $\sX_{\GL_2(A)_2}$, the unique $e$ on $\Omega$ from Definition \ref{d: elliptic pt} and $\infty$ have stabilizers half the order of their stabilizers in $\GL_2(A),$ which comes from the double cover $\Spec C[\tilde{\j}]\to \Spec C[j]$ (see \cite[Page $312$]{Breuer-Gekeler-h-function}) which is ramified above $j=0$ and $\infty$ and the fact that $\GL_2(A)$ is ``non-square'' so that $[\GL_2(A)_e:(\GL_2(A)_2)_e]=2$ by Lemma \ref{l: stabilizer index}.\\


We summarize the above and calculate a log canonical ring in the following example.
\begin{example}
Let $\sX$ be the Drinfeld modular curve with coarse space $X$ whose analytification is $X^{\text{an}}=\GL_2(A)_2\setminus(\Omega\cup\bbP^1(K)).$ This $\sX$ is a stacky $\bbP^1$ with two stacky points: 
\begin{itemize}
	\item a point $P_e$ with a stabilizer of order $\displaystyle{\frac{q+1}{2}}$ corresponding to the unique elliptic point of $\Omega$ (note that $\GL_2(A)$ is ``non-square'')
	\item a cusp, denoted $\infty,$ with a stabilizer of order $\displaystyle{\frac{q-1}{2}}.$
\end{itemize} 

Let \[D=K_{\sX}+2\Delta\sim K_{\bbP^1}+\left(1-\frac{1}{\frac{q+1}{2}} \right)P_e + \left(1+\frac{1}{\frac{q-1}{2}}\right)\infty+2\infty\] be a log stacky canonical divisor on $\sX.$ By \cite[Theorem $6$]{ODorney-canonical-rings-Q-divisors-on-P1} we may construct generators for our canonical ring by using best upper and lower approximations to the coefficients of our log canonical divisor, and laboriously doing so we have \[R_D\cong C[g,h]\cong M(\GL_2(A)_2).\]
\end{example}

\begin{remark}\label{r: how we compute canonical rings in our examples}
	Our example computations of presentations for algebras of Drinfeld modular forms are not the most direct means of obtaining such presentations, nor are our examples new. We simply show a solution to Gekeler's problem by using geometric invariants. We turn the problem of presenting an algebra of modular forms into a study of Riemann-Roch spaces where we find our example generators and relations by considering best approximations to coefficients of a log canonical divisor as in \cite{ODorney-canonical-rings-Q-divisors-on-P1}. We defer a thorough description of the technique to O'Dorney's article and content ourselves with the remarks:
	\begin{enumerate}
		\item we can determine a presentation for the section ring of any $\bbQ$-divisor on a curve of genus $g\leq 1$ using \cite{ODorney-canonical-rings-Q-divisors-on-P1}, its generalization \cite{Cerchia-Franklin-ODorney-Qdiv-Ell-curves}, and \cite{VZB};
		\item constructive theories of log canonical rings for other curves are found in \cite{Landesman-Ruhm-Zhang-Spin-canonical-rings} and especially in \cite{VZB};
		\item the best-approximation technique we use here is laborious but straightforward to use, though for many examples it does not give aesthetic results (\cite{VZB} covers all of the nice cases).  
	\end{enumerate}
\end{remark}

To conclude, we present some new examples of computations of algebras of Drinfeld modular forms. The idea is to illustrate the role of our theory in this calculation, and the limited scope of our results now indicates a clear direction for future work.\\     

As with our previous example, since the existing theory is most developed in genus $0,$ we begin by seeking Drinfeld modular curves in genus $0.$ We know from \cite[Theorem $8.1$]{Gekeler-Invariants} genus formulae for the modular curves associated to $\Gamma(N), \Gamma_1(N)$ and $\Gamma_0(N),$ where we recall \[\Gamma_1(N)=\left\{\left(\begin{array}{cc}1&*\\0&*\end{array}\right)\pmod{N}\right\}\text{ and } \Gamma_0(N)=\left\{\left(\begin{array}{cc}*&*\\0&*\end{array}\right)\pmod{N}\right\}.\] If $\deg N>1,$ then $g(X(N))>0,$ so we consider the case of linear level. Cornelissen has two papers \cite{Cornelissen-lvlT} and \cite{Cornelissen-wt1} dedicated to the Drinfeld modular forms for $\Gamma(\alpha T+\beta),$ for $\alpha,\beta\in \bbF_q$ and $\alpha\neq 0.$ We consider
$M(\Gamma_1(T+\beta)) $ and $M(\Gamma_0(T+\beta))$ to be consistent with our description of monic level.
In fact, we know from \cite[Theorem $4.4$]{Dalal-Kumar-Gamma_0(T)-structure} that for $R$ any ring such that $A\subset R\subset C,$ the $R$-algebra of Drinfeld modular forms $M(\Gamma_0(T))_R$ with coefficients in $R$) is generated by $E_T(z)$ (from Example \ref{example: Eisenstein series for Gamma0(T)}, and the Drinfeld modular forms
\[\Delta_T(z)\overset{def}{=}\frac{g(Tz)-g(z)}{T^q-T} \text{ and }\Delta_W(z)\overset{def}{=}\frac{T^qg(Tz)-Tg(z)}{T^q-T}\]
for $\Gamma_0(T)$ (from \cite[Equation $(4.1)$]{Dalal-Kumar-Gamma_0(T)-structure}). Furthermore, \cite[Theorem $4.4$]{Dalal-Kumar-Gamma_0(T)-structure} tell us that the surjective map 
$R[U,V,Z]\to M(\Gamma_0(T))_R$ defined by $U\to \Delta_W,$ $V\to \Delta_T$ and $Z\to E_T$ induces an isomorphism 
\[R[U,V,Z]/(UV-Z^{q-1})\cong M(\Gamma_0(T))_R.\] Note that from \cite[Proposition $4.3(3)$]{Dalal-Kumar-Gamma_0(T)-structure} we know that $M_{k,l}(\Gamma_0(T))$ has an integral basis, i.e.\ a basis consisting of modular forms with coefficients in $A.$\\

To apply Theorem \ref{thm: forms to differentials} we need the following geometric invariants of 
$\sX(\Gamma_0(\alpha T+\beta))$:
\[\begin{array}{c|c|c}
\text{Genera}&\text{Elliptic points}&\text{Cusps}\\
\hline
\hline
\begin{array}{c}g(\sX(\Gamma_0(\alpha T+\beta)))=0\\\text{(by \cite[Thm $8.1.(\mathrm{iii})$]{Gekeler-Invariants})}\end{array}&\begin{array}{c}\#\operatorname{Ell}(\Gamma_0(\alpha T+\beta))\geq1\\	\text{ramified with index }q+1\text{ over }X(1)\\\text{(\cite[Proposition $7.3$]{Gekeler-Invariants})}\end{array}&\begin{array}{c}\#\cC_{\Gamma_0(\alpha T+\beta)}=2\\\text{(\cite[Proposition $6.7.(\mathrm{i})$]{Gekeler-Invariants})}\end{array}\\
\hline
\hline
g(\sX(\Gamma_0(\alpha T+\beta)_2)) &\begin{array}{c}\#\operatorname{Ell}(\Gamma_0(\alpha T+\beta)_2)=\#\operatorname{Ell}(\Gamma_0(\alpha T+\beta))\\\text{(by Proposition \ref{p: elliptic points and cusps})}\end{array}&\#\cC_{\Gamma_0(\alpha T+\beta)_2}\\
\hline
\end{array}\]

Recall that from \cite[Section $4$]{Dalal-Kumar-Gamma_0(T)-structure} we know the only two cusps of $\Gamma_0(T),$ which we write $0$ and $\infty,$ are exchanged by the matrix 
\[W_T\overset{def}{=}\left(\begin{array}{cc}0&-1\\T&0\end{array}\right).\]

While \cite[Proposition $7.2$]{Gekeler-Invariants} and \cite[Proposition $7.3$]{Gekeler-Invariants} give us some way to compute the number of elliptic points, in particular Gekeler's definition of an elliptic point (\cite[$(3.2)$]{Gekeler-Invariants} - the class of an elliptic point on $\Omega$ in $Y_{\Gamma}$) is slightly different from ours (recall Definition \ref{d: elliptic pt}). For our calculation to work, we must consider all points from $\Omega$ on 
$X_{\Gamma_0(\alpha T+\beta)}$ whose stabilizers under 
$\Gamma_0(\alpha T+\beta)$ 
strictly contain $\bbF_q^{\times}.$ Furthermore, we need to know the order of the stabilizers of each elliptic point for 
$\Gamma_0(\alpha T+\beta)_2,$ which depends on whether the congruence subgroup 
$\Gamma_0(\alpha T+\beta)$ is ``square,'' by Proposition \ref{p: elliptic points and cusps}.\\

\begin{conjecture}	
		Both $\Gamma_1(\alpha T+\beta)$ and $\Gamma_0(\alpha T+\beta)$ may be ``non-square'' congruence subgroups for any choice of $\alpha\neq 0$ and $\beta.$ 
\end{conjecture}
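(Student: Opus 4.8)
The plan is to decide, for each of $\Gamma_0(\alpha T+\beta)$ and $\Gamma_1(\alpha T+\beta)$, whether some elliptic point carries a stabilizer containing a matrix of non-square determinant, which is exactly the non-square condition. First I would reduce to level $N=T$: the substitution $T\mapsto (T-\beta)/\alpha$ is an automorphism of $A$ sending $(\alpha T+\beta)$ to $(T)$, it fixes $\bbF_q$ pointwise and preserves the place $\infty$, hence induces an isomorphism of the associated modular curves that carries $\Gamma_0(\alpha T+\beta)\to\Gamma_0(T)$ and $\Gamma_1(\alpha T+\beta)\to\Gamma_1(T)$ and preserves the determinants of the (constant) stabilizer matrices, so it preserves the square/non-square dichotomy. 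Next I would install a clean arithmetic criterion: for any elliptic $z$ we have $\Gamma_z\leq \GL_2(A)_z\cong\bbF_{q^2}^{\times}$, cyclic of order dividing $q^2-1$, realized as the image of $\bbF_{q^2}^{\times}$ acting by multiplication on $\bbF_{q^2}\cong\bbF_q^2$, so that $\det$ is the norm $\zeta\mapsto\zeta^{q+1}$. Writing $\gamma_0$ for a generator of $\bbF_{q^2}^{\times}$, the element $\gamma_0^{\,j}$ has $\det(\gamma_0^{\,j})=(\gamma_0^{q+1})^{j}$, and since $\gamma_0^{q+1}$ generates $\bbF_q^{\times}$ this is a non-square exactly when $j$ is odd. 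As $\Gamma_z$ is generated by $\gamma_0^{(q^2-1)/|\Gamma_z|}$, I conclude that $\Gamma_z$ contains a non-square-determinant element if and only if $(q^2-1)/|\Gamma_z|$ is odd. For $\Gamma_0(T)$, which contains the center, this reads $|\Gamma_z|=n(q-1)$ with $n\mid q+1$ and the criterion becomes ``$(q+1)/n$ is odd''; a full stabilizer $\Gamma_z=\GL_2(A)_z$ would give the non-square conclusion immediately and feed directly into Lemma \ref{l: stabilizer index} and Proposition \ref{p: elliptic points and cusps}.

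The whole conjecture is thereby reduced to computing the orders $|\Gamma_z|$ of the elliptic stabilizers, equivalently, via the moduli interpretation, to counting the optimal embeddings of the relevant $\bbF_{q^2}[T]$-order into the Eichler order of level $T$ and reading off the determinants of their images. Here is the main obstacle, and it is genuine. Every element of $\GL_2(A)_z\cong\bbF_{q^2}^{\times}$ has finite order, so each elliptic matrix has eigenvalues in $\bbF_{q^2}$ and the CM field is forced to be the \emph{constant} extension $\bbF_{q^2}(T)=\bbF_{q^2}\cdot K$, in which every degree-one prime, in particular $(T)$ and each $(\alpha T+\beta)$, is inert: the residue field $\bbF_q$ acquires a degree-two residue extension. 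This inertness yields a reduction obstruction. For $\gamma\in\Gamma_0(T)$ of finite order the reduction $\bar\gamma \pmod T$ is upper triangular over $\bbF_q$, so its characteristic polynomial splits over $\bbF_q$; but that polynomial has constant $\bbF_q$-coefficients and therefore equals the characteristic polynomial of $\gamma$ itself, forcing the eigenvalues of $\gamma$ into $\bbF_q$ rather than $\bbF_{q^2}\setminus\bbF_q$. Consequently the decisive and hardest step is to settle honestly whether elliptic points in the stabilizer sense of Definition \ref{d: elliptic pt} occur at all for linear level, reconciling Gekeler's count in \cite[Proposition 7.3]{Gekeler-Invariants} (whose notion of elliptic point differs from ours) with this reduction obstruction; only once the stabilizers are shown to exist and their orders pinned down does the norm criterion above deliver a non-square verdict.

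Finally, $\Gamma_1(T)$ demands separate bookkeeping because $\Gamma_1(T)\cap\bbF_q^{\times}=\{I\}$, so the center against which the stacky and elliptic structure is measured is trivial and the criterion of the first paragraph must be recalibrated. I would handle this through the normal inclusion $\Gamma_1(T)\lhd\Gamma_0(T)$ with quotient $\Gamma_0(T)/\Gamma_1(T)\cong\bbF_q^{\times}$, exactly the situation of Theorem \ref{thm: generalized decomp}, transporting the stabilizer and determinant computation obtained for $\Gamma_0(T)$ along this extension and tracking how the determinant map restricts to the elliptic stabilizers inside $\Gamma_1(T)$. I expect the crux throughout to be the inert-prime phenomenon above: it is precisely the case in which the Eichler optimal-embedding count is most delicate, and resolving it is what stands between the clean norm criterion and a proof (or a correction) of the stated conjecture.
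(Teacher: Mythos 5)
Your route is genuinely different from the paper's, and it is the sounder one. The paper does not prove this statement at all (it is left as a conjecture): its supporting evidence is a brute-force search algorithm over matrices $\begin{psmallmatrix}aN+1&b\\cN&d\end{psmallmatrix}$ with $a,b,c,d\in\bbF_q$, plus one worked example. What you label an ``obstacle,'' however, is not an obstacle to be reconciled away --- it is a complete disproof, and you should push it through. Spelled out: a non-central $\gamma\in\GL_2(A)$ fixing some $z\in\Omega$ must have constant trace, since otherwise $\mathrm{tr}(\gamma)^2-4\det(\gamma)$ has even degree $\geq 2$ with square leading coefficient, hence is a square in $K_\infty=\bbF_q(\!(1/T)\!)$ by Hensel's lemma, putting the fixed points of $\gamma$ in $\bbP^1(K_\infty)$ rather than $\Omega$; its characteristic polynomial therefore lies in $\bbF_q[\lambda]$ and must be irreducible over $\bbF_q$ (a constant is a square in $K_\infty$ if and only if it is a square in $\bbF_q$). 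But any $\gamma\in\Gamma_0(\alpha T+\beta)$ reduces to an upper-triangular matrix over $A/(\alpha T+\beta)\cong\bbF_q$, and since reduction fixes constants, that same polynomial splits over $\bbF_q$ --- a contradiction. Hence every element of $\Gamma_0(\alpha T+\beta)$, a fortiori of $\Gamma_1(\alpha T+\beta)$, fixing a point of $\Omega$ is scalar; these groups have no elliptic points in the sense of Definition \ref{d: elliptic pt}, so they are neither ``square'' nor ``non-square,'' and the conjecture as stated is false. The reconciliation with Gekeler that you defer to optimal-embedding counts is actually immediate: Gekeler's elliptic points of $Y_\Gamma$ are the classes of the $\GL_2(A)$-elliptic points of $\Omega$, with no condition on their $\Gamma$-stabilizers (the paper itself flags this discrepancy), and his ramification index $q+1$ in \cite[Proposition $7.3$]{Gekeler-Invariants} corroborates that the $\Gamma_0$-stabilizers there are only $\bbF_q^{\times}$.

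Your analysis also exposes the paper's evidence as erroneous, which is worth making explicit. The example matrix $\gamma=\begin{psmallmatrix}4T+4&1\\5T+2&3\end{psmallmatrix}$ for $q=7$ has trace $4T\notin\bbF_7$, so by the argument above it cannot fix any point of $\Omega$: concretely, the displayed quadratic $z^2+\frac{2T+4}{T+6}z+\frac{4}{T+6}$ has discriminant $\frac{4(T^2+1)}{(T+6)^2}$, and $T^2+1=T^2(1+T^{-2})$ is a square in $K_\infty$, so the polynomial is reducible over $K_\infty$; it is irreducible over $\bbF_7(T)$, which is evidently the field over which the Sagemath check was run --- the wrong field. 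The same defect kills the algorithm's entire search space: constancy of the trace forces $a=0$, then $\det\in\bbF_q^\times$ forces $bc=0$, leaving a triangular matrix with eigenvalues in $\bbF_q$ and fixed points in $\bbP^1(K)$, so no output of that algorithm can ever certify non-squareness. The only genuine flaw in your write-up is the concluding hedge: no delicate Eichler-order bookkeeping, and no separate recalibration for $\Gamma_1$, is needed; your reduction obstruction alone settles the statement, negatively.
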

\begin{remark}
	We see this explictly for sufficiently small $q$ by means of the following algorithm:
	\begin{enumerate}
		\item Fix a level $N=\alpha T+\beta$ for $\alpha\in \bbF_q^{\times}$ and $\beta\in \bbF_q.$
		\item For all $a,b,c,d\in \bbF_q,$ compute the polynomial $(aN+1)d-bcN.$ If this is an element of $\bbF_q^{\times},$ then $\displaystyle{\gamma\overset{def}{=}\left(\begin{array}{cc} aN+1&b\\cN&d\end{array}\right)\in \Gamma_1(\alpha T+\beta)}$ (so $\gamma\in \Gamma_0(\alpha T+\beta)$ as well).
		\item If $c\neq 0$ and the polynomial $\displaystyle{z^2+\frac{d-(aN+1)}{cN}z-\frac{b}{cN}}$ is irreducible over $K_{\infty},$ then we know $\gamma\in \Gamma_i(\alpha T+\beta)_e\setminus \bbF_q^{\times}$ is a non-trivial stabilizer of the elliptic point. 
	\end{enumerate}
	This irreducibility condition follows from the fact that there exists some $z\in \Omega = C-K_{\infty}$ such that we have 
	\[\frac{az+b}{cz+d}=z \iff \begin{cases}
	a=d \text{ and } b=0=c, &\text{ or }\\
	cz^2+(d-a)z-b=0 & \text{ is irreducible over }K_{\infty},
	\end{cases}\] since if this polynomial in $A[z]$ had a solution, it would be an element of $K,$ which is a contradiction to our definition of $z.$\\
	
	As long as $q\leq 25,$ it suffices to check only the constants in $A$ by brute force to see that we can not only find such irreducible polynomials, but we can do so in such a way that every element of $\bbF_q^{\times}$ is the determinant of some matrix in the stabilizer $\Gamma_1(\alpha T+\beta)_e$ and hence also $\Gamma_0(\alpha T+\beta)_e.$ Therefore, it seems plausible that we will be able to find stabilizing matrices with non-square determinants for any odd $q.$  
\end{remark}
\begin{example}
	For completeness, we illustrate a non-trivial, non-square matrix in $\Gamma_1(4T+3)_e$ in the case $q=7.$ Consider $\displaystyle{\gamma=\left(\begin{array}{cc}4T+4&1\\5T+2&3\end{array}\right)}.$ Since $4T+4\equiv 1\pmod{4T+3}$ and $5T+2\equiv 0\pmod{4T+3}$ (since $5T+2=3(4T+3)$) and Sagemath tells us the corresponding polynomial $\displaystyle{z^2+\frac{2T+4}{T+6}z+\frac{4}{T+6}}$ is irreducible. Note that $\det\gamma = 3,$ which is not a square in $\bbF_7.$
\end{example}


Gekeler explains multiple techniques for computing the genus of a Drinfeld modular curve in \cite[Section $8$]{Gekeler-Invariants}, but to avoid repeating too much of his notation, we describe just two:
\begin{enumerate}
	\item Compute fibers of the ramified graph coverings $\Gamma\setminus \sT\to GL_2(A)\setminus \sT,$ where $\sT$ is the Bruhat-Tits tree as in \cite{Gekeler-Nonnengardt-BruhatTitsTrees} and \cite{Franklin-Ho-Papikian-DrinfeldCurves-SL}
	
	\item Riemann-Hurwitz formula and \cite[Proposition $8.3$]{Gekeler-Invariants}.
\end{enumerate}
The content of \cite[Proposition $8.3$]{Gekeler-Invariants} is that canonical coverings of Drinfeld modular curves have the least cuspidal ramification allowed by the group structure of the stabilizers of cusps and the only ramification possible is at elliptic points or cusps. 
This theory applies to the covers \[\sX(\Gamma_1(\alpha T+\beta)_2))\to \sX(\Gamma_1(\alpha T+\beta))\text{ and }\sX(\Gamma_0(\alpha T+\beta)_2)\to \sX(\Gamma_0(\alpha T+\beta)),\] which are canonical in the sense that by the universal property of pull-backs there are maps 
\[\psi_i:\sX(\Gamma_i(\alpha T+\beta)_2)\to 
\sX(\Gamma_i(\alpha T+\beta))\times_{\sX(\GL_2(A))}\sX(\GL_2(A)_2),\]
for $i=0, 1$ and if we compose $\psi_i$ with the canonical projection from the fiber-product onto $\sX(\Gamma_i(\alpha T+\beta)),$ we have a cover. Finally, all of the cusps of $X(N)$ are $\Gal(X(N)/\GL_2(A))$-conjugate, so if we consider $x=\infty$ in particular, and denote its stabilizer \[G_{\infty}=\Gamma_i(\alpha T+\beta)_{\infty}/\bbF_q^{\times},\] the first ramification group $G_{\infty, 1}$ is its $p$-Sylow subgroup $U_i(\alpha T+\beta)\cdot \bbF_q^{\times}/\bbF_q^{\times},$ where \[U_i(\alpha T+\beta)=\left\{\left(\begin{array}{cc}1&b\\0&1\end{array}\right)\in \Gamma_i(\alpha T+\beta)\right\}.\]

%
%

We conclude with one final example, where we will use $M(\Gamma_0(T))=C[U,V,Z]/(UV-Z^2)$ from \cite[Theorem $4.4$]{Dalal-Kumar-Gamma_0(T)-structure} to make sure that the log stacky canonical ring of the corresponding Drinfeld modular curve $\sX_{\Gamma_0(T)_2}$ does in fact compute this algebra of Drinfeld modular forms for $\Gamma_0(T)_2.$ We explicitly use \cite[Theorem $6$]{ODorney-canonical-rings-Q-divisors-on-P1}, demonstrating the best approximation technique discuseed in Remark \ref{r: how we compute canonical rings in our examples}.
\begin{example}
	Since $UV-Z^2$ describes a conic, we know that the curve $C[U,V,Z]/(UV-Z^2)\subset\bbP^2_C$ is rational, and all rational curves have genus $0.$ There are $2$ cusps, say $0$ and $\infty$ for $\sX_{\Gamma_0(T)}$ so there are at least the same cusps on $\sX_{\Gamma_0(T)_2}$ and hence there are $2$ elliptic points.\\ 
	
	Let $\overline{\Gamma_0(T)_2}$ denote the image of $\Gamma_0(T)_2$ in $\GL_2(A/T)\cong \GL_2(\bbF_q).$ As in \cite[Section $3$]{Gekeler-Invariants}, let $(A/T)^2_{\text{prim}}$ denote the primitive vectors in $A/T\times A/T,$ i.e.\ those vectors which span a non-zero direct summand. From \cite[Section $3$]{Gekeler-Invariants} we know 
	\[\{\text{cusps of }X_{\Gamma_0(T)_2}\}\cong \overline{\Gamma_0(T)_2}\setminus (A/T)^2_{\text{prim}}/\bbF_q^{\times},\] so the cusps of $X_{\Gamma_0(T)_2}$ are precisely the $\Gamma_0(T)_2$-orbits of $0$ and $\infty$ which correspond to the primitve vectors $(1,0)$ and $(0,1).$ So, there are exactly these two cusps and no further elliptic points. Let $\alpha\in \bbQ$ be such that \[\frac{2k-2l-kq}{k(q-1)}\leq \alpha<\frac{2k-2l-kq}{k(q-1)}+1\] and the number $r$ of best lower approximations to $\alpha$ with denominator strictly greater than $1$ is $r=2.$ Let
	\begin{align*}
		D\overset{def}{=}K_{\sX_{\Gamma_0(T)_2}}+2\Delta&\sim K_{\bbP^1}+\alpha(0)+\alpha(\infty)+2(0+\infty)\\
		&=\alpha(\infty)+(\alpha+2)(0),
	\end{align*}
	since $K_{\bbP^1}= -2\infty.$ We see that 
	\begin{align*}
		h^0\left(\frac{k}{2}D\right)&=2\Big\lfloor\frac{k}{2}(\alpha)\Big\rfloor+k+1\\
		&=k\left(\frac{2k-2l-kq}{k(q-1)}\right)+k+1\\
		&=1+\frac{k-2l}{q-1}\\
		&=\dim_C(M_{k,l}(\Gamma_0(T))),
	\end{align*}
	where we know this dimension from \cite[Proposition $4.1$]{Dalal-Kumar-Gamma_0(T)-structure}.\\
	
	Finally, we see from \cite[Theorem $6$]{ODorney-canonical-rings-Q-divisors-on-P1} that the canonical ring $R_D,$ i.e.\ the log stacky canonical ring for $\sX_{\Gamma_0(T)_2},$ is generated by $3$ functions: $\Delta_T,$ $\Delta_W$ and $E_T,$ corresponding to $U,V$ and $Z$ respectively, and has a single relation $UV-Z^2.$ 
\end{example}

\newpage
\bibliographystyle{amsalpha} 								
\bibliography{bibliography} 						
\end{document}